\documentclass[11pt]{article}

\usepackage{amsmath}
\usepackage{esint}

   \def\XXint#1#2#3{{\setbox0=\hbox{$#1{#2#3}{\int}$}
        \vcenter{\hbox{$#2#3$}}\kern-.5\wd0}}

\usepackage{graphicx} 
\usepackage{mathtools}
\usepackage{amsthm}
\usepackage{amsfonts}
\usepackage{amssymb}
\usepackage{mathrsfs}
\usepackage{tikz}
\usepackage{subcaption}
\usepackage{placeins}

\usepackage{hyperref} 
\hypersetup{hidelinks}
\usepackage{microtype}
\usepackage{needspace}
\newtheorem{lemma}{Lemma}[section]
\newtheorem{definition}{Definition}[section]
\newtheorem{theorem}{Theorem}[section]
\newtheorem{assumption}[theorem]{Assumption}

\newtheorem{rmk}[theorem]{Remark}
\newtheorem{prop}[theorem]{Proposition}
\newtheorem{cor}[theorem]{Corollary}
\usepackage{tikz-cd}
\numberwithin{equation}{section}

\newcommand{\bJ}{\mathcal J}

\usepackage{tipa}

\usepackage[numbers]{natbib}
\usepackage{fullpage}
\usepackage{authblk}
\usepackage[titletoc,title]{appendix}

\allowdisplaybreaks
\begin{document}
\title{\textsc{Norm-resolvent asymptotics for the Neumann Laplacian on periodic resonant thin structures}}

\author[1]{Kirill Cherednichenko}
\author[1]{Chuanjie Wu}
\affil[1]{Department of Mathematical Sciences, University of Bath, Claverton Down, Bath,\qquad\qquad  BA2 7AY, United Kingdom}
\date{October 11, 2026}
\maketitle


\begin{abstract}
We consider the Neumann Laplacian on a three-dimensional periodic network
of thin rods with edge thickness of order $h$ and vertex diameter of order
$h^{2/3}$. We extend the fixed-graph resonant reduction to obtain a
finite-$h$ periodic graph approximation with generalised norm-resolvent
error $O(h)$, uniformly in the Floquet quasi-momentum. An exact gauge
transformation identifies the phase-corrected graph fibres with the
canonical lattice Floquet fibres, giving the same rate for the global
operators. Collapsing the contact points while retaining the ambient
edge tangents gives an approximation with error $O(h^{2/3})$.
The proof uses a Ryzhov boundary triple and a Schur--Feshbach reduction.
The estimates yield quantitative approximation of Bloch bands and
stability of open gaps on bounded energy intervals.
For the simple-cubic lattice, we refine the known resonant dispersion
calculation to obtain a complete band-gap decomposition and $O(h)$
control of the persistence of each fixed finite collection of gaps
in the thin structure.
\end{abstract}

\noindent\textbf{Keywords:} Neumann Laplacian; periodic thin structures; quantum graphs; resonant scaling; norm-resolvent asymptotics; Floquet--Bloch theory; spectral gaps.


\section*{Introduction}
\addcontentsline{toc}{section}{Introduction}

Metric graphs provide a natural effective description of wave propagation in
networks whose transverse dimensions are small compared with their
longitudinal scale.  The resulting one-dimensional operators, usually called
quantum-graph Hamiltonians, have a well-developed spectral theory; see, for
example, \cite{kuchment2004quantum,berkolaiko2013introduction}.  A more
delicate question concerns the conditions under which these operators arise
from multidimensional boundary-value problems on thin domains, and the sense
in which the approximation holds. For Neumann problems the limiting edge
expression is readily identified, whereas the vertex condition depends on the
relative amounts of mass and energy stored in the shrinking junction and the
adjacent channels.

Early results on thin-domain convergence include those of Rubinstein and
Schatzman \cite{rubinstein2001variational} and Kuchment and Zeng
\cite{kuchment2001convergence}, who established spectral convergence for
Neumann networks collapsing onto graphs.  Exner and Post
\cite{exner2005convergence} clarified the role of the vertex scaling in
graph-like manifolds. Their analysis distinguishes the Kirchhoff and decoupled
regimes from the borderline regime, in which the limiting operator acts on an
enlarged space and retains the vertex mass. In three dimensions an edge of
length of order one and thickness $h$ has volume of order $h^2$;
requiring the vertex volume to have the same order gives a vertex diameter of
order $h^{2/3}$. This is the resonant, or borderline, regime considered
here.

Exner and Post also treat periodic graph-like manifolds:
\cite[Theorem~9.2]{exner2005convergence} gives eigenvalue convergence
uniformly in the Floquet parameter and persistence of any fixed finite
collection of open gaps. Norm-resolvent convergence for quasi-one-dimensional
spaces, including non-compact settings under suitable uniformity assumptions,
was developed by Post \cite{post2006spectral}; see also
\cite{post2012spectral,post2021graphlike}. Grieser's survey
\cite{grieser2008thin} gives an overview of thin tubes, spectral geometry and
graph limits.

Our norm-resolvent analysis builds on the quantitative resonant reduction
of Cherednichenko, Ershova and Kiselev
\cite{cherednichenko2024neumann}.  For a thin domain collapsing to a fixed
metric graph they prove an order-sharp generalised norm-resolvent estimate; in
dimensions $d\geq3$ the error is of order $\varepsilon$, where
$\varepsilon$ is the edge thickness.  Their effective self-adjoint operator
acts in the enlarged space $L^2(G)\oplus\mathbb C^N$, where the finite-dimensional
component represents the limiting vertex mass.  After a unitary gauge, the
resulting matching law is closely related to a $\delta'$-type condition.
We retain this resonant mechanism and the local order of approximation.

To obtain the $O(h)$ norm-resolvent estimate in the periodic setting, we
control the constants over the whole Brillouin zone, retain the two incidence
phases of each geometric edge, and construct measurable fibre operators and
identification maps for the direct-integral reconstruction. The phase
analysis is needed because the magnetic rigid vertex mode has different
constant ground-mode coefficients on the distinct contact plates after
removal of the transverse magnetic phase. This gives the same
$O(h)$ rate in the global estimate and in the resulting approximation of
Bloch bands and gaps.

We treat the resonant reduction and the choice of graph gauge separately.
For each $\theta$ we first construct a phase-corrected graph operator
$\mathcal A^{\mathrm{ph}}_{\theta,h}$.  If the incidences of an oriented
edge $e$ are denoted by $(e,-)$ and $(e,+)$, its endpoint values are $\alpha^h_{e,-}c,$ $\alpha^h_{e,+}c,$ $\alpha^h_{e,\sigma}=e^{-i\theta\cdot x^h_{e,\sigma}},$ where $x^h_{e,\sigma}$ is the centre of the corresponding contact plate in
the local chart of the quotient vertex.  We prove, uniformly in $\theta$,
that the physical fibre resolvent is approximated by the lifted resolvent of
$\mathcal A^{\mathrm{ph}}_{\theta,h}$ with error $O(h)$.

Let
$n_e\in\mathbb Z^3$ be the lattice shift of the edge and let $l_e$ be its
finite-thickness centre-line length.  An affine edge gauge replaces the two
endpoint factors by the constant magnetic potential
\[
   \vartheta_{e,h}(\theta)=\frac{\theta\cdot n_e}{l_e}.
\]
Thus $\mathcal A^{\mathrm{ph}}_{\theta,h}$ is unitarily equivalent to the
standard Floquet fibre $\mathcal A^{\mathrm{lat}}_{\theta,h}$ of the
periodic metric graph: its total magnetic phase along the edge is precisely
$\theta\cdot n_e$.  The same gauge is used in the identification map and preserves the
$O(h)$ error in the global reconstruction.

A second comparison retains the ambient finite-thickness tangent in the
collapsed graph.  We denote the
resulting fibre by $\mathcal A^{\mathrm{geo}}_{\theta,h}$.  Its endpoint
values are common, but its magnetic derivative contains
$\theta\cdot t_e$, where $t_e$ is the physical edge tangent.  The
centre-line identity $n_e+x^h_{e,+}-x^h_{e,-}=l_et_e$ shows that
\[
   \frac{\theta\cdot n_e}{l_e}-\theta\cdot t_e
   =-\frac{\theta\cdot(x^h_{e,+}-x^h_{e,-})}{l_e}
   =O(h^{2/3})
\]
uniformly in $\theta$.  Hence the lattice and ambient-geometric graph
resolvents differ by $O(h^{2/3})$.  This is the error introduced by the additional geometric identification.
At $\theta=0$ all three graph fibres coincide and the physical comparison
retains the $O(h)$ scale.

Related work on periodic graph and waveguide models considers a different
order of limiting operations.  Kiselev and
Ryadovkin \cite{kiselev2024phase} study an $\varepsilon$-periodic tubular
structure already modelled by a magnetic metric graph and obtain graph-level
norm-resolvent homogenisation, including a magnetic phase transition.  Their
starting point is the graph operator, whereas the present paper first derives,
uniformly in quasi-momentum, the graph family from a three-dimensional
resonant thin structure.  In a different thin periodic regime, Delourme and
Fliss derive Dirac points, guided modes and edge states for a hexagonal
periodic graph-like domain by comparison with a honeycomb quantum graph
\cite{delourme2024guided}.  Thin Neumann waveguides with transversal windows
and related singular geometries give another class of dimension-reduction
problems; see, for example, \cite{cardone2019delta}.
Their scaling and effective mechanisms differ from the fixed-period,
vanishing-thickness geometry considered here.

Another approach derives vertex conditions from scattering and threshold
behaviour. Molchanov and Vainberg derive graph limits for networks
of thin fibres through junction scattering data and analyse the spectrum near
the transverse threshold
\cite{molchanov2007scattering,molchanov2008threshold}.
A systematic treatment of the corresponding spectral and resolvent
asymptotics is given in \cite{molchanov2010propagation}.
They also study periodic necklace networks, deriving dispersion relations
and analysing band gaps, reflection and transmission
\cite{molchanov2010periodic}.
In these two works the junction diameter is of the same order as the
fibre thickness, whereas here the larger junctions retain a non-vanishing
vertex mass in the limit.
For Dirichlet waveguides, non-trivial coupling is tied to threshold resonances;
representative results include \cite{post2005branched,dellantonio2010effective}.
Related mechanisms occur for Robin waveguides \cite{cacciapuoti2010robin}.
In these problems the effective vertex law reflects the spectral response
of the junction.

We use the Dirichlet-to-Neumann, or Weyl, function to analyse the transmission
problem. Boundary triples and Weyl functions reduce such a problem to an
operator equation on the interface; see
\cite{derkach1991generalized,ryzhov2007general}.  The asymptotic separation of
a low-dimensional resonant channel from a stiff complement underlies the
operator-theoretic approach to critical-contrast and resonant media in
\cite{cherednichenko2020effective,cherednichenko2023operatorfamilies,cherednichenko2025effectiveII}.
In the present problem the contrast is geometric: the vertex
Dirichlet-to-Neumann operator has
one zero Steklov mode, corresponding to the magnetic rigid mode, while its
remaining spectrum is separated from zero by a gap bounded below by $ch^{-1}$.

The reduction requires uniform bounds for the two Weyl cross-blocks
containing the retained channel. For straight cylindrical edges these follow
from transverse-mode invariance and the bounded endpoint
Dirichlet-to-Neumann matrix on the ground-mode sector. The full edge
Dirichlet-to-Neumann operator remains unbounded, so the Schur--Feshbach step
is carried out with the high-block domain made explicit.

The geometric and mixed-regularity assumptions are collected in
Assumption~\ref{ass:standing-geometry}. We work with one vertex orbit per
cell and finitely many straight cylindrical edge components. Their lengths
and directions may depend on $h$. Several vertex orbits would lead to a
matrix-valued retained component and are not considered here.

The main estimates are as follows.  For a compact set
$K\Subset\mathbb C\setminus\mathbb R$, we construct self-adjoint
lattice and ambient-geometric graph fibres and identification operators such
that, uniformly in $\theta\in Q'$ and $z\in K$,
\[
 \left\Vert(\mathcal A_\theta^h-z)^{-1}
 -\Phi^{\mathrm{lat}}_{\theta,h}
  (\mathcal A^{\mathrm{lat}}_{\theta,h}-z)^{-1}
  (\Phi^{\mathrm{lat}}_{\theta,h})^*\right\Vert
 \leq Ch,\quad\ 
 \left\Vert(\mathcal A_\theta^h-z)^{-1}
 -\Phi_{\theta,h}
  (\mathcal A^{\mathrm{geo}}_{\theta,h}-z)^{-1}
  \Phi_{\theta,h}^*\right\Vert
 \leq Ch^{2/3}.
\]
Both estimates extend to the global operators. The first also yields $O(h)$
approximation of fibre spectral sets and periodic band spectra on bounded energy
intervals, as well as stability of open gaps; the geometric comparison has
the corresponding $O(h^{2/3})$ control.

The final section specialises the resonant Cayley-graph calculation of
Exner and Post \cite[Section~9.7 and Theorem~9.6]{exner2005convergence} to
the simple-cubic lattice. Starting from their dispersion relation and infinite-gap result, we give the
complete band-gap decomposition,
including a scalar equation for each upper band endpoint and monotonicity of
the gaps with respect to the vertex mass. The uniform norm-resolvent estimate
then gives $O(h)$ control of the persistence of each fixed finite
collection of gaps in the thin structure.

The paper is organised as follows.  Section~1 specifies the periodic geometry
and the incidence convention.  Section~2 introduces the Floquet--Gelfand
transform and the fibre transmission problem.  Section~3 constructs the
Ryzhov triple and the Kre\u{\i}n resolvent formula.  Section~4 establishes the
mixed Dirichlet estimate, the uniform Poisson-lift bound and the
$h^{-1}$ Steklov gap.  Section~5 performs the Weyl reduction, constructs the
reduced thin operator, identifies the phase-corrected, lattice and
ambient-geometric graph fibres, proves the two-stage resolvent estimates, and
derives the band and gap consequences before reconstructing the global
operators. Section~6 gives the simple-cubic band-gap decomposition and
applies the quantitative gap-stability result.

\section{Model setup on a periodic thin structure}

\begin{figure}
    \centering
    \includegraphics[width=0.49\linewidth]{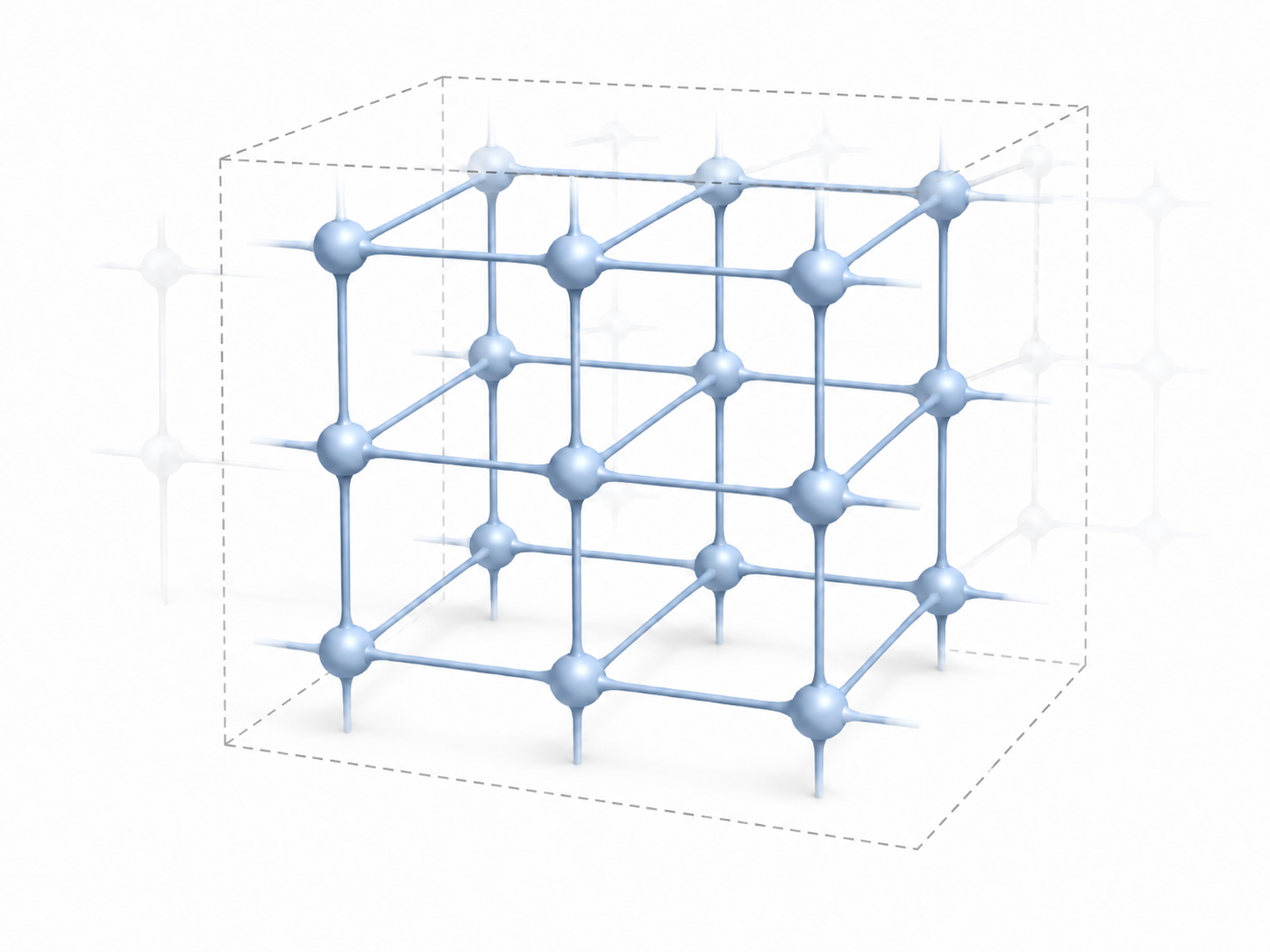}
    \includegraphics[width=0.39\linewidth]{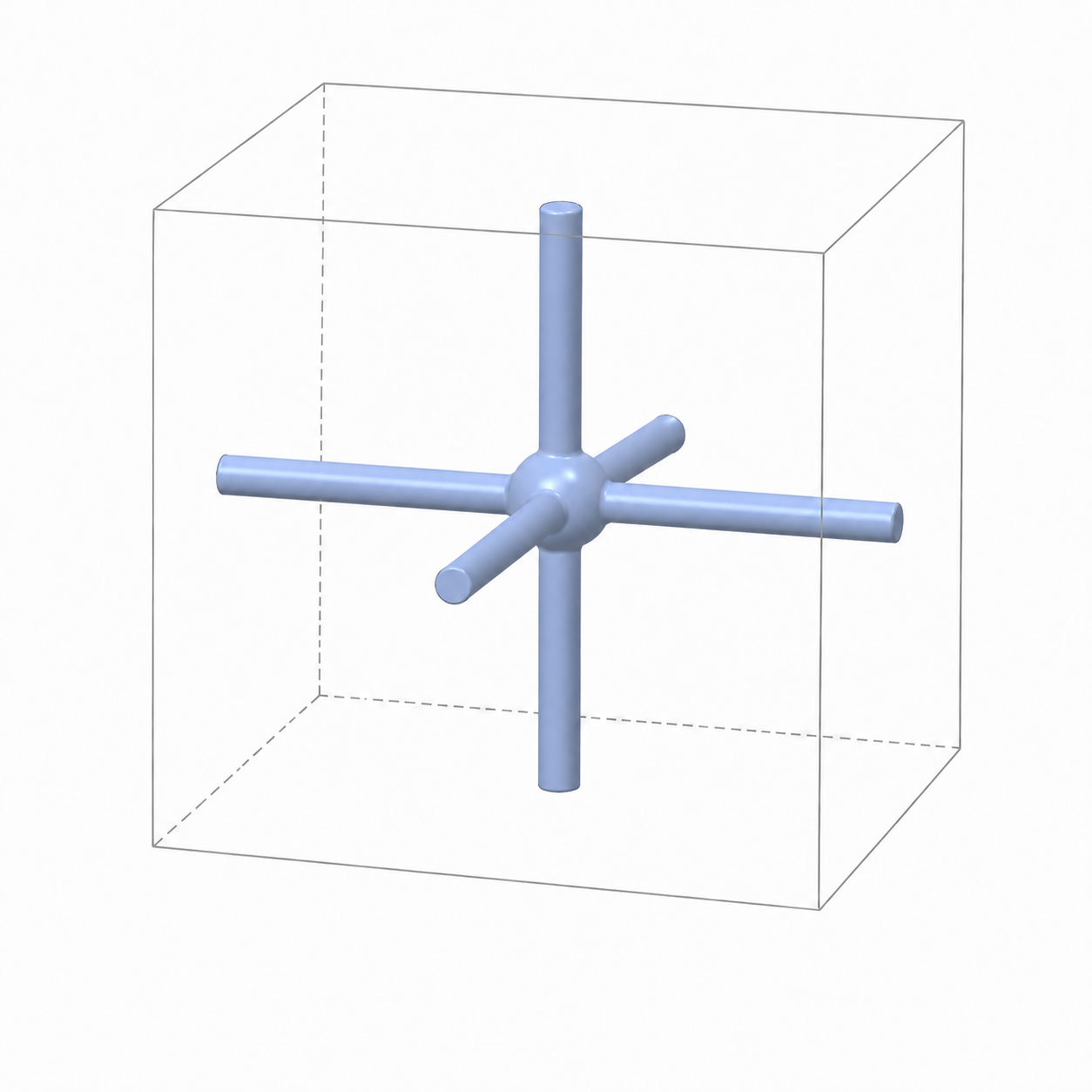}
    \caption{Three-dimensional periodic thin structure in the resonant scaling regime. 
    The edge thickness is of order $h$, whereas the diameter of each vertex neighbourhood is of order $h^{2/3}$. 
    The edge and vertex components are denoted by
    $X_e^h$ and $X_v^h$, respectively. 
    The left panel shows a portion of the periodic network, while the right panel shows
    $X_{\mathrm{cell}}^h$, the part of the thin structure contained in one unit cell.}
    \label{fig:periodic-thin-structure}
\end{figure}

We consider a three-dimensional periodic thin structure
$X^h\subset\mathbb R^3$, obtained by thickening a periodic graph. As
illustrated in Figure~\ref{fig:periodic-thin-structure}, the structure consists
of thin cylindrical edge regions $\{X_e^h\}_{e\in E}$ connected through small
three-dimensional vertex regions $\{X_v^h\}_{v\in V}$. We write
\[
    E=\{e_i\}_{i\in I},
    \qquad\quad
    V=\{v_j\}_{j\in J},
\qquad \quad 
    X^h
    =
    \operatorname{int}_{\mathbb R^3}
    \left(
    \bigcup_{e\in E}\overline{X_e^h}
    \cup
    \bigcup_{v\in V}\overline{X_v^h}
    \right).
\]
Subsequent vertex--edge decompositions are understood in this glued sense,
also in the periodic quotient.
Here $E$ and $V$ denote the sets of edges and vertices of the underlying
periodic graph. These index sets are infinite, with finitely many representatives in a
single periodicity cell. The edge regions have cross-sectional diameter of order
$h$, whereas the vertex regions have diameter of order $h^{2/3}$.

We assume throughout that the domains $X_e^h$, $X_v^h$,
and the full thin structure $X^h$ are Lipschitz, and that the rescaled
reference geometries have uniformly controlled Lipschitz character.  At each
edge--vertex connection we impose a full uniform Zaremba regularity
hypothesis.  If $D_{v,e}^h:=\partial X_v^h\cap\partial X_e^h$
denotes the contact plate between a vertex $v$ and an incident edge
$e$, then separating the vertex and edge parts at $D_{v,e}^h$
produces a local mixed problem with Dirichlet data on the contact plate and
Neumann data on the remaining physical boundary.  We assume that there are
$s_{\mathrm Z}\in(0,1/2)$ and $C_{\mathrm Z}>0$, independent of $h$,
such that, after rescaling every local chart to unit size, every function
$w\in H^1(U')$ with $\Delta w\in L^2(U')$, satisfying the
homogeneous local Dirichlet and Neumann conditions in the weak trace and
conormal sense, obeys
\begin{equation}
\label{eq:standing-zaremba-estimate}
 \Vert\chi w\Vert_{H^{3/2+s_{\mathrm Z}}(U)}
 \leq
 C_{\mathrm Z}
 \bigl(
   \Vert\Delta w\Vert_{L^2(U')}
   +\Vert w\Vert_{H^1(U')}
 \bigr)
\end{equation}
for cutoffs $\chi$ supported in $U'$ and equal to one on
$U\Subset U'$.  After the local magnetic gauge, the lower-order terms
produced by the cutoff are absorbed into the $H^1(U')$-term.  The estimate is
required in charts centred on smooth Dirichlet--Neumann junction curves and
also at every edge, corner or conical point at which the local junction
geometry changes.  In a smooth dihedral chart it follows, for example, when
the first positive mixed-boundary singular exponent is bounded below by
$1/2+s_{\mathrm Z}+\eta$ for some uniform $\eta>0$; in the presence
of corner or conical points the
corresponding operator-pencil bounds are included in the assumption.  We use
this regularity assumption to obtain the uniform conormal-trace and lifting
estimates in Subsection~4.2.

\begin{rmk}[Geometry of the rescaled vertex]\label{rmk:rescaled-vertex-geometry}
More precisely, for each vertex $v\in V$ we assume the existence of a bounded
reference domain $X_v^0\subset\mathbb R^3$, a translation vector
$b_v^h\in\mathbb R^3$, and a homeomorphism $\mathcal T_v^h:
    \overline{X_v^0}
    \to
    \overline{\mathcal T_v^h(X_v^0)}$ such that $X_v^h
    =
    b_v^h+h^{2/3}\mathcal T_v^h(X_v^0).$
The map $\mathcal T_v^h$ describes the deformation near the contact
plates. After vertex rescaling, the domain has size of order one and each
contact plate has diameter of order $\varepsilon_h=h^{1/3}$.

We assume that $\mathcal T_v^h$ is volume-preserving, so that
$|X_v^h|=h^2|X_v^0|$, and uniformly bi-Lipschitz: there exists a constant
$C>0$, independent of $h$, such that
\[
    C^{-1}|x-y|
    \le
    |\mathcal T_v^h(x)-\mathcal T_v^h(y)|
    \le
    C|x-y|,
    \qquad x,y\in \overline{X_v^0}.
\]
These deformations are required to preserve
\eqref{eq:standing-zaremba-estimate}, as specified in
Assumption~\ref{ass:standing-geometry} below.
\end{rmk}

Let $Q:=[-1/2,1/2)^3$ be the periodicity cell, and let $X_{\mathrm{cell}}^h\subset Q$ denote the
part of the thin structure contained in one cell. We assume that
$X_{\mathrm{cell}}^h$ consists of one vertex region together with finitely
many thin edge regions incident to it. Some of these edge regions meet the
boundary of $Q$. The full structure is obtained by periodic continuation:
\[
    X^h
    :=
    \bigcup_{n\in\mathbb Z^3}
    \bigl(X_{\mathrm{cell}}^h+n\bigr).
\]
Throughout the paper we assume that $X^h$ is connected.

We use the following convention for the edge geometry.  Let
$\mathscr E_{\mathrm{cell}}$ contain one chosen orientation of each
geometric edge orbit in the quotient cell.  For each
$e\in\mathscr E_{\mathrm{cell}}$, let
$n_e\in\mathbb Z^3\setminus\{0\}$ be the lattice vector joining its initial vertex to its
terminal translate.  The axis of the physical cylindrical edge joins the
contact-plate centres $x^h_{e,-}$ and $n_e+x^h_{e,+},$ where both $x^h_{e,-}$ and $x^h_{e,+}$ are represented in the same local
chart, whose origin is the quotient vertex, and satisfy $|x^h_{e,-}|+|x^h_{e,+}|\leq Ch^{2/3}.$
Thus, for a suitable edge length $l_e^h$ and unit tangent $t_e^h$,
\begin{equation}\label{eq:edge-centreline-geometry}
   n_e+x^h_{e,+}-x^h_{e,-}=l_e^h t_e^h,
   \qquad
   0<l_-\leq l_e^h\leq l_+<\infty.
\end{equation}
The superscript $h$ on $l_e^h$ and $t_e^h$ is suppressed below.  The two
incidences $(e,-)$ and $(e,+)$ are always kept distinct, even when they
represent the same quotient vertex.  We assume that each reference cross-section $\omega_e$ is connected and
satisfies $|\omega_e|=1$.  This normalisation avoids inessential
weights in the graph vertex condition; general fixed cross-sectional areas
lead to the corresponding weighted incidence maps.

\begin{assumption}[Standing geometric hypotheses]
\label{ass:standing-geometry}
The following conditions are in force throughout the paper, uniformly for all
sufficiently small $h$.
\begin{enumerate}
\item The periodic quotient has one vertex orbit and finitely many oriented
geometric edge orbits.  The structure is connected, and the vertex region is
strictly contained in the interior of the chosen periodicity cell.
\item The vertex representation in Remark~\ref{rmk:rescaled-vertex-geometry} is volume-preserving and
uniformly bi-Lipschitz.  The rescaled vertex domains are connected, have
diameter and volume comparable with one, and have Poincar\'e and trace
constants bounded independently of $h$.
\item The rescaled contact set is a disjoint union of a fixed finite number of
connected flat plates $D_j^h$, with
$
 \operatorname{diam}D_j^h\asymp\varepsilon_h
$ and $|D_j^h|\asymp\varepsilon_h^2$, where
$\varepsilon_h=h^{1/3}$.  Each plate admits a straight collar of
thickness comparable with $\varepsilon_h$, and the blown-up plates and
collars belong to a fixed bounded Lipschitz class.
\item The full local mixed-boundary estimate
\eqref{eq:standing-zaremba-estimate} holds in every relevant rescaled chart,
including charts at junction edges, corners and conical points.
\item Every edge component is an exact straight product cylinder
$(0,l_e^h)\times h\omega_e$, with
$0<l_-\leq l_e^h\leq l_+<\infty$.  The reference cross-sections
$\omega_e$ are connected bounded Lipschitz domains, form a fixed finite
family, satisfy $|\omega_e|=1$, and have their first positive Neumann
eigenvalues bounded away from zero.  The contact centres and edge axes obey
\eqref{eq:edge-centreline-geometry}.  In the periodic quotient, the edge part
is the pairwise disjoint union, up to endpoint interfaces, of these cylinders.
Distinct cylinders meet neither one another nor the vertex region except
along their prescribed full end sections $D^h_{e,-}=\{0\}\times h\omega_e,$ $D^h_{e,+}=\{l_e^h\}\times h\omega_e.$
These end sections are precisely the corresponding components of the
artificial interface $D_{\mathrm{cell}}$.
\end{enumerate}
\end{assumption}

On $X^h$ we study the resolvent problem for the Neumann Laplacian.  Fix a
compact set $K\Subset\mathbb C\setminus\mathbb R$.  Replacing
$K$ by $K\cup\overline K$, where
$\overline K:=\{\overline z:z\in K\}$, if necessary, we assume throughout
that $K=\overline K$.  Thus there are constants $\varepsilon>0$ and
$R<\infty$ such that $|\operatorname{Im}z|\geq\varepsilon,$ $|z|\leq R,$ $z\in K.$
Unless stated otherwise, constants denoted by $C$ may depend on $K$
and on the constants in Assumption~\ref{ass:standing-geometry}, but are
independent of $h$, $\theta$ and $z\in K$.
Given $f\in L^2(X^h)$ and $z\in K$, we consider
\begin{equation}
\label{eq:resolvent_problem}
    -\Delta u-zu=f
    \qquad
    \text{in }X^h,
\end{equation}
with homogeneous Neumann boundary condition on $\partial X^h$.

The problem is understood in the weak sense: given $f\in L^2(X^h)$, find
$u\in H^1(X^h)$ such that
\begin{equation}
\label{eq:weak_form}
    \int_{X^h}\nabla u\cdot\overline{\nabla\phi}\,dx
    -
    z\int_{X^h}u\,\overline{\phi}\,dx
    =
    \int_{X^h}f\,\overline{\phi}\,dx,
    \qquad
    \forall \phi\in H^1(X^h).
\end{equation}
This weak formulation is associated with the sesquilinear form
\[
    a^h(u,v)
    :=
    \int_{X^h}\nabla u\cdot\overline{\nabla v}\,dx,
    \qquad
    u,v\in H^1(X^h).
\]
The form $a^h$ is densely defined, closed, symmetric, and non-negative in
$L^2(X^h)$. Hence it defines a self-adjoint non-negative operator
$\mathcal A^h$, namely the Neumann Laplacian on $X^h$. Its domain is
\[
D(\mathcal A^h)
:=
\left\{
u\in H^1(X^h):
\exists\, w\in L^2(X^h)
\text{ such that }
a^h(u,v)=\langle w,v\rangle_{L^2(X^h)}
\quad \forall v\in H^1(X^h)
\right\},
\]
and we write $\mathcal A^h u=w$. In strong form,
$\mathcal A^h u=-\Delta u$ with Neumann boundary condition on
$\partial X^h$. Therefore \eqref{eq:resolvent_problem} can be written as $(\mathcal A^h-z)u=f$.

Since $\mathcal A^h$ is self-adjoint and non-negative, one has $\sigma(\mathcal A^h)\subset[0,\infty)$, and hence $K\subset\rho(\mathcal A^h)$. Consequently, for every $z\in K$, the resolvent $(\mathcal A^h-z)^{-1}:L^2(X^h)\to L^2(X^h)$
is well defined, and the weak problem \eqref{eq:weak_form} has the unique
solution $u=(\mathcal A^h-z)^{-1}f$.

In the following sections we study the asymptotic behaviour of this family
of resolvents as $h\to0$ and construct an effective lower-dimensional
model which approximates $(\mathcal A^h-z)^{-1}$ in operator norm,
uniformly for $z\in K$.

\section{Floquet--Gelfand transform and fibre decomposition}

The lattice translations $(\tau_m u)(x)=u(x+m)$, $m\in\mathbb Z^3$,
commute with $\mathcal A^h$. The Floquet--Gelfand transform diagonalises
this translation group and decomposes the problem into cell problems
parametrised by the quasi-momentum $\theta\in Q'=[-\pi,\pi)^3$.

\begin{definition}[Floquet--Gelfand transform]
Let $Q'=[-\pi,\pi)^3$ be the Brillouin zone. For $u\in C_c^\infty(X^h)$, the Floquet--Gelfand transform is given by
\begin{equation}\label{eq:gelfand}
     \widehat u(x,\theta)
    =
    (2\pi)^{-3/2}
    \sum_{n\in\mathbb Z^3}
    e^{-i\theta\cdot(n+x)}
    u(x+n),
    \qquad
    x\in X_{\mathrm{cell}}^h,\quad \theta\in Q' .
\end{equation}
For general $u\in L^2(X^h)$, the sum is understood in the
$L^2$-sense. Parseval's identity for the sequence
$\{u(x+n)\}_{n\in\mathbb Z^3}$, followed by integration over the cell,
gives the unitary extension
\[
    U:L^2(X^h)\to L^2\bigl(Q';L^2(X_{\mathrm{cell}}^h)\bigr),
    \qquad
    Uu=\widehat u .
\]
The inverse transform is given by
\begin{equation}
\label{eq:gelfand_inverse}
    u(x+n)
    =
    (2\pi)^{-3/2}
    \int_{Q'}
    e^{i\theta\cdot (x+n)}
    \widehat u(x,\theta)\,d\theta,
    \qquad
    x\in X_{\mathrm{cell}}^h,\quad n\in\mathbb Z^3 .
\end{equation}
For $f\in L^2(X^h)$, we similarly write $\widehat f(\cdot,\theta):=(Uf)(\cdot,\theta)$.
\end{definition}

The phase convention in \eqref{eq:gelfand} fixes the periodic gauge. Thus, for
each fixed quasi-momentum $\theta\in Q'$, the fibre function is periodic with
respect to the lattice $\widehat u(x+n,\theta)=\widehat u(x,\theta)$ for $n\in\mathbb Z^3$.
We therefore regard the fibre functions as functions on the periodicity
cell $X_{\mathrm{cell}}^h$, with opposite faces identified. With this choice of gauge, the quasi-momentum enters the differential expression rather than
quasiperiodic boundary conditions.
More precisely, the ordinary gradient is replaced by the magnetic gradient
$\nabla+i\theta$. Hence the fibre operator is given formally by $\mathcal A_\theta^h
    =
    -(\nabla+i\theta)\cdot(\nabla+i\theta)$, with homogeneous magnetic Neumann condition on the physical boundary of the thin structure inside the cell. Applying the Floquet--Gelfand transform to the resolvent equation $(\mathcal A^h-z)u=f$ therefore gives, for almost every $\theta\in Q'$,
\begin{equation}
\label{eq:fibre_problem2}
    (\mathcal A_\theta^h-z)\widehat u(\cdot,\theta)
    =
    \widehat f(\cdot,\theta).
\end{equation}

Equivalently, in weak form, for almost every $\theta\in Q'$, one seeks
$\widehat u(\cdot,\theta)\in H^1(X_{\mathrm{cell}}^h)$ such that
\begin{equation}
\label{eq:fibre-weak-form}
    \int_{X_{\mathrm{cell}}^h}
    (\nabla+i\theta)\widehat u
    \cdot
    \overline{(\nabla+i\theta)\phi}\,dx
    -
    z\int_{X_{\mathrm{cell}}^h}
    \widehat u\,\overline{\phi}\,dx
    =
    \int_{X_{\mathrm{cell}}^h}
    \widehat f\,\overline{\phi}\,dx
\end{equation}
for all $\phi\in H^1(X_{\mathrm{cell}}^h)$. Throughout the fibre
analysis, $H^1(X_{\mathrm{cell}}^h)$ denotes the Sobolev space on the
periodic quotient $X_{\mathrm{cell}}^h/\mkern-3mu\sim\,\subset\mathbb T^3$.
Thus corresponding traces are matched across opposite cell faces, and the
Neumann condition acts only on the physical boundary. The global resolvent
decomposes as
\begin{equation}
\label{eq:direct_integral}
    (\mathcal A^h-z)^{-1}
    =
    U^*
    \left(
    \int_{Q'}^\oplus
    (\mathcal A_\theta^h-z)^{-1}\,d\theta
    \right)
    U .
\end{equation}
The norm-resolvent analysis of $\mathcal A^h$ therefore reduces to
estimates uniform in both $h$ and $\theta$ for the fibre resolvents $(\mathcal A_\theta^h-z)^{-1}$ for all $\theta\in Q'$.

\medskip

We now rewrite the fibre problem as a transmission problem on one periodicity
cell. Decompose $X_{\mathrm{cell}}^h
    =
    X_{\mathrm{cell},v}^h
    \cup
    X_{\mathrm{cell},e}^h$, where $X_{\mathrm{cell},v}^h$ is the vertex region and $X_{\mathrm{cell},e}^h$ is the union of the incident edge channels inside the periodic quotient. These two parts meet along the artificial interface $D_{\mathrm{cell}}
    :=
    \partial X_{\mathrm{cell},v}^h
    \cap
    \partial X_{\mathrm{cell},e}^h$.
We write $\widehat u_v
    :=
    \widehat u|_{X_{\mathrm{cell},v}^h},
    \widehat u_e
    :=
    \widehat u|_{X_{\mathrm{cell},e}^h}$, and $\widehat f_v
    :=
    \widehat f|_{X_{\mathrm{cell},v}^h},
    \widehat f_e
    :=
    \widehat f|_{X_{\mathrm{cell},e}^h}$. The vertex lies strictly inside the cell by
Assumption~\ref{ass:standing-geometry}, so
$H^1(X_{\mathrm{cell},v}^h)$ is the ordinary vertex Sobolev space.
For the edge part, $H^1(X_{\mathrm{cell},e}^h)$ includes the periodic
trace identifications.

With this convention, the fibre space $H^1(X_{\mathrm{cell}}^h)$ can be
described as
\[
H^1(X_{\mathrm{cell}}^h)
=
\bigl\{
    (\widehat u_v,\widehat u_e)
    \in
    H^1(X_{\mathrm{cell},v}^h)
    \oplus
    H^1(X_{\mathrm{cell},e}^h)
    :
    \widehat u_v=\widehat u_e
    \text{ on }D_{\mathrm{cell}}
\bigr\}.
\]
Using this decomposition, the fibre weak formulation
\eqref{eq:fibre-weak-form} becomes: find $(\widehat u_v,\widehat u_e)
    \in
    H^1(X_{\mathrm{cell},v}^h)
    \oplus
    H^1(X_{\mathrm{cell},e}^h)$ with common trace on $D_{\mathrm{cell}}$, such that
\begin{equation}
\label{eq:transmission-weak}
\begin{aligned}
&
\int_{X_{\mathrm{cell},v}^h}
(\nabla+i\theta)\widehat u_v
\cdot
\overline{(\nabla+i\theta)\phi_v}\,dx
+
\int_{X_{\mathrm{cell},e}^h}
(\nabla+i\theta)\widehat u_e
\cdot
\overline{(\nabla+i\theta)\phi_e}\,dx
\\
&\quad
- z\int_{X_{\mathrm{cell},v}^h}
\widehat u_v\overline{\phi_v}\,dx
- z\int_{X_{\mathrm{cell},e}^h}
\widehat u_e\overline{\phi_e}\,dx
=
\int_{X_{\mathrm{cell},v}^h}
\widehat f_v\overline{\phi_v}\,dx
+
\int_{X_{\mathrm{cell},e}^h}
\widehat f_e\overline{\phi_e}\,dx
\end{aligned}
\end{equation}
for all test pairs $(\phi_v,\phi_e)
    \in
    H^1(X_{\mathrm{cell},v}^h)
    \oplus
    H^1(X_{\mathrm{cell},e}^h)$ whose traces agree on $D_{\mathrm{cell}}$.

Formally, if the solution is sufficiently smooth, integration by parts in
\eqref{eq:transmission-weak} gives the following transmission problem:
\begin{equation}
\label{eq:transmission}
\begin{cases}
-(\nabla+i\theta)\cdot(\nabla+i\theta)\widehat u_v
-z\widehat u_v
=
\widehat f_v,
& \text{in } X_{\mathrm{cell},v}^h, \\[0.5em]

-(\nabla+i\theta)\cdot(\nabla+i\theta)\widehat u_e
-z\widehat u_e
=
\widehat f_e,
& \text{in } X_{\mathrm{cell},e}^h, \\[0.5em]

\widehat u_v=\widehat u_e,
& \text{on } D_{\mathrm{cell}}, \\[0.5em]

(\nabla+i\theta)\widehat u_v\cdot \nu_v
+
(\nabla+i\theta)\widehat u_e\cdot \nu_e
=0,
& \text{on } D_{\mathrm{cell}}, \\[0.5em]

(\nabla+i\theta)\widehat u_{v(e)}\cdot n=0,
& \text{on }
\partial X_{\mathrm{cell},v(e)}^h
\setminus D_{\mathrm{cell}} .
\end{cases}
\end{equation}
Here $\nu_v$ and $\nu_e$ denote the outgoing normals to the vertex and edge
subdomains on $D_{\mathrm{cell}}$, while $n$ denotes the outward normal on
the physical boundary of the corresponding subdomain. The fourth condition in \eqref{eq:transmission} expresses flux balance across
the artificial interface. Since $D_{\mathrm{cell}}$ lies inside the thin structure, the outgoing
fluxes of the two subdomains balance. Hence \eqref{eq:transmission} is precisely the transmission
form of the fibre resolvent problem \eqref{eq:fibre_problem2}.

\section{Construction of the Ryzhov triple on the unit cell}
\label{subsec:ryzhov-triple-cell}

For fixed $\theta\in Q'$, we construct a Ryzhov triple for the fibre
transmission problem on the periodicity cell.

We set $\mathcal H
    :=
    L^2(X_{\mathrm{cell}}^h)
    =
    L^2(X^h_{\mathrm{cell},v})
    \oplus
    L^2(X^h_{\mathrm{cell},e})
    =:
    \mathcal H_v\oplus\mathcal H_e,$ $\mathcal E:=L^2(D_{\mathrm{cell}}),$ and denote by $P_v$ and $P_e$ the orthogonal projections of
$\mathcal H$ onto $\mathcal H_v$ and $\mathcal H_e$, respectively.
Here $L^2(X_{\mathrm{cell}, e}^h)$ denotes the $L^2$-space on the
periodic quotient of the edge part in the cell. We construct a Ryzhov triple $\bigl(
        \mathcal A_{0,\theta},
        \Pi_\theta,
        \Lambda_\theta
    \bigr)$ in $\mathcal H$ with boundary space $\mathcal E$. Henceforth, inner products are assumed linear in the first argument.

\paragraph{The decoupled Dirichlet operator.}

Define the sesquilinear form
\begin{equation}
\label{eq:a0-ve-theta}
    a_{0,v(e),\theta}(\hat u,\hat v)
    :=
    \int_{X^h_{\mathrm{cell},v(e)}}
    (\nabla+i\theta)\hat u\cdot
    \overline{(\nabla+i\theta)\hat v}\,dx
\end{equation}
with form domain
\[
    D(a_{0,v(e),\theta})
    :=
    H^1_{D_{\mathrm{cell}}}
    (X^h_{\mathrm{cell},v(e)})
    :=
    \left\{
        \hat u\in H^1(X^h_{\mathrm{cell},v(e)}):
        \hat u|_{D_{\mathrm{cell}}}=0
    \right\}.
\]
The associated self-adjoint operator in
$L^2(X^h_{\mathrm{cell},v(e)})$ is denoted by $\mathcal A_{0,v(e),\theta} $. It corresponds to the differential expression $-(\nabla+i\theta)\cdot(\nabla+i\theta)$ on $X^h_{\mathrm{cell},v(e)}$, with Dirichlet condition on
$D_{\mathrm{cell}}$ and Neumann condition $(\nabla+i\theta)\hat u\cdot \nu_{v(e)}=0$ on $\partial X^h_{\mathrm{cell},v(e)}\setminus D_{\mathrm{cell}}$.

More precisely, we have
\[
\begin{aligned}
D(\mathcal A_{0,v(e),\theta})
=
\bigl\{
    & \hat u\in
    H^1_{D_{\mathrm{cell}}}(X^h_{\mathrm{cell},v(e)}):
     \exists f\in L^2(X^h_{\mathrm{cell},v(e)})
    \text{ such that}                                      \\
    &\ a_{0,v(e),\theta}(\hat u,\hat v)
    =
    (f,\hat v)_{L^2(X^h_{\mathrm{cell},v(e)})} \quad 
    \forall
    \hat v\in
    H^1_{D_{\mathrm{cell}}}(X^h_{\mathrm{cell},v(e)})
\bigr\},
\end{aligned}
\]
and then $\mathcal A_{0,v(e),\theta}\hat u=f$. 

Since the interface $D_{\mathrm{cell}}$ has positive surface measure and
Dirichlet data are imposed there, the corresponding Poincaré inequality gives the existence of a constant $C^h_\theta>0$ such that
\[
    ||\hat u||_{L^2(X^h_{\mathrm{cell},v(e)})}^2
    \leq
    C_{\theta}^h
    a_{0,v(e),\theta}(\hat u,\hat u),
    \qquad
    \hat u\in
    H^1_{D_{\mathrm{cell}}}(X^h_{\mathrm{cell},v(e)}).
\]
Hence $0\in\rho(\mathcal A_{0,v(e),\theta})$.

We define the decoupled Dirichlet operator $\mathcal A_{0,\theta}
    :=
    \mathcal A_{0,v,\theta}
    \oplus
    \mathcal A_{0,e,\theta}$ in $\mathcal H_v\oplus\mathcal H_e.$
Then $\mathcal A_{0,\theta}$ is self-adjoint, non-negative, and boundedly
invertible.

\paragraph{The Ryzhov lift operator.}

We next define the Ryzhov lift operator from the artificial interface
$D_{\mathrm{cell}}$ into the vertex and edge regions. We first construct the
classical Poisson lift for sufficiently regular boundary data. For
the relatively open interface $D_{\mathrm{cell}}$, the notation
$H^{1/2}(D_{\mathrm{cell}})$ denotes the trace space obtained by restriction
from the boundary of either adjacent subdomain, equipped with its quotient
norm.  The two realisations are canonically identified on the common flat
contact plates.  For
$\phi\in H^{1/2}(D_{\mathrm{cell}})$, let $\widetilde \Pi_{v(e),\theta}\phi=\hat u_{v(e)}$, where
$\hat u_{v(e)}\in H^1(X^h_{\mathrm{cell},v(e)})$ is the unique weak solution of
\begin{equation}
\label{eq:classical-poisson-lift-ve}
\begin{cases}
-(\nabla+i\theta)\cdot(\nabla+i\theta)\hat u_{v(e)}=0,
& \text{in } X^h_{\mathrm{cell},v(e)},\\[2mm]
\hat u_{v(e)}=\phi,
& \text{on } D_{\mathrm{cell}},\\[2mm]
(\nabla+i\theta)\hat u_{v(e)}\cdot n=0,
& \text{on }
\partial X^h_{\mathrm{cell},v(e)}\setminus D_{\mathrm{cell}} .
\end{cases}
\end{equation}
Here the condition on $D_{\mathrm{cell}}$ is understood in the usual trace
sense.

Indeed, let $R_{v(e)}^h:
    H^{1/2}(D_{\mathrm{cell}})
    \to
    H^1(X^h_{\mathrm{cell},v(e)})$ be a bounded right inverse of the trace map on $D_{\mathrm{cell}}$. Writing
\[
    \hat u_{v(e)}
    =
    R_{v(e)}^h\phi+w_{v(e)},
    \qquad
    w_{v(e)}
    \in
    H^1_{D_{\mathrm{cell}}}(X^h_{\mathrm{cell},v(e)}),
\]
the weak formulation of \eqref{eq:classical-poisson-lift-ve} becomes
\[
    a_{0,v(e),\theta}(w_{v(e)},\eta)
    =
    -
    a_{0,v(e),\theta}(R_{v(e)}^h\phi,\eta),
    \qquad
    \forall
    \eta\in
    H^1_{D_{\mathrm{cell}}}(X^h_{\mathrm{cell},v(e)}).
\]
Since the Dirichlet part $D_{\mathrm{cell}}$ has positive surface measure,
the form $a_{0,v(e),\theta}$ is coercive on
$H^1_{D_{\mathrm{cell}}}(X^h_{\mathrm{cell},v(e)})$. Hence the
Lax--Milgram lemma gives a unique $w_{v(e)}$, and therefore a unique
$\hat u_{v(e)}$. Moreover,
\begin{equation}
\label{eq:H1-poisson-estimate-ve}
    ||\widetilde \Pi_{v(e),\theta}\phi||_{H^1(X^h_{\mathrm{cell},v(e)})}
    \leq
    C_{\theta}^h
    ||\phi||_{H^{1/2}(D_{\mathrm{cell}})} .
\end{equation}

The Ryzhov construction requires a lift which is bounded from
$L^2(D_{\mathrm{cell}})$ into $L^2(X^h_{\mathrm{cell},v(e)})$. We construct this
$L^2$-bounded extension by transposition. Let $f\in L^2(X^h_{\mathrm{cell},v(e)})$ and set $w_f
    :=
    (\mathcal A_{0,v(e),\theta})^{-1}f$. Then $w_f$ solves
\[
\begin{cases}
-(\nabla+i\theta)\cdot(\nabla+i\theta)w_f=f,
& \text{in } X^h_{\mathrm{cell},v(e)},\\[2mm]
w_f=0,
& \text{on } D_{\mathrm{cell}},\\[2mm]
(\nabla+i\theta)w_f\cdot n=0,
& \text{on }
\partial X^h_{\mathrm{cell},v(e)}\setminus D_{\mathrm{cell}} .
\end{cases}
\]
By the assumed mixed-boundary regularity for the Zaremba problem, together
with the uniform non-degeneracy of the Dirichlet--Neumann junctions, the
conormal trace of $w_f$ on $D_{\mathrm{cell}}$ is an $L^2$-function and
satisfies
\begin{equation}
\label{eq:adjoint-conormal-estimate-ve}
    \left\Vert
    \left.
    (\nabla+i\theta)w_f\cdot \nu_{v(e)}
    \right|_{D_{\mathrm{cell}}}
    \right\Vert_{L^2(D_{\mathrm{cell}})}
    \leq
    C_{\theta}^h
    ||f||_{L^2(X^h_{\mathrm{cell},v(e)})}.
\end{equation}
Equivalently, the operator $B_{v(e),\theta}:
    L^2(X^h_{\mathrm{cell},v(e)})
    \to
    L^2(D_{\mathrm{cell}})$ defined by
\[
    B_{v(e),\theta}f
    :=
    -
    \left.
    \bigl(
    (\nabla+i\theta)
    (\mathcal A_{0,v(e),\theta})^{-1}f
    \cdot \nu_{v(e)}
    \bigr)
    \right|_{D_{\mathrm{cell}}}
\]
is bounded.

For $\phi\in H^{1/2}(D_{\mathrm{cell}})$ and
$f\in L^2(X^h_{\mathrm{cell},v(e)})$, Green's identity gives
\begin{equation}
\label{eq:green-transposition-poisson}
    \bigl(
        \widetilde \Pi_{v(e),\theta}\phi,
        f
    \bigr)_{L^2(X^h_{\mathrm{cell},v(e)})}
    =
    \bigl(
        \phi,
        B_{v(e),\theta}f
    \bigr)_{L^2(D_{\mathrm{cell}})} .
\end{equation}
Therefore, using \eqref{eq:adjoint-conormal-estimate-ve}, we obtain
\[
\begin{aligned}
    \left|
    \bigl(
        \widetilde \Pi_{v(e),\theta}\phi,
        f
    \bigr)_{L^2(X^h_{\mathrm{cell},v(e)})}
    \right| \leq
    ||\phi||_{L^2(D_{\mathrm{cell}})}
    ||B_{v(e),\theta}f||_{L^2(D_{\mathrm{cell}})}
    \leq
    C_{\theta}^h
    ||\phi||_{L^2(D_{\mathrm{cell}})}
    ||f||_{L^2(X^h_{\mathrm{cell},v(e)})}.
\end{aligned}
\]
Taking the supremum over all
$f\in L^2(X^h_{\mathrm{cell},v(e)})$ with
$||f||_{L^2(X^h_{\mathrm{cell},v(e)})}=1$, we obtain
\begin{equation}
\label{eq:l2-poisson-estimate-ve}
    ||\widetilde \Pi_{v(e),\theta}\phi||_{L^2(X^h_{\mathrm{cell},v(e)})}
    \leq
    C_{\theta}^h
    ||\phi||_{L^2(D_{\mathrm{cell}})},
    \qquad
    \phi\in H^{1/2}(D_{\mathrm{cell}}).
\end{equation}

Since $H^{1/2}(D_{\mathrm{cell}})$ is dense in
$L^2(D_{\mathrm{cell}})$, estimate \eqref{eq:l2-poisson-estimate-ve}
implies that
\[
    \widetilde \Pi_{v(e),\theta}: L^2(D_{\mathrm{cell}}) \supset  H^{1/2}(D_{\mathrm{cell}})
    \to
    L^2(X^h_{\mathrm{cell},v(e)})
\]
extends uniquely to a bounded operator $\Pi_{v(e),\theta}:
    L^2(D_{\mathrm{cell}})
    \to
    L^2(X^h_{\mathrm{cell},v(e)})$.
Equivalently, $\Pi_{v(e),\theta}=B_{v(e),\theta}^*$. Thus, for $g\in L^2(D_{\mathrm{cell}})$, the element
$\Pi_{v(e),\theta}g\in L^2(X^h_{\mathrm{cell},v(e)})$ is characterised by
\begin{equation}
\label{eq:transposition-definition-Pi}
    \bigl(
        \Pi_{v(e),\theta}g,
        f
    \bigr)_{L^2(X^h_{\mathrm{cell},v(e)})}
    =
    \bigl(
        g,
        B_{v(e),\theta}f
    \bigr)_{L^2(D_{\mathrm{cell}})}
    \qquad
    \forall f\in L^2(X^h_{\mathrm{cell},v(e)}).
\end{equation}
If $g\in H^{1/2}(D_{\mathrm{cell}})$, this definition agrees with the
classical weak solution of \eqref{eq:classical-poisson-lift-ve}. For general $g\in L^2(D_{\mathrm{cell}})$, the element
$\Pi_{v(e),\theta}g$ need not belong to
$H^1(X^h_{\mathrm{cell},v(e)})$, and need not have an
ordinary Sobolev trace on $D_{\mathrm{cell}}$. We therefore interpret
$\Pi_{v(e),\theta}g$ by transposition.

Next, we define the coupled Ryzhov lift by $\Pi_\theta g
    :=
    \Pi_{v,\theta}g\oplus \Pi_{e,\theta}g$ for all $g\in L^2(D_{\mathrm{cell}})$. Thus $\Pi_\theta:
    \mathcal E=L^2(D_{\mathrm{cell}})
    \to
    \mathcal H$ is bounded.

We verify the structural properties of the lift, beginning with injectivity. Since the
transposition Poisson operator is defined by $\Pi_{v(e),\theta}=B_{v(e),\theta}^*$, we have
\[
    \ker \Pi_{v(e),\theta}
    =
    \ker B_{v(e),\theta}^*
    =
    \bigl(\overline{\operatorname{Ran}B_{v(e),\theta}}\bigr)^\perp.
\]
It is therefore enough to show that
$\overline{\operatorname{Ran}B_{v(e),\theta}}=L^2(D_{\mathrm{cell}})$.

For density, it suffices to show that every smooth function compactly
supported in the interior of the contact plate belongs to
$\operatorname{Ran}B_{v(e),\theta}$. Let
$\psi\in C_c^\infty(D_{\mathrm{cell}}^\circ)$. Choose a collar
neighbourhood of $D_{\mathrm{cell}}^\circ$ inside
$X_{\mathrm{cell},v(e)}^h$, with local coordinates $x=y-t\,\nu_{v(e)}(y)$, where
$y\in D_{\mathrm{cell}}^\circ$ and $0<t<\varepsilon$, $\nu_{v(e)}$ denotes the outward unit normal to $X_{\mathrm{cell},v(e)}^h$ on $D_{\mathrm{cell}}$. Let $\chi\in C^\infty([0,\varepsilon])$ have support in $[0,\varepsilon)$, satisfy $\chi(0)=1$, and define
$w(y,t):=t\chi(t)\psi(y)$ in the collar, extending $w$ by zero outside a slightly larger collar. Then $w\in D(\mathcal A_{0,v(e),\theta})$, $w|_{D_{\mathrm{cell}}}=0$, and $-\left.
    (\nabla+i\theta)w\cdot \nu_{v(e)}
    \right|_{D_{\mathrm{cell}}}
    =
    \psi$. 

Set $f:=\mathcal A_{0,v(e),\theta}w\in L^2(X_{\mathrm{cell},v(e)}^h)$. Since $w=(\mathcal A_{0,v(e),\theta})^{-1}f$, the definition of
$B_{v(e),\theta}$ gives
\[
    B_{v(e),\theta}f
    =
    -
    \left.
    (\nabla+i\theta)
    (\mathcal A_{0,v(e),\theta})^{-1}f
    \cdot \nu_{v(e)}
    \right|_{D_{\mathrm{cell}}}
    =
    -
    \left.
    (\nabla+i\theta)w\cdot \nu_{v(e)}
    \right|_{D_{\mathrm{cell}}}
    =
    \psi .
\]
Therefore $C_c^\infty(D_{\mathrm{cell}}^\circ)
    \subset
    \operatorname{Ran}B_{v(e),\theta}$. Since $\partial D_{\mathrm{cell}}$ has surface measure zero, $C_c^\infty(D_{\mathrm{cell}}^\circ)$ is dense in
$L^2(D_{\mathrm{cell}})$. Hence $\overline{\operatorname{Ran}B_{v(e),\theta}}
    =
    L^2(D_{\mathrm{cell}})$. Consequently,
\[
    \ker \Pi_{v(e),\theta}
    =
    \ker B_{v(e),\theta}^*
    =
    \bigl(\overline{\operatorname{Ran}B_{v(e),\theta}}\bigr)^\perp
    =
    \{0\}.
\]
Since the coupled lift is $\Pi_\theta g
    =
    \Pi_{v,\theta}g\oplus \Pi_{e,\theta}g$, it follows in particular that $\ker \Pi_\theta=\{0\}$.

For the second structural condition, we prove that
\[
    D(\mathcal A_{0,v(e),\theta})
    \cap
    \operatorname{Ran}\Pi_{v(e),\theta}
    =
    \{0\}.
\]
Indeed, suppose that $\hat u_{v(e)}
    =
    \Pi_{v(e),\theta}g$
belongs to $D(\mathcal A_{0,v(e),\theta})$, for some
$g\in L^2(D_{\mathrm{cell}})$.  The transposition solution is homogeneous
in the interior.  To see this, let
$\varphi\in C_c^\infty(X^h_{\mathrm{cell},v(e)})$ and put
$f:=\mathcal A_{0,v(e),\theta}\varphi$.  Since $\varphi$ is supported
away from $D_{\mathrm{cell}}$, one has
$B_{v(e),\theta}f=0$.  The transposition identity
\eqref{eq:transposition-definition-Pi} therefore gives
\[
    \bigl(
        \hat u_{v(e)},
        \mathcal A_{0,v(e),\theta}\varphi
    \bigr)_{L^2(X^h_{\mathrm{cell},v(e)})}
    =0.
\]
Thus $-(\nabla+i\theta)\mathbin{\cdot}(\nabla+i\theta)\hat u_{v(e)}=0$
in the distributional sense in $X^h_{\mathrm{cell},v(e)}$.  Since
$\hat u_{v(e)}\in D(\mathcal A_{0,v(e),\theta})$, it follows that
$\mathcal A_{0,v(e),\theta}\hat u_{v(e)}=0$.  But
$0\in\rho(\mathcal A_{0,v(e),\theta})$, and hence
$\hat u_{v(e)}=0$.  Injectivity of $\Pi_{v(e),\theta}$ then gives
$g=0$.

Summing over the vertex and edge components gives $D(\mathcal A_{0,\theta})
    \cap
    \operatorname{Ran}\Pi_\theta
    =
    \{0\}$. Therefore $\Pi_\theta$ satisfies the structural conditions required in the
Ryzhov construction.

\paragraph{The Dirichlet-to-Neumann operator.}

We first define the fundamental Dirichlet-to-Neumann map in the natural
duality scale. For $v(e)\in\{v,e\}$, define the form
\[
    \lambda_{v(e),\theta}:
    H^{1/2}(D_{\mathrm{cell}})
    \times
    H^{1/2}(D_{\mathrm{cell}})
    \to
    \mathbb C
\]
by
\begin{equation}
\label{eq:lambda-ve-theta}
    \lambda_{v(e),\theta}(g,\psi)
    :=
    \int_{X^h_{\mathrm{cell},v(e)}}
    (\nabla+i\theta)
    \widetilde \Pi_{v(e),\theta} g
    \cdot
    \overline{
    (\nabla+i\theta)
    \widetilde \Pi_{v(e),\theta} \psi
    }
    \,dx .
\end{equation}
Here $H^{-1/2}(D_{\mathrm{cell}})$ denotes the anti-dual of the restriction
trace space $H^{1/2}(D_{\mathrm{cell}})$, with $L^2(D_{\mathrm{cell}})$
as pivot space.

In the Zaremba setting the conormal trace of
$\widetilde \Pi_{v(e),\theta} g$ need not belong to
$L^2(D_{\mathrm{cell}})$. We therefore interpret it as
\[
    \lambda_{v(e),\theta}(g,\psi)
    =
    \Bigl\langle
        \left.
        (\nabla+i\theta)
        \widetilde \Pi_{v(e),\theta} g
        \cdot \nu_{v(e)}
        \right|_{D_{\mathrm{cell}}},
        \psi
    \Bigr\rangle_{H^{-1/2}(D_{\mathrm{cell}}),H^{1/2}(D_{\mathrm{cell}})} .
\]
The norm equivalence proved in Theorem~\ref{thm:lambda-coercivity-compact-resolvent}
below shows, in particular, that these forms are densely defined, symmetric,
non-negative and closed in $L^2(D_{\mathrm{cell}})$.

Define the fundamental positive Dirichlet-to-Neumann map $\mathfrak T_{v(e),\theta}:
    H^{1/2}(D_{\mathrm{cell}})
    \to
    H^{-1/2}(D_{\mathrm{cell}})$ by
\begin{equation}
\label{eq:fundamental-DtN}
    \left\langle
        \mathfrak T_{v(e), \theta} g,\psi
    \right\rangle_{H^{-1/2},H^{1/2}}
    =
    \lambda_{v(e),\theta}(g,\psi),
    \qquad
    g,\psi\in H^{1/2}(D_{\mathrm{cell}}).
\end{equation}
Equivalently,
\[
    \mathfrak T_{v(e),\theta} g
    =
    \left.
    (\nabla+i\theta)
    \widetilde \Pi_{v(e),\theta} g
    \cdot \nu_{v(e)}
    \right|_{D_{\mathrm{cell}}},
\]
where the equality is understood in
$H^{-1/2}(D_{\mathrm{cell}})$.

For the Ryzhov triple, we use the $L^2$-realisation of this
fundamental Dirichlet-to-Neumann map. Define the non-positive $L^2$-realisation
$\Lambda_{v(e),\theta}$ by
\[
D(\Lambda_{v(e),\theta})
    =
    \bigl\{
        g\in H^{1/2}(D_{\mathrm{cell}}):
        -\mathfrak T_{v(e),\theta} g\in L^2(D_{\mathrm{cell}})
    \bigr\}.
\]
Equivalently,
\[
\begin{aligned}
D(\Lambda_{v(e),\theta})
=
\bigl\{
    g\in H^{1/2}(D_{\mathrm{cell}}):
    &\ \exists l\in L^2(D_{\mathrm{cell}})
    \text{ such that}                                     \\
    &\
    \lambda_{v(e),\theta}(g,\psi)
    =
    (l,\psi)_{L^2(D_{\mathrm{cell}})}
    \quad
    \forall \psi\in H^{1/2}(D_{\mathrm{cell}})
\bigr\},
\end{aligned}
\]
and then $\Lambda_{v(e),\theta}g=-l$.  By Kato's representation theorem,
$\Lambda_{v(e),\theta}$ is self-adjoint and non-positive in
$L^2(D_{\mathrm{cell}})$.  For
$g\in D(\Lambda_{v(e),\theta})$ and
$\psi\in H^{1/2}(D_{\mathrm{cell}})$, one has
\[
 -(\Lambda_{v(e),\theta}g,\psi)_{L^2(D_{\mathrm{cell}})}
 =\lambda_{v(e),\theta}(g,\psi).
\]
Equivalently, whenever the conormal traces are represented by
$L^2(D_{\mathrm{cell}})$-functions,
\[
    \Lambda_{v(e),\theta} g
    =
    -
        (\nabla+i\theta)
        \widetilde \Pi_{v(e),\theta} g
        \cdot \nu_{v(e)}|_{D_{\mathrm{cell}}}
   .
\]

The following theorem establishes compactness of the resolvent of
$\Lambda_{v(e),\theta}$.
\begin{theorem}
\label{thm:lambda-coercivity-compact-resolvent}
Assume the standing geometric hypotheses in
Assumption~\ref{ass:standing-geometry}, in particular the full local
mixed-boundary estimate \eqref{eq:standing-zaremba-estimate} at smooth
junction curves and at every edge, corner or conical point. Then, for
each fixed $\theta\in Q'$, there exist constants $c_\theta^h>0$ and
$C_\theta^h>0$ such that
\begin{equation}
\label{eq:lambda-coercive}
    c_\theta^h
    ||g||_{H^{1/2}(D_{\mathrm{cell}})}^2
    \leq
    \lambda_{v(e),\theta}(g,g)
    +
    ||g||_{L^2(D_{\mathrm{cell}})}^2
    \leq
    C_\theta^h
    ||g||_{H^{1/2}(D_{\mathrm{cell}})}^2 ,
    \qquad
    g\in H^{1/2}(D_{\mathrm{cell}}).
\end{equation}
Consequently, the shifted form $\lambda_{v(e),\theta}(\cdot,\cdot)
    +
    (\cdot,\cdot)_{L^2(D_{\mathrm{cell}})}$ is coercive on $H^{1/2}(D_{\mathrm{cell}})$. Moreover,
\[
    (-\Lambda_{v(e),\theta}+I)^{-1}:
    L^2(D_{\mathrm{cell}})
    \to
    L^2(D_{\mathrm{cell}})
\]
is compact. Hence $\Lambda_{v(e),\theta}$ has compact resolvent and purely
discrete spectrum in $L^2(D_{\mathrm{cell}})$.
\end{theorem}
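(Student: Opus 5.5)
The plan is to argue at fixed $h$ and $\theta$, with all constants allowed to depend on both; uniformity is not claimed here. The approach is to show that $\lambda_{v(e),\theta}(g,g)+\|g\|^2_{L^2}$ is comparable with $\|\widetilde\Pi_{v(e),\theta}g\|^2_{H^1}$, and that the latter is comparable with $\|g\|^2_{H^{1/2}(D_{\mathrm{cell}})}$. Compactness of the resolvent then follows from the compact embedding $H^{1/2}(D_{\mathrm{cell}})\hookrightarrow L^2(D_{\mathrm{cell}})$.

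The \emph{upper bound} is immediate. The magnetic gradient satisfies $|(\nabla+i\theta)u|\le|\nabla u|+|\theta||u|$ with $|\theta|\le\sqrt3\,\pi$, so $\lambda_{v(e),\theta}(g,g)\le C\|\widetilde\Pi_{v(e),\theta}g\|^2_{H^1}$. Combining this with \eqref{eq:H1-poisson-estimate-ve} and the trivial bound $\|g\|_{L^2}\le C\|g\|_{H^{1/2}}$ gives the right-hand inequality in \eqref{eq:lambda-coercive}.

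For the \emph{lower bound}, I set $u=\widetilde\Pi_{v(e),\theta}g$. By the definition of $H^{1/2}(D_{\mathrm{cell}})$ as a quotient trace space, $\|g\|_{H^{1/2}}\le\|u\|_{H^1(X^h_{\mathrm{cell},v(e)})}$ up to a fixed constant. It therefore suffices to prove
\[
\|u\|^2_{H^1}\le C\bigl(\|(\nabla+i\theta)u\|^2_{L^2}+\|u|_{D_{\mathrm{cell}}}\|^2_{L^2(D_{\mathrm{cell}})}\bigr),\qquad u\in H^1(X^h_{\mathrm{cell},v(e)}).
\]
This is a Friedrichs-type inequality, and I would prove it by the standard compactness-contradiction argument. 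Take $u_n$ with $\|u_n\|_{H^1}=1$ and the right-hand side tending to zero. Since $\|\nabla u_n\|\le\|(\nabla+i\theta)u_n\|+|\theta|\|u_n\|$, Rellich's theorem on the bounded Lipschitz pieces (the vertex domain, and the finitely many cylinders with periodic identifications) gives an $L^2$-convergent subsequence. Because $(\nabla+i\theta)u_n\to0$, the sequence is Cauchy in $H^1$, and its limit $u$ satisfies $(\nabla+i\theta)u=0$. This means $e^{i\theta\cdot x}u$ is locally constant, and since each component meets $D_{\mathrm{cell}}$ in a set of positive measure, continuity of the trace and $u|_{D_{\mathrm{cell}}}=0$ force $u=0$. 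This contradicts $\|u\|_{H^1}=1$.

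One point needs care for the edge part. On the periodic quotient the local gauge $e^{i\theta\cdot x}$ is only defined locally, so I would apply it on each cylinder separately, where it is single-valued, and use connectedness of each cylinder. Every edge component touches $D_{\mathrm{cell}}$ at its end sections, so the argument goes through. The same reasoning is precisely the Poincar\'e inequality already invoked for $\mathcal A_{0,v(e),\theta}$.

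Coercivity of the shifted form then follows directly from \eqref{eq:lambda-coercive}. Hence the form is closed on $H^{1/2}(D_{\mathrm{cell}})$, and $-\Lambda_{v(e),\theta}+I$ is its associated operator, an isomorphism from the form domain onto the dual. As an operator on $L^2$, the resolvent $(-\Lambda_{v(e),\theta}+I)^{-1}$ is bounded from $L^2$ into $H^{1/2}(D_{\mathrm{cell}})$, by the estimate $c\|g\|^2_{H^{1/2}}\le((-\Lambda+I)g,g)\le\|(-\Lambda+I)g\|_{L^2}\|g\|_{L^2}$. Composing with the compact embedding $H^{1/2}(D_{\mathrm{cell}})\hookrightarrow L^2(D_{\mathrm{cell}})$ shows the resolvent is compact.

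The main obstacle is this compactness of the embedding for the restriction space on the relatively open, non-closed interface. I would obtain it from the compact embedding $H^1(X^h_{\mathrm{cell},v(e)})\hookrightarrow H^{1/2-\delta}(\partial X^h_{\mathrm{cell},v(e)})\hookrightarrow L^2(\partial X^h_{\mathrm{cell},v(e)})$ on the bounded Lipschitz domain. Concretely, I take a bounded extension operator $H^{1/2}(D_{\mathrm{cell}})\to H^1(X^h_{\mathrm{cell},v(e)})$ (for instance $R^h_{v(e)}$), follow it by the full boundary trace, which is compact into $L^2(\partial X^h_{\mathrm{cell},v(e)})$, and then restrict to $D_{\mathrm{cell}}$. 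Discreteness of the spectrum of the self-adjoint operator $\Lambda_{v(e),\theta}$ then follows.
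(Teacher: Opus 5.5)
Your proposal is correct and follows the same route as the paper. The upper bound comes from the $H^1$ control of the lift; the paper phrases this as the Dirichlet principle with $R^h_{v(e)}g$ as competitor. The lower bound uses the trace theorem together with a magnetic trace--Poincar\'e inequality. Compactness follows because the shifted resolvent maps $L^2(D_{\mathrm{cell}})$ boundedly into $H^{1/2}(D_{\mathrm{cell}})$.

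Your compactness--contradiction proof of that Friedrichs-type inequality, gauging separately on each connected component meeting $D_{\mathrm{cell}}$, fills in a step the paper asserts without proof. The same is true of your factorisation of the embedding $H^{1/2}(D_{\mathrm{cell}})\hookrightarrow L^2(D_{\mathrm{cell}})$ through the compact full boundary trace.
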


\begin{proof}
Let $\hat u=\widetilde\Pi_{v(e),\theta}g$. Since $\hat u$ minimises
the magnetic energy among functions with trace $g$, the bounded right
inverse of the trace gives
\[
 \lambda_{v(e),\theta}(g,g)
 \leq \Vert(\nabla+i\theta)R^h_{v(e)}g\Vert_{L^2}^2
 \leq C_\theta^h\Vert g\Vert_{H^{1/2}(D_{\mathrm{cell}})}^2.
\]
This proves the upper bound in \eqref{eq:lambda-coercive}. Conversely, the
trace theorem and the magnetic trace--Poincar\'e inequality give
\[
 \Vert g\Vert_{H^{1/2}(D_{\mathrm{cell}})}^2
 \leq C^h\Vert\hat u\Vert_{H^1}^2
 \leq C_\theta^h\bigl(
    \lambda_{v(e),\theta}(g,g)
    +\Vert g\Vert_{L^2(D_{\mathrm{cell}})}^2\bigr),
\]
which is the lower bound.

If $q=(-\Lambda_{v(e),\theta}+I)^{-1}g$, its form equation tested
against $q$, together with \eqref{eq:lambda-coercive}, yields
\[
 \Vert q\Vert_{H^{1/2}(D_{\mathrm{cell}})}
 \leq C_\theta^h\Vert g\Vert_{L^2(D_{\mathrm{cell}})}.
\]
Thus the shifted resolvent maps into $H^{1/2}(D_{\mathrm{cell}})$, which
is compactly embedded in $L^2(D_{\mathrm{cell}})$. This proves the
assertion.
\end{proof}

We define the coupled Dirichlet-to-Neumann operator through the sum of the
component forms. Set
\[
    \lambda_\theta(g,\psi)
    :=
    \lambda_{v,\theta}(g,\psi)
    +
    \lambda_{e,\theta}(g,\psi),
    \qquad
    g,\psi\in H^{1/2}(D_{\mathrm{cell}}).
\]
Thus the form domain of the coupled Dirichlet-to-Neumann form is $D(\lambda_\theta)=H^{1/2}(D_{\mathrm{cell}})$. Since each of the forms $\lambda_{v,\theta}$ and $\lambda_{e,\theta}$ is densely defined, closed, symmetric, and non-negative in $L^2(D_{\mathrm{cell}})$, their sum
$\lambda_\theta$ has the same properties. In particular, by the first
representation theorem, there exists a unique self-adjoint non-positive
operator $\Lambda_\theta$ in $L^2(D_{\mathrm{cell}})$ associated with the form $\lambda_\theta$, in the sense that
\[
    -(\Lambda_\theta g,\psi)_{L^2(D_{\mathrm{cell}})}
    =
    \lambda_\theta(g,\psi),
    \qquad
    g\in D(\Lambda_\theta),\quad
    \psi\in H^{1/2}(D_{\mathrm{cell}}).
\]
Equivalently, its operator domain is
\[
\begin{aligned}
D(\Lambda_\theta)
=
\bigl\{
 g\in H^{1/2}(D_{\mathrm{cell}}):\ &\exists\,l\in L^2(D_{\mathrm{cell}})
 \text{ such that}\\
 &\lambda_\theta(g,\psi)
 =(l,\psi)_{L^2(D_{\mathrm{cell}})}
 \quad\forall\psi\in H^{1/2}(D_{\mathrm{cell}})
\bigr\}.
\end{aligned}
\]
Then $\Lambda_\theta g=-l$. In this sense, $\Lambda_\theta$ is the $L^2(D_{\mathrm{cell}})$-realisation of the negative total Dirichlet-to-Neumann map. Formally, whenever the conormal traces are represented by $L^2(D_{\mathrm{cell}})$-functions, one has
\[
    \Lambda_\theta g
    =
    -
    \Bigl(
        (\nabla+i\theta)\widetilde\Pi_{v,\theta}g\cdot \nu_v
        \big|_{D_{\mathrm{cell}}}
        +
        (\nabla+i\theta)\widetilde\Pi_{e,\theta}g\cdot \nu_e
        \big|_{D_{\mathrm{cell}}}
    \Bigr).
\]
By Theorem~\ref{thm:lambda-coercivity-compact-resolvent} and the non-negativity
of the component forms, the coupled form satisfies
\[
    c_\theta^h||q||_{H^{1/2}(D_{\mathrm{cell}})}^2
    \leq
    \lambda_\theta(q,q)+||q||_{L^2(D_{\mathrm{cell}})}^2
    \leq
    C_\theta^h||q||_{H^{1/2}(D_{\mathrm{cell}})}^2 .
\]
Hence $(-\Lambda_\theta+I)^{-1}$ maps $L^2(D_{\mathrm{cell}})$ boundedly
into $H^{1/2}(D_{\mathrm{cell}})$. Since the embedding
$H^{1/2}(D_{\mathrm{cell}})\hookrightarrow L^2(D_{\mathrm{cell}})$ is compact,
it follows that $(-\Lambda_\theta+I)^{-1}:L^2(D_{\mathrm{cell}})
    \to L^2(D_{\mathrm{cell}})$ is compact.

\paragraph{The maximal operator and the boundary maps.}

The abstract maximal operator associated with the Ryzhov triple is defined by $D(\mathcal A_{\max,\theta})
    :=
    D(\mathcal A_{0,\theta})
    \dotplus
    \operatorname{Ran}\Pi_\theta.$
Every element $\hat u\in D(\mathcal A_{\max,\theta})$ has a unique
decomposition
\[
    \hat u
    =
    (\mathcal A_{0,\theta})^{-1}f
    +
    \Pi_\theta\phi,
    \qquad
    f\in\mathcal H,
    \quad
    \phi\in\mathcal E .
\]
We set
\begin{equation}
\label{eq:Amax-definition}
    \mathcal A_{\max,\theta}
    \left(
        (\mathcal A_{0,\theta})^{-1}f
        +
        \Pi_\theta\phi
    \right)
    :=
f .
\end{equation}
Thus, formally, $\mathcal A_{\max,\theta}\hat u
    =
    -(\nabla+i\theta)\cdot (\nabla+i\theta)\hat u$
on $X^h_{\mathrm{cell},v}$ and $X^h_{\mathrm{cell},e}$. In the Lipschitz
Zaremba setting we use the definition \eqref{eq:Amax-definition}.

The first boundary map 
\begin{equation}
\label{eq:Gamma0}
 \Gamma_{0,\theta}:
    D(\mathcal A_{\max,\theta})
    \to
    \mathcal E, \qquad \textup{is defined by} \qquad 
    \Gamma_{0,\theta}
    \left(
        (\mathcal A_{0,\theta})^{-1}f
        +
        \Pi_\theta\phi
    \right)
    :=
    \phi .
\end{equation}
Thus $\Gamma_{0,\theta}$ records the common trace on the interface
$D_{\mathrm{cell}}$.

The second boundary map is defined on $D(\Gamma_{1,\theta})
    :=
    D(\mathcal A_{0,\theta})
    \dotplus
    \Pi_\theta D(\Lambda_\theta)$
by
\begin{equation}
\label{eq:Gamma1}
    \Gamma_{1,\theta}
    \left(
        (\mathcal A_{0,\theta})^{-1}f
        +
        \Pi_\theta\phi
    \right)
    :=
    (\Pi_\theta)^*f
    +
    \Lambda_\theta\phi ,
    \qquad
    f\in\mathcal H,
    \quad
    \phi\in D(\Lambda_\theta).
\end{equation}
The transposition interpretation of $(\Pi_\theta)^*$ and the negative
Dirichlet-to-Neumann convention give the following conormal representation. If $\hat u=\hat u_v\oplus \hat u_e
    \in D(\Gamma_{1,\theta})$ is sufficiently regular, then
\begin{equation}
\label{eq:Gamma1-flux}
    \Gamma_{1,\theta}\hat u
    =
    -
    \left(
        \left.
        (\nabla+i\theta)\hat u_v\cdot \nu_v
        \right|_{D_{\mathrm{cell}}}
        +
        \left.
        (\nabla+i\theta)\hat u_e\cdot \nu_e
        \right|_{D_{\mathrm{cell}}}
    \right).
\end{equation}
For later use, we also introduce the component boundary maps.  For
$j\in\{v,e\}$, write
\[
 D(\mathcal A_{\max,j,\theta})
 :=D(\mathcal A_{0,j,\theta})\dotplus\operatorname{Ran}\Pi_{j,\theta}.
\]
On this space define $\Gamma_{0,j,\theta}$ by $\Gamma_{0,j,\theta}
 \bigl(\mathcal A_{0,j,\theta}^{-1}f_j+\Pi_{j,\theta}\phi\bigr)
 :=\phi.$
On
$D(\mathcal A_{0,j,\theta})\dotplus
\Pi_{j,\theta}D(\Lambda_{j,\theta})$, define
\[
 \Gamma_{1,j,\theta}
 \bigl(\mathcal A_{0,j,\theta}^{-1}f_j+\Pi_{j,\theta}\phi\bigr)
 :=\Pi_{j,\theta}^*f_j+\Lambda_{j,\theta}\phi.
\]
On the common-trace domain the two component values of $\Gamma_0$
agree with $\Gamma_{0,\theta}$. The identity
$\Gamma_{1,\theta}=\Gamma_{1,v,\theta}+\Gamma_{1,e,\theta}$ holds as an
$\mathcal E$-valued equality for common traces in
$D(\Lambda_{v,\theta})\cap D(\Lambda_{e,\theta})$.

The maps $\Gamma_{0,\theta}$ and $\Gamma_{1,\theta}$ satisfy the abstract Green identity
\begin{equation}
\label{eq:abstract-green-identity}
\begin{aligned}
    \bigl(
        \mathcal A_{\max,\theta}\hat u,\hat v
    \bigr)_{\mathcal H}
    -
    \bigl(
        \hat u,\mathcal A_{\max,\theta}\hat v
    \bigr)_{\mathcal H}
    =
    \bigl(
        \Gamma_{1,\theta}\hat u,
        \Gamma_{0,\theta}\hat v
    \bigr)_{\mathcal E}
    -
    \bigl(
        \Gamma_{0,\theta}\hat u,
        \Gamma_{1,\theta}\hat v
    \bigr)_{\mathcal E},
    \qquad
    \hat u,\hat v\in D(\Gamma_{1,\theta}).
\end{aligned}
\end{equation}
Thus $\bigl(
        \mathcal A_{0,\theta},
        \Pi_\theta,
        \Lambda_\theta
    \bigr)$ is a Ryzhov triple, and $\bigl(
        \mathcal H,
        \mathcal E,
        \Gamma_{0,\theta},
        \Gamma_{1,\theta}
    \bigr)$ is the corresponding boundary structure.

\paragraph{The solution operator and the Weyl function.}

For $z\in\rho(\mathcal A_{0,\theta})$, define the solution operator
\begin{equation}
\label{eq:solution-operator}
    S_{\theta,z}
    :=
    \left(
        I-z(\mathcal A_{0,\theta})^{-1}
    \right)^{-1}
    \Pi_\theta .
\end{equation}
Equivalently, $S_{\theta,z}
    =
    \Pi_\theta
    +
    z
    (\mathcal A_{0,\theta}-z)^{-1}
    \Pi_\theta$. It satisfies $(\mathcal A_{\max,\theta}-z)S_{\theta,z}\phi=0$ and $\Gamma_{0,\theta}S_{\theta,z}\phi=\phi$. Hence $S_{\theta,z}\phi$ is the solution of the homogeneous interior
problem with prescribed common trace $\phi$ on the artificial interface. More
precisely, writing $S_{\theta,z}\phi
    =
    \hat u_v\oplus \hat u_e$,
one has
\begin{equation}\label{BVPsolutionop}
\begin{cases}
-(\nabla+i\theta)\cdot (\nabla+i\theta)\hat u_{v(e)}
-z\hat u_{v(e)}=0,
& \text{in } X_{\mathrm{cell},v(e)}^h,\\[2mm]
\hat u_v=\phi=\hat u_e,
& \text{on } D_{\mathrm{cell}},\\[2mm]
(\nabla+i\theta)\hat u_{v(e)}\cdot \nu_{v(e)}=0,
& \text{on }
\partial X_{\mathrm{cell},v(e)}^h\setminus D_{\mathrm{cell}}.
\end{cases}
\end{equation}

The Weyl function is defined by
\begin{equation}
\label{eq:Weyl-function}
    M_\theta(z)\phi
    :=
    \Gamma_{1,\theta}S_{\theta,z}\phi,
    \qquad
    \phi\in D(\Lambda_\theta).
\end{equation}
Using \eqref{eq:Gamma1} and \eqref{eq:solution-operator}, one obtains
\begin{equation}
\label{eq:Weyl-explicit}
    M_\theta(z)
    =
    \Lambda_\theta
    +
    z
    (\Pi_\theta)^*
    \left(
        I-z(\mathcal A_{0,\theta})^{-1}
    \right)^{-1}
    \Pi_\theta .
\end{equation}
Equivalently, $M_\theta(z)
    =
    M_{v,\theta}(z)
    +
    M_{e,\theta}(z)$, where, in the natural operator or form sense,
\[
    M_{v(e),\theta}(z)
    =
    \Lambda_{v(e),\theta}
    +
    z
    (\Pi_{v(e),\theta})^*
    \left(
        I-z(\mathcal A_{0,v(e),\theta})^{-1}
    \right)^{-1}
    \Pi_{v(e),\theta} .
\]
For sufficiently regular boundary data, $M_\theta(z)$ is the negative of
the total conormal derivative of the solution of the boundary-value problem \eqref{BVPsolutionop}.

\paragraph{The almost solvable extension and the transmission operator.}

We now identify the fibre transmission problem as an almost solvable extension
of $\mathcal A_{\max,\theta}$. Define
\[
    \mathcal A_{0,\mathbb I}^{\theta}
    :=
    \mathcal A_{\max,\theta}
    \upharpoonright
    \ker \Gamma_{1,\theta} ,
\qquad \textup{with} \qquad 
    D(\mathcal A_{0,\mathbb I}^{\theta})
    :=
    \left\{
        \hat u\in D(\Gamma_{1, \theta}):
        \Gamma_{1,\theta}\hat u=0
    \right\}.
\]
Since elements of $D(\mathcal A_{\max,\theta})$ are represented using a
common boundary datum $\phi\in L^2(D_{\mathrm{cell}})$, the continuity
condition $\hat u_v=\hat u_e$ on $D_{\mathrm{cell}}$ is encoded by $\Gamma_{0,\theta}$. For sufficiently regular functions, \eqref{eq:Gamma1-flux} identifies $\Gamma_{1,\theta}\hat u=0$ with the flux balance condition
\[
    (\nabla+i\theta)\hat u_v\cdot \nu_v
    +
    (\nabla+i\theta)\hat u_e\cdot \nu_e
    =
    0
    \qquad
    \text{on }D_{\mathrm{cell}}.
\]
To identify the operator domains, let
$u\in D(\mathcal A_\theta^h)$, set $f=\mathcal A_\theta^h u$, and
write $g\in H^{1/2}(D_{\mathrm{cell}})$ for its common interface trace.
Testing the weak formulation with functions of zero interface trace gives $u=\mathcal A_{0,\theta}^{-1}f+\Pi_\theta g.$
Testing next with $\Pi_\theta\psi$, for arbitrary
$\psi\in H^{1/2}(D_{\mathrm{cell}})$, and using the harmonicity of this
lift, we obtain $\lambda_\theta(g,\psi)=(\Pi_\theta^*f,\psi)_{\mathcal E}.$
By the definition of $\Lambda_\theta$, it follows that
$g\in D(\Lambda_\theta)$ and
$\Lambda_\theta g=-\Pi_\theta^*f$, so that $\Gamma_{1,\theta}u=0$.
Conversely, if $u=\mathcal A_{0,\theta}^{-1}f+\Pi_\theta g$ with
$g\in D(\Lambda_\theta)$ and $\Gamma_{1,\theta}u=0$, decomposing each
test function into a function of zero interface trace and a harmonic lift
gives the weak formulation for $\mathcal A_\theta^h u=f$.
Thus $\mathcal A_{0,\mathbb I}^{\theta}=\mathcal A_\theta^h$, with equality
of domains and actions.  We use the extension notation in the boundary-triple
analysis below.  In particular,
$(\mathcal A_{0,\mathbb I}^{\theta}-z)\hat u=\hat f$ is equivalent to the
fibre transmission problem \eqref{eq:transmission}.

By the Kre\u{\i}n resolvent formula, for $z\in \rho(\mathcal A_{0,\theta})
    \cap
    \rho(\mathcal A_{0,\mathbb I}^{\theta})$, one has
\begin{equation}
\label{eq:Krein-formula}
\begin{aligned}
    R_{0,\mathbb I}^{\theta}(z)
    &:=
    (\mathcal A_{0,\mathbb I}^{\theta}-z)^{-1}
    =
    (\mathcal A_{0,\theta}-z)^{-1}
    -
    S_{\theta,z}
    \bigl(M_\theta(z)\bigr)^{-1}
    \bigl(S_{\theta,\overline z}\bigr)^* .
\end{aligned}
\end{equation}
Since $\mathcal A_{0,\mathbb I}^{\theta}$ is self-adjoint and
non-negative, the formula applies in particular for $z\in K$. This represents the fibre resolvent of the Neumann Laplacian on the periodic
thin structure in terms of the Weyl function $M_\theta(z)$.

\section{Auxiliary estimates of the fibrewise Ryzhov triple on the vertex}

We establish the following three vertex estimates for the Weyl reduction:
\[
 \Vert\mathcal A_{0,v,\theta}^{-1}\Vert\leq Ch,
 \qquad \Vert\Pi_{v,\theta}\Vert\leq C,
 \qquad \mu_{v,\theta,2}\geq ch^{-1}.
\]
Here $\mu_{v,\theta,2}$ is the first positive eigenvalue of
$-\Lambda_{v,\theta}$. The preceding eigenvalue is zero and corresponds
to the rigid vertex mode retained in the effective model.
Under the vertex rescaling $x=b_v^h+r_hy$, $r_h=h^{2/3}$, the bulk
Rayleigh quotient acquires the factor $r_h^{-2}=h^{-4/3}$.
The estimates below quantify the effect of the shrinking contact plates
on the mixed problem and on the non-rigid boundary modes.

\subsection{Estimate for the decoupled Dirichlet operator}
\label{subsec:vertex-dirichlet-estimate}

The vertex lies strictly inside the periodicity cell, so multiplication by
$e^{i\theta\cdot x}$ defines unitary operators $U_\theta$ in
$L^2(X_{\mathrm{cell},v}^h)$ and $V_\theta$ in
$L^2(D_{\mathrm{cell}})$. The identity $(\nabla+i\theta)u=e^{-i\theta\cdot x}\nabla(U_\theta u)$
preserves the homogeneous mixed boundary conditions and gives
\begin{equation}\label{eq:vertex-gauge-identities}
 U_\theta\mathcal A_{0,v,\theta}U_\theta^{-1}=\mathcal A_{0,v,0},
 \qquad U_\theta\Pi_{v,\theta}V_\theta^{-1}=\Pi_{v,0},\qquad
 V_\theta\Lambda_{v,\theta}V_\theta^{-1}=\Lambda_{v,0}.
\end{equation}
Thus all three vertex estimates reduce to $\theta=0$.

Write
\[
 X_{\mathrm{cell},v}^h=b_v^h+r_hY_v^h,
 \quad D_{\mathrm{cell}}=b_v^h+r_hD^h,
 \quad r_h=h^{2/3},\quad \varepsilon_h=h/r_h=h^{1/3}.
\]
By Assumption~\ref{ass:standing-geometry}, $Y_v^h$ belongs to a uniformly
bi-Lipschitz family of domains of size one and
$|D^h|\asymp\varepsilon_h^2$. For $w\in H^1(Y_v^h)$, let
$c=|Y_v^h|^{-1}\int_{Y_v^h}w\,dy$. The trace embedding
$H^1(Y_v^h)\to L^4(\partial Y_v^h)$ and the Poincar\'e inequality have
uniform constants on this family. H\"older's inequality therefore gives
\begin{equation}\label{eq:vertex-window-trace}
 \Vert w-c\Vert_{L^2(D^h)}^2
 \leq |D^h|^{1/2}\Vert w-c\Vert_{L^4(\partial Y_v^h)}^2\leq C\varepsilon_h\Vert w-c\Vert_{H^1(Y_v^h)}^2
 \leq C\varepsilon_h\Vert\nabla w\Vert_{L^2(Y_v^h)}^2.
\end{equation}

If $w|_{D^h}=0$, the left-hand side equals $|c|^2|D^h|$, whence
$|c|^2\leq C\varepsilon_h^{-1}\Vert\nabla w\Vert_{L^2(Y_v^h)}^2$.
Another application of the Poincar\'e inequality yields
\[
 \Vert w\Vert_{L^2(Y_v^h)}^2
 \leq C\varepsilon_h^{-1}\Vert\nabla w\Vert_{L^2(Y_v^h)}^2,
 \qquad w\in H^1_{D^h}(Y_v^h).
\]
Hence the first mixed eigenvalue on the rescaled vertex is bounded below
by $c\varepsilon_h$, the small-window scale in
\cite{denzler1998bounds}. Scaling back and using
\eqref{eq:vertex-gauge-identities}, the first eigenvalue
$\lambda^h_{1,v,\theta}$ of $\mathcal A_{0,v,\theta}$ satisfies
$\lambda^h_{1,v,\theta}\geq cr_h^{-2}\varepsilon_h$. Consequently,
\begin{equation}\label{A0estimate}
 \Vert\mathcal A_{0,v,\theta}^{-1}\Vert_{\mathcal H_v\to\mathcal H_v}
 =\bigl(\lambda^h_{1,v,\theta}\bigr)^{-1}
 \leq Cr_h^2\varepsilon_h^{-1}=Ch,
 \qquad\theta\in Q'.
\end{equation}

\subsection{Estimate for the vertex lift operator}
\label{subsubsec:vertex-lift-estimate}

We use the gauge and rescaled geometry of
Subsection~\ref{subsec:vertex-dirichlet-estimate} to estimate
$\Pi_{v,\theta}$.

On $Y_v^h$, let $\mathcal A^0_{v,h}$ denote the mixed
Dirichlet--Neumann Laplacian associated with the form
\[
    a^0_{v,h}(u,\varphi)
    :=
    \int_{Y_v^h}
    \nabla u\cdot\overline{\nabla\varphi}\,dy,
    \qquad
    D(a^0_{v,h})
    =
    H^1_{D^h}(Y_v^h).
\]
Thus $\mathcal A^0_{v,h}u=-\Delta u$ in $Y_v^h$, with Dirichlet condition
on $D^h$ and homogeneous Neumann condition on
$\partial Y_v^h\setminus D^h$. The unitary rescaling $(T_hu)(y):=r_h^{3/2}u(b_v^h+r_hy)$
relates it to the physical operator by $T_h\mathcal A_{0,v,0}T_h^{-1}=r_h^{-2}\mathcal A^0_{v,h}.$

For smooth $g$ on $D^h$, let $\Pi^0_{v,0}g$ be the mixed harmonic lift
defined by
\[
\begin{cases}
    -\Delta \Pi^0_{v,0}g=0,
    & \text{in }Y_v^h,\\[0.3em]
    \Pi^0_{v,0}g=g,
    & \text{on }D^h,\\[0.3em]
    \partial_n \Pi^0_{v,0}g=0,
    & \text{on }\partial Y_v^h\setminus D^h .
\end{cases}
\]
As for the Ryzhov lift $\Pi_{v,\theta}$ in
\eqref{eq:transposition-definition-Pi}, we extend the rescaled lift
$\Pi^0_{v,0}$ to $L^2(D^h)$ by transposition: for every $f\in L^2(Y_v^h)$,
\begin{equation}
\label{eq:Pi-adjoint-conormal-rescaled}
    (\Pi^0_{v,0})^*f
    =
    -\left.
    \partial_{n}
    (\mathcal A^0_{v,h})^{-1}f
    \right|_{D^h}.
\end{equation}
Here $n$ points outwards from the rescaled vertex region, in accordance
with the normal convention for the Ryzhov triple.

\begin{theorem}[Uniform estimate for the vertex lift]
\label{thm:vertex-lift-estimate}
Assume the standing geometry in
Assumption~\ref{ass:standing-geometry}.  In particular, the rescaled contact
plates and collars satisfy the uniform size assumptions there, and the full
local mixed-boundary estimate
\eqref{eq:standing-zaremba-estimate} holds in every blown-up contact chart.
Then there exist constants $C>0$ and $h_0>0$, independent of
$\theta\in Q'$, such that, for $0<h<h_0$,
\begin{equation}
\label{eq:Pi-v-theta-uniform-bound}
    ||\Pi_{v,\theta}||_{L^2(D_{\mathrm{cell}})\to
    L^2(X_{\mathrm{cell},v}^h)}
    \le C .
\end{equation}
\end{theorem}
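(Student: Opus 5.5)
By the gauge identities \eqref{eq:vertex-gauge-identities}, $\Vert\Pi_{v,\theta}\Vert=\Vert\Pi_{v,0}\Vert$ for every $\theta\in Q'$, so it suffices to treat $\theta=0$ with constants independent of $h$. I would then pass to the rescaled vertex $Y_v^h$ via $x=b_v^h+r_hy$. If $g$ on $D_{\mathrm{cell}}$ corresponds to $\tilde g(y)=g(b_v^h+r_hy)$ on $D^h$, the harmonic lift commutes with the dilation. Comparing $\Vert\Pi_{v,0}g\Vert^2_{L^2}=r_h^3\Vert\Pi^0_{v,0}\tilde g\Vert^2_{L^2(Y_v^h)}$ with $\Vert g\Vert^2_{L^2(D_{\mathrm{cell}})}=r_h^2\Vert\tilde g\Vert^2_{L^2(D^h)}$ gives
\[
 \Vert\Pi_{v,0}\Vert=r_h^{1/2}\Vert\Pi^0_{v,0}\Vert_{L^2(D^h)\to L^2(Y_v^h)} .
\]
Since $r_h^{1/2}=h^{1/3}=\varepsilon_h$, the claim is equivalent to $\Vert\Pi^0_{v,0}\Vert\le C\varepsilon_h^{-1}$. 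This rate is sharp: a constant datum $c$ lifts to the constant $c$, and the ratio of norms is $|Y_v^h|^{1/2}/|D^h|^{1/2}\asymp\varepsilon_h^{-1}$. By duality and \eqref{eq:Pi-adjoint-conormal-rescaled}, I would therefore prove
\[
 \Vert\partial_n w\Vert_{L^2(D^h)}\le C\varepsilon_h^{-1}\Vert f\Vert_{L^2(Y_v^h)},\qquad w=(\mathcal A^0_{v,h})^{-1}f .
\]

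The first step is to obtain global energy bounds for $w$. The rescaled mixed Poincaré inequality of Subsection~\ref{subsec:vertex-dirichlet-estimate} gives $\Vert w\Vert^2\le C\varepsilon_h^{-1}\Vert\nabla w\Vert^2$. Testing the equation with $w$ gives $\Vert\nabla w\Vert^2\le\Vert f\Vert\,\Vert w\Vert$. Together these yield
\[
 \Vert\nabla w\Vert_{L^2(Y_v^h)}\le C\varepsilon_h^{-1/2}\Vert f\Vert,\qquad \Vert w\Vert_{L^2(Y_v^h)}\le C\varepsilon_h^{-1}\Vert f\Vert .
\]

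The second step is a local blow-up at each contact plate $D_j^h$, working at scale $\varepsilon_h$. Assumption~\ref{ass:standing-geometry}(3) provides a collar $\varepsilon_h U'_j$, and I set $W(z)=w(y_j+\varepsilon_h z)$ on a chart of unit size. The scalings are
\[
 \Vert\partial_n w\Vert_{L^2(D_j^h)}=\Vert\partial_nW\Vert_{L^2(\tilde D_j)},\qquad
 \Vert\Delta W\Vert_{L^2(U'_j)}\le\varepsilon_h^{1/2}\Vert f\Vert,\qquad
 \Vert\nabla W\Vert_{L^2(U'_j)}=\varepsilon_h^{-1/2}\Vert\nabla w\Vert_{L^2(\varepsilon_hU'_j)} .
\]
Combining \eqref{eq:standing-zaremba-estimate} with the trace theorem $H^{3/2+s_{\mathrm Z}}\to$ conormal trace in $L^2$ gives
\[
 \Vert\partial_nW\Vert_{L^2(\tilde D_j)}\le C\bigl(\Vert\Delta W\Vert_{L^2(U_j')}+\Vert W\Vert_{H^1(U_j')}\bigr).
\]
The gradient term is at most $C\varepsilon_h^{-1/2}\varepsilon_h^{-1/2}\Vert f\Vert=C\varepsilon_h^{-1}\Vert f\Vert$. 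For the $L^2$ term I would not use the crude global bound on $w$. Instead, I would use a Poincaré inequality on the blown-up collar with Dirichlet data on $\tilde D_j$, valid uniformly because the collars lie in a fixed Lipschitz class. This bounds $\Vert W\Vert_{L^2(U_j')}$ by $C\Vert\nabla W\Vert_{L^2(U_j')}$, which carries the same bound. Summing over the finitely many plates gives the required estimate.

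As a consistency check, the mean part of the flux is governed by Green's identity, $\int_{D^h}\partial_nw=-\int_{Y_v^h}f$. Spread over an area of order $\varepsilon_h^2$, this again gives $\varepsilon_h^{-1}\Vert f\Vert$. The main obstacle I expect is the uniformity of the local step. The Zaremba constant must be applied in charts that are unit-size only after blow-up by $\varepsilon_h$, and the volume-preserving bi-Lipschitz deformations $\mathcal T_v^h$ must not spoil it. Assumption~\ref{ass:standing-geometry}(3)--(4) is designed to provide exactly this. If the collar chart does not reach the Neumann junction curves cleanly, I would split $w=\chi w+(1-\chi)w$. I would then treat the commutator terms $2\nabla\chi\cdot\nabla w+w\Delta\chi$, which are of size $\varepsilon_h^{-1}$ and $\varepsilon_h^{-2}$ in the original scale, using the same collar Poincaré inequality.
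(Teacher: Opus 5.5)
Your proposal is correct and follows the paper's proof essentially step for step. The common steps are:
\begin{itemize}
\item the gauge reduction to $\theta=0$;
\item the dilation identity reducing the claim to $\Vert\Pi^0_{v,0}\Vert\le C\varepsilon_h^{-1}$;
\item duality with the conormal derivative of $w=(\mathcal A^0_{v,h})^{-1}f$;
\item the energy bound $\Vert\nabla w\Vert\le C\varepsilon_h^{-1/2}\Vert f\Vert$ from the small-window Poincar\'e inequality;
\item the blown-up Zaremba--trace estimate on each collar, with the collar Poincar\'e inequality (not the crude global $L^2$ bound) controlling the $H^1$ term.
\end{itemize}
Your sharpness remark (constant data) and your Green's-identity check of the mean flux are correct but not needed for the argument.
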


\begin{proof}
By \eqref{eq:vertex-gauge-identities}, it suffices to take $\theta=0$.

We first work on the rescaled vertex $Y_v^h$. Let
$f\in L^2(Y_v^h)$, and set $w:=(\mathcal A^0_{v,h})^{-1}f$.
Then
\[
    -\Delta w=f
    \quad\text{in }Y_v^h,
    \qquad
    w=0\quad\text{on }D^h,
    \qquad
    \partial_n w=0
    \quad\text{on }\partial Y_v^h\setminus D^h .
\]

We first prove the conormal estimate
\begin{equation}
\label{eq:conormal-rescaled-main}
    \left\Vert
    \partial_n(\mathcal A^0_{v,h})^{-1}f|_{D^h}
    \right\Vert_{L^2(D^h)}
    \le
    C\varepsilon_h^{-1}
    ||f||_{L^2(Y_v^h)} .
\end{equation}

Fix one contact component $D_j^h$, and consider its collar $U_j^h$. In the
blown-up variables $y=y_j^h+\varepsilon_h\widehat y$ and $\widehat w(\widehat y):=w(y_j^h+\varepsilon_h\widehat y)$, let $\widehat U_j$ denote the blown-up collar and $\widehat D_j$ the corresponding Dirichlet interface.

Let $\widehat U_j'$ be a smaller collar with the same contact face
$\widehat D_j$. Applying the standing local estimate
\eqref{eq:standing-zaremba-estimate} to $\widehat w$ in the
contact charts, and combining the estimates over a uniformly finite cover
of this smaller collar, gives, with $s=s_{\mathrm Z}>0$,
\begin{equation}
\label{eq:local-zaremba-H-regularity}
    ||\widehat w||_{H^{3/2+s}(\widehat U_j')}
    \le
    C\left(
        ||\Delta_{\widehat y}\widehat w||_{L^2(\widehat U_j)}
        +
        ||\widehat w||_{H^1(\widehat U_j)}
    \right),
\end{equation}
with $C$ independent of $h$ and $j$ by
Assumption~\ref{ass:standing-geometry}.  For related mixed regularity in
smooth dihedral charts, see \cite[Theorem~8.1]{dauge1992neumann}; the full
uniform estimate used here, including charts centred at corners or conical
points, is part of the standing assumption.

The trace theorem on the flat face $\widehat D_j$ gives $||\partial_{\widehat n}\widehat w||_{L^2(\widehat D_j)}
    \le
    C||\widehat w||_{H^{3/2+s}(\widehat U_j')}$. Combining this with \eqref{eq:local-zaremba-H-regularity}, we obtain $||\partial_{\widehat n}\widehat w||_{L^2(\widehat D_j)}
    \le
    C(||\Delta_{\widehat y}\widehat w||_{L^2(\widehat U_j)}
        +
        ||\widehat w||_{H^1(\widehat U_j)})$. Since $\widehat w=0$ on the Dirichlet interface $\widehat D_j$, the Poincaré
inequality in the blown-up collar yields $||\widehat w||_{L^2(\widehat U_j)}
    \le
    C||\nabla_{\widehat y}\widehat w||_{L^2(\widehat U_j)}$. Hence
\begin{equation}
\label{eq:local-zaremba-reference-derived}
    ||\partial_{\widehat n}\widehat w||_{L^2(\widehat D_j)}
    \le
    C\left(
        ||\nabla_{\widehat y}\widehat w||_{L^2(\widehat U_j)}
        +
        ||\Delta_{\widehat y}\widehat w||_{L^2(\widehat U_j)}
    \right).
\end{equation}

We return in \eqref{eq:local-zaremba-reference-derived} to the rescaled
vertex coordinates. Since $\nabla_{\widehat y}\widehat w
    =
    \varepsilon_h\nabla_y w$, $\Delta_{\widehat y}\widehat w
    =
    \varepsilon_h^2\Delta_y w$ and $dy=\varepsilon_h^3d\widehat y$, we have
\[
    ||\nabla_{\widehat y}\widehat w||_{L^2(\widehat U_j)}
    =
    \varepsilon_h^{-1/2}
    ||\nabla_y w||_{L^2(U_j^h)},
    \qquad 
    ||\Delta_{\widehat y}\widehat w||_{L^2(\widehat U_j)}
    =
    \varepsilon_h^{1/2}
    ||\Delta_y w||_{L^2(U_j^h)}.
\]
Moreover, $\partial_{\widehat n}\widehat w
    =
    \varepsilon_h\partial_n w$ and $d\widehat S=\varepsilon_h^{-2}dS$, and therefore $||\partial_{\widehat n}\widehat w||_{L^2(\widehat D_j)}
    =
    ||\partial_n w||_{L^2(D_j^h)}$. Thus
\[
    ||\partial_n w||_{L^2(D_j^h)}
    \le
    C\left(
        \varepsilon_h^{-1/2}
        ||\nabla w||_{L^2(U_j^h)}
        +
        \varepsilon_h^{1/2}
        ||\Delta w||_{L^2(U_j^h)}
    \right).
\]
Since the number of contact components is finite and independent of $h$,
and the collars $U_j^h$ have uniformly bounded overlap, summing over
$j=1,\dots,m$ gives
\begin{equation}
\label{eq:local-conormal-scaled}
    ||\partial_n w||_{L^2(D^h)}
    \le
    C\left(
        \varepsilon_h^{-1/2}
        ||\nabla w||_{L^2(Y_v^h)}
        +
        \varepsilon_h^{1/2}
        ||\Delta w||_{L^2(Y_v^h)}
    \right).
\end{equation}

We estimate the two terms on the right-hand side separately. Since
$w=(\mathcal A^0_{v,h})^{-1}f$, we have $||\Delta w||_{L^2(Y_v^h)}
    = ||f||_{L^2(Y_v^h)}$. For the gradient term, the rescaled mixed-eigenvalue bound from
Subsection~\ref{subsec:vertex-dirichlet-estimate} gives
\begin{equation}
\label{eq:A-inverse-rescaled}
    ||(\mathcal A^0_{v,h})^{-1}||_{L^2(Y_v^h)\to L^2(Y_v^h)}
    \le C\varepsilon_h^{-1}.
\end{equation}
The energy identity gives
\begin{equation}\label{eq:gradient-bound-rescaled}
    ||\nabla w||_{L^2(Y_v^h)}^2
    =
    (f,w)_{L^2(Y_v^h)}
    \le
    ||f||_{L^2(Y_v^h)}
    ||w||_{L^2(Y_v^h)}
    \le
    C\varepsilon_h^{-1}
    ||f||_{L^2(Y_v^h)}^2.
\end{equation}
Substituting \eqref{eq:gradient-bound-rescaled} and
$-\Delta w =f$ into \eqref{eq:local-conormal-scaled}, we obtain
\[
    ||\partial_n w||_{L^2(D^h)}
    \le
    C\left(
        \varepsilon_h^{-1}
        +
        \varepsilon_h^{1/2}
    \right)
    ||f||_{L^2(Y_v^h)}
    \le
    C\varepsilon_h^{-1}
    ||f||_{L^2(Y_v^h)} .
\]
This proves \eqref{eq:conormal-rescaled-main}.

By the adjoint identity \eqref{eq:Pi-adjoint-conormal-rescaled}, $||(\Pi^0_{v,0})^*f||_{L^2(D^h)}
    \le
    C\varepsilon_h^{-1}
    ||f||_{L^2(Y_v^h)}$. Therefore
\begin{equation}
\label{eq:rescaled-Pi-bound}
    ||\Pi^0_{v,0}||_{L^2(D^h)\to L^2(Y_v^h)}
    = ||(\Pi^0_{v,0})^*||_{L^2(Y_v^h)\to L^2(D^h)}
    \le
    C\varepsilon_h^{-1}.
\end{equation}

It remains to scale this estimate back to the physical vertex
$X_{\mathrm{cell},v}^h$. Let $g\in L^2(D_{\mathrm{cell}})$, and define
the rescaled boundary datum $\widehat g(y):=g(b_v^h+r_hy)$ for $y\in D^h$. If $u=\Pi_{v,0}g$ on $X_{\mathrm{cell},v}^h$, then $u(b_v^h+r_hy)=\Pi^0_{v,0}\widehat g(y)$. The $L^2$-norms scale as $||u||_{L^2(X_{\mathrm{cell},v}^h)}
    =
    r_h^{3/2}
    ||\Pi^0_{v,0}\widehat g||_{L^2(Y_v^h)}$ and $||g||_{L^2(D_{\mathrm{cell}})}
    =
    r_h
    ||\widehat g||_{L^2(D^h)}$. By \eqref{eq:rescaled-Pi-bound},
\[
\begin{aligned}
    ||\Pi_{v,0}g||_{L^2(X_{\mathrm{cell},v}^h)}
    &\le
    r_h^{3/2}
    C\varepsilon_h^{-1}
    ||\widehat g||_{L^2(D^h)}=
    C r_h^{1/2}\varepsilon_h^{-1}
    ||g||_{L^2(D_{\mathrm{cell}})}.
\end{aligned}
\]
Since $r_h=h^{2/3}, \varepsilon_h=h^{1/3}$, we have $r_h^{1/2}\varepsilon_h^{-1}=1$. Therefore $||\Pi_{v,0}||_{L^2(D_{\mathrm{cell}})\to
    L^2(X_{\mathrm{cell},v}^h)}
    \le C$. Finally, by the gauge equivalence at the beginning of the proof,
\[
    ||\Pi_{v,\theta}||_{L^2(D_{\mathrm{cell}})\to
    L^2(X_{\mathrm{cell},v}^h)}
    =
    ||\Pi_{v,0}||_{L^2(D_{\mathrm{cell}})\to
    L^2(X_{\mathrm{cell},v}^h)}
    \le C,
    \qquad
    \theta\in Q'.
\]
This proves \eqref{eq:Pi-v-theta-uniform-bound}.
\end{proof}

\subsection{Estimate for the Dirichlet-to-Neumann operator}

We next consider the vertex Dirichlet-to-Neumann operator $\Lambda_{v,\theta}$.
With the sign convention used in the Ryzhov triple, $\Lambda_{v,\theta}$ is
non-positive. Its Steklov eigenvalues are therefore defined by
\[
    -\Lambda_{v,\theta}\psi_{v,\theta,k}
    =
    \mu_{v,\theta,k}\psi_{v,\theta,k},
    \qquad
    0\le \mu_{v,\theta,1}\le \mu_{v,\theta,2}\le \cdots .
\]
Equivalently, $\mu_{v,\theta,k}$ are the eigenvalues of the mixed Steklov
problem
\begin{equation}
\label{eq:steklov-v}
\begin{cases}
-(\nabla+i\theta)\cdot(\nabla+i\theta)u=0,
& \text{in } X_{\mathrm{cell},v}^h,\\[0.4em]
(\nabla+i\theta)u\cdot n=\mu u,
& \text{on } D_{\mathrm{cell}},\\[0.4em]
(\nabla+i\theta)u\cdot n=0,
& \text{on } \partial X_{\mathrm{cell},v}^h\setminus D_{\mathrm{cell}}.
\end{cases}
\end{equation}

The lowest absolute Steklov eigenvalue is equal to zero for every
$\theta\in Q'$. Indeed, since $X_{\mathrm{cell},v}^h$ is an internal vertex
region in the periodic quotient, it does not carry any quasiperiodic matching
condition on the boundary of the periodicity cell. Hence the function $u_{v,\theta,1}(x)=e^{-i\theta\cdot x}$ satisfies $(\nabla+i\theta)u_{v,\theta,1}=0$ in $X_{\mathrm{cell},v}^h$, and therefore solves \eqref{eq:steklov-v} with $\mu=0$. Since
$X_{\mathrm{cell},v}^h$ is connected, this zero eigenspace is
one-dimensional.

We denote by $\psi_{v,\theta}
    :=
    |D_{\mathrm{cell}}|^{-1/2}
    e^{-i\theta\cdot x}\big|_{D_{\mathrm{cell}}}$ the normalised boundary trace of this lowest mode, and define the orthogonal projection
\begin{equation}\label{projdef0}
     \mathcal P_\theta g
    :=
    (g,\psi_{v,\theta})_{L^2(D_{\mathrm{cell}})}
    \psi_{v,\theta},
    \qquad
    g\in L^2(D_{\mathrm{cell}}).
\end{equation}
Thus $\operatorname{Ran}\mathcal P_\theta
    =
    \ker \Lambda_{v,\theta}$.  We put
\[
 \mathcal P_\theta^\perp:=I-\mathcal P_\theta,
 \qquad
 \hat{\mathcal E}_\theta:=\mathcal P_\theta\mathcal E,
 \qquad
 \breve{\mathcal E}_\theta:=\mathcal P_\theta^\perp\mathcal E.
\]
The zero Steklov mode is the resonant vertex degree of freedom retained in
the effective model. In reducing the Weyl function, we therefore split off $\hat{\mathcal E}_\theta$ and estimate the inverse of the Dirichlet-to-Neumann operator only on
$\breve{\mathcal E}_\theta$.  The relevant quantity is the first non-zero
Steklov eigenvalue.
    
\begin{theorem}\label{thm:DtN-gap}
Assume the uniform rescaled-vertex and contact geometry of
Theorem~\ref{thm:vertex-lift-estimate} and
Lemma~\ref{lem:boundary-poincare-trace-shrinking-interface} below.  In
particular,
let the vertex diameter satisfy $r_h\asymp h^{2/3}$, and let the contact
patches have physical diameter of order $h$, so that their diameter after
vertex rescaling is $\varepsilon_h=h/r_h\asymp h^{1/3}$.  Then there
exist constants $c>0$ and $h_0>0$, independent of
$\theta\in Q'$, such that, for $0<h<h_0$, one has $\mu_{v,\theta,2}\geq c h^{-1}.$
Equivalently,
\[
\bigl\Vert(\mathcal P_\theta^\perp
\Lambda_{v,\theta}\mathcal P_\theta^\perp)^{-1}\bigr\Vert_
{\breve{\mathcal E}_\theta\to\breve{\mathcal E}_\theta}
\leq Ch,
\]
where $\mathcal P_\theta$ is the orthogonal projection onto
$\ker\Lambda_{v,\theta}$.
\end{theorem}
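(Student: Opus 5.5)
By the gauge identities \eqref{eq:vertex-gauge-identities}, $V_\theta\Lambda_{v,\theta}V_\theta^{-1}=\Lambda_{v,0}$, and $V_\theta$ maps $\psi_{v,\theta}$ to the normalised constant $|D_{\mathrm{cell}}|^{-1/2}$. Hence the Steklov spectra coincide for all $\theta$, and it suffices to treat $\theta=0$. There $\ker\Lambda_{v,0}$ consists of the constants on $D_{\mathrm{cell}}$. By the min--max principle for the closed form $\lambda_{v,0}$,
\[
 \mu_{v,0,2}=\inf\Bigl\{\frac{\lambda_{v,0}(g,g)}{\Vert g\Vert^2_{L^2(D_{\mathrm{cell}})}}:\ 0\neq g\in H^{1/2}(D_{\mathrm{cell}}),\ \int_{D_{\mathrm{cell}}}g\,dS=0\Bigr\}.
\]
Since the harmonic lift minimises the Dirichlet energy among functions with trace $g$, the claim $\mu_{v,0,2}\geq ch^{-1}$ is equivalent to a boundary Poincar\'e inequality. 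For every $u\in H^1(X^h_{\mathrm{cell},v})$ whose trace on $D_{\mathrm{cell}}$ has zero mean, we need
\[
 \Vert u\Vert_{L^2(D_{\mathrm{cell}})}^2\leq Ch\,\Vert\nabla u\Vert^2_{L^2(X^h_{\mathrm{cell},v})}.
\]
This is presumably the content of Lemma~\ref{lem:boundary-poincare-trace-shrinking-interface}. I would prove it directly in the rescaled picture.

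\textbf{Step 1 (rescaling).} Put $x=b_v^h+r_hy$ and $w(y)=u(b_v^h+r_hy)$. Then
\[
\Vert u\Vert^2_{L^2(D_{\mathrm{cell}})}=r_h^2\Vert w\Vert^2_{L^2(D^h)},\qquad \Vert\nabla u\Vert^2_{L^2}=r_h\Vert\nabla_y w\Vert^2_{L^2(Y_v^h)}.
\]
The target therefore becomes $\Vert w\Vert^2_{L^2(D^h)}\leq C\varepsilon_h\Vert\nabla w\Vert^2_{L^2(Y^h_v)}$ for traces of zero mean on $D^h$, because $h r_h^{-1}=\varepsilon_h$.

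\textbf{Step 2 (trace estimate).} Let $c$ be the volume mean of $w$ over $Y_v^h$. Estimate \eqref{eq:vertex-window-trace}, which combines the uniform $H^1\to L^4(\partial Y_v^h)$ trace embedding, H\"older's inequality and $|D^h|\asymp\varepsilon_h^2$, gives $\Vert w-c\Vert^2_{L^2(D^h)}\leq C\varepsilon_h\Vert\nabla w\Vert^2$. The trace of $w$ has zero mean on $D^h$, so it is the $L^2(D^h)$-orthogonal projection of $w-c$ onto mean-zero functions. Therefore
\[
\Vert w\Vert_{L^2(D^h)}\leq\Vert w-c\Vert_{L^2(D^h)}.
\]
Combining these two bounds gives the required inequality with uniform constants, hence $\mu_{v,\theta,2}\geq c\,h\,r_h^{-1}\cdot r_h^{-1}\cdot(\ldots)$. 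Tracking the scaling exactly, the Rayleigh quotient scales by $r_h^{-1}$ and the rescaled quotient is at least $c\varepsilon_h^{-1}$, so $\mu\geq c r_h^{-1}\varepsilon_h^{-1}=ch^{-1}$.

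\textbf{Step 3 (resolvent bound).} Since $\Lambda_{v,\theta}$ is self-adjoint with compact resolvent (Theorem~\ref{thm:lambda-coercivity-compact-resolvent}) and $\mathcal P_\theta$ is its spectral projection onto the zero eigenspace, the compression $\mathcal P^\perp_\theta\Lambda_{v,\theta}\mathcal P^\perp_\theta$ on $\breve{\mathcal E}_\theta$ has spectrum in $(-\infty,-\mu_{v,\theta,2}]$. The norm of its inverse is therefore $\mu_{v,\theta,2}^{-1}\leq Ch$.

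\textbf{Main obstacle.} The delicate point is uniformity of the $L^4$ trace constant and of the Poincar\'e constant over the deforming family $Y_v^h$. These should follow from the uniform bi-Lipschitz and volume-preserving hypotheses in Assumption~\ref{ass:standing-geometry}, via pullback to $X_v^0$, since bi-Lipschitz maps preserve $H^1$ norms and boundary measures up to constants. I would check this pullback carefully, using that $D^h$ is a subset of $\partial Y_v^h$, to which the trace embedding applies directly.
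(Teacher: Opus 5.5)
Your proposal is correct and follows the paper's proof essentially step for step. Both arguments reduce to $\theta=0$ by the gauge identities, characterise $\mu_{v,0,2}$ by the Rayleigh quotient on traces orthogonal to constants, rescale by $r_h$, and derive the boundary Poincar\'e--trace bound from \eqref{eq:vertex-window-trace}, finishing with the spectral theorem. Your projection inequality $\Vert w\Vert_{L^2(D^h)}\leq\Vert w-c\Vert_{L^2(D^h)}$ is exactly the paper's identity $\Vert w-c\Vert_{L^2(D^h)}^2=\Vert w\Vert_{L^2(D^h)}^2+|c|^2|D^h|$. The uniformity of the trace and Poincar\'e constants that you flag as the main obstacle is simply taken as part of Assumption~\ref{ass:standing-geometry}(2) in the paper.
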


\begin{proof}
\smallskip
\noindent
\emph{Step 1. Gauge reduction.}

By \eqref{eq:vertex-gauge-identities},
$\mu_{v,\theta,k}=\mu_{v,0,k}$ for all $k\geq1$.
It is therefore enough to estimate the second eigenvalue for $\theta=0$.

\smallskip
\noindent
\emph{Step 2. Variational characterisation of the first non-zero Steklov eigenvalue.}
\nopagebreak[4]

For $\theta=0$, the first Steklov eigenvalue equals $0$, and the corresponding eigenfunctions are the constants. Consequently, the second eigenvalue is characterised by the Rayleigh quotient on the subspace orthogonal to constants on the interface:
\begin{equation}\label{eq:minmax-steklov-rewrite}
\mu_{v,0,2}
=
\inf_{\substack{u\in H^1(X^h_{\textup{cell},v})\\
\Vert u|_{D_{\textup{cell}}}\Vert_{L^2}>0,\ \int_{D_{\textup{cell}}} u\,dS=0}}
\frac{\displaystyle \int_{X^h_{\textup{cell},v}} |\nabla u|^2\,dx}
{\displaystyle \int_{D_{\textup{cell}}} | u|^2\,dS}.
\end{equation}
Indeed, once the trace on $D_{\textup{cell}}$ is prescribed, the harmonic extension minimises the Dirichlet energy in the bulk. Thus \eqref{eq:minmax-steklov-rewrite} follows from the min--max principle
for the mixed Steklov problem.

\smallskip
\noindent
\emph{Step 3. Rescaling the vertex and tracking the interface.}

Use the rescaled domains $Y_v^h,D^h$ and the parameters
$r_h,\varepsilon_h$ introduced in
Subsection~\ref{subsec:vertex-dirichlet-estimate}.
Under the change of variables $x=b_v^h+r_hy$,
$u(b_v^h+r_hy)=w(y)$, the Dirichlet energy and the boundary $L^2$ norm
transform as
\[
\int_{X^h_{\textup{cell},v}} |\nabla u|^2\,dx
=
r_h \int_{Y^h_v} |\nabla_y w|^2\,dy,
\qquad 
\int_{D_{\textup{cell}}} | u|^2\,dS_x
=
r_h^{2} \int_{D^h} | w|^2\,dS_y.
\]
Therefore the min--max formula becomes
\begin{equation}\label{eq:scaled-minmax-rewrite}
\mu_{v,0,2}
=
r_h^{-1}
\inf_{\substack{w\in H^1(Y^h_v)\\
\Vert w|_{D^h}\Vert_{L^2}>0,\ \int_{D^h} w\,dS=0}}
\frac{\displaystyle \int_{Y^h_v} |\nabla w|^2\,dy}
{\displaystyle \int_{D^h} | w|^2\,dS}.
\end{equation}
\smallskip
\noindent
\emph{Step 4. Boundary Poincar\'e--trace estimate on the shrinking interface.}

The following consequence of \eqref{eq:vertex-window-trace} controls the
denominator in \eqref{eq:scaled-minmax-rewrite}.

\begin{lemma}[Boundary Poincaré--trace estimate on the shrinking interface]
\label{lem:boundary-poincare-trace-shrinking-interface}
Let $Y_v^h\subset\mathbb R^3$ and
$D^h=\bigcup_{j=1}^mD_j\subset\partial Y_v^h$ be the rescaled vertex and
contact set supplied by Assumption~\ref{ass:standing-geometry}, and let
$\varepsilon_h=h^{1/3}$.  Thus the number of connected plates is fixed,
their diameters and surface measures are comparable with
$\varepsilon_h$ and $\varepsilon_h^2$, respectively, and they admit the
uniform flat collars specified in that assumption.

Then there exists a constant $C>0$, independent of $h$, such that
\begin{equation}
\label{eq:boundary-poincare-rewrite}
    \left\Vert
        w|_{D^h}
    \right\Vert_{L^2(D^h)}^2
    \le
    C\varepsilon_h
    \Vert\nabla w\Vert_{L^2(Y_v^h)}^2,
    \qquad
    w\in H^1(Y_v^h),
    \qquad
    \int_{D^h} w\,dS=0 .
\end{equation}
Here $w|_{D^h}$ is understood in the trace sense.
\end{lemma}

\begin{proof}
Let $c$ be the volume average of $w$ on $Y_v^h$. Since
$\int_{D^h}w\,dS=0$,
\[
 \Vert w-c\Vert_{L^2(D^h)}^2
 =\Vert w\Vert_{L^2(D^h)}^2+|c|^2|D^h|.
\]
Combining this identity with \eqref{eq:vertex-window-trace} proves
\eqref{eq:boundary-poincare-rewrite}. The argument applies to the whole
contact set and does not require its connectedness.
\end{proof}

We return to the quotient in \eqref{eq:scaled-minmax-rewrite}. For every
admissible $w$ satisfying $\int_{D^h}w\,dS=0$, Lemma~\ref{lem:boundary-poincare-trace-shrinking-interface} implies
\[
    \frac{
        \displaystyle
        \int_{Y_v^h}|\nabla w|^2\,dy
    }{
        \displaystyle
        \int_{D^h}|w|^2\,dS
    }
    \ge
    c\varepsilon_h^{-1}.
\]
Substituting this estimate into \eqref{eq:scaled-minmax-rewrite}, we obtain
$\mu_{v,0,2}
    \ge
    c\,r_h^{-1}\varepsilon_h^{-1}$. Since $r_h=h^{2/3}$ and $\varepsilon_h=h^{1/3}$, this gives $\mu_{v,0,2}
    \ge
    c h^{-1}$. The gauge equivalence gives the same bound uniformly in
$\theta$. The spectral theorem on the orthogonal complement of the zero
mode now yields
\[
\bigl\Vert(\mathcal P_\theta^\perp \Lambda_{v,\theta}\mathcal P_\theta^\perp)^{-1}\bigr\Vert_{\breve{\mathcal E}_\theta\to\breve{\mathcal E}_\theta}
=
\mu_{v,\theta,2}^{-1}
\le Ch,
\]
as claimed.
\end{proof}

\section{Reduction of the Weyl function and effective fibre operator}

We now pass from the fibrewise Ryzhov representation to the effective periodic
graph model.  For each quasi-momentum $\theta\in Q'$, Kre\u{\i}n's formula
gives
\[
   (\mathcal A^\theta_{0,\mathbb I}-z)^{-1}
   =
   (\mathcal A_{0,\theta}-z)^{-1}
   -S_{\theta,z}M_\theta(z)^{-1}S_{\theta,\bar z}^*.
\]
We reduce $M_\theta(z)^{-1}$ to the rigid vertex channel, uniformly in $h$, $\theta$ and $z\in K$. We first establish
the edge estimates and control the Weyl blocks, then realise the compressed
resolvent on $\mathcal H_e\oplus\mathbb C$. Restriction to the transverse ground modes then gives the graph fibres. The final subsections derive the spectral
consequences and reconstruct the global operators.

\subsection{Uniform estimates for the edge Ryzhov triple}
\label{step:edge-ryzhov-estimates}

Each edge component of the periodic quotient is represented by the
unfolded cylinder
\[
 X^h_{\mathrm{cell},e}\simeq(0,l_e)\times h\omega_e,
 \qquad 0<l_-\leq l_e\leq l_+<\infty.
\]
Its two end sections are the edge--vertex contact plates. A local gauge
removes the transverse magnetic part. The longitudinal gauge used below multiplies endpoint values by unit phases,
which must be retained in the coupled fibre problem.

\begin{theorem}[Uniform inverse bound for the decoupled edge operator]
\label{lem:edge-A0-inverse}
On $X^h_{\mathrm{cell},e}
    \simeq
    (0,l_e)\times h\omega_e$, there exist constants $C>0$ and
$h_0>0$, independent of $e$ and $\theta\in Q'$, such that, for
$0<h<h_0$,
\[
    \Bigl\Vert
        \left(\mathcal A_{0,e,\theta}\right)^{-1}
    \Bigr\Vert_{L^2(X_{\mathrm{cell},e}^h)\to L^2(X_{\mathrm{cell},e}^h)}
    \leq C .
\]
\end{theorem}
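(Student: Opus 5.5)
The plan is to bound the first eigenvalue of $\mathcal A_{0,e,\theta}$ from below, uniformly in $h$, $e$ and $\theta$. Since the operator is self-adjoint and non-negative, $\Vert\mathcal A_{0,e,\theta}^{-1}\Vert=(\lambda_{1,e,\theta}^h)^{-1}$. By Assumption~\ref{ass:standing-geometry}(5) the edge part of the periodic quotient is a disjoint union of finitely many cylinders $(0,l_e)\times h\omega_e$, with Dirichlet conditions on both end sections $D^h_{e,\pm}$ and magnetic Neumann conditions on the lateral boundary. It therefore suffices to treat a single cylinder. The periodic identifications across cell faces play no role once the edge is unfolded, because the unfolded edge is an interval of length $l_e$ in the axial variable.

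\emph{Gauge.} First remove the magnetic potential. On the unfolded cylinder, parametrised by $x=x^h_{e,-}+s t_e+\text{(transverse)}$, the function $e^{i\theta\cdot x}$ is a well-defined unimodular multiplier, since the cylinder is simply connected after unfolding. As in \eqref{eq:vertex-gauge-identities}, conjugation by this multiplier maps $\mathcal A_{0,e,\theta}$ unitarily onto $\mathcal A_{0,e,0}$. The reason is that the Dirichlet condition on the end sections is invariant under multiplication by a unit phase, and the lateral magnetic Neumann condition becomes the ordinary Neumann condition. The endpoint phases that matter in the coupled problem are irrelevant here, because the data vanish on $D_{\mathrm{cell}}$. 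This reduces the claim to $\theta=0$.

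\emph{Separation of variables.} For $\theta=0$ and $u\in H^1$ vanishing at $s=0$ and $s=l_e$, I will use the one-dimensional Poincar\'e inequality on each transverse fibre:
\[
\int_0^{l_e}|u(s,y)|^2ds\le \frac{l_e^2}{\pi^2}\int_0^{l_e}|\partial_su(s,y)|^2ds,\qquad y\in h\omega_e.
\]
This inequality requires only the absolute continuity of $u(\cdot,y)$ for a.e.\ $y$ and the vanishing of its endpoint traces, both of which follow from the product structure and Fubini. Integrating over $y\in h\omega_e$ and discarding the transverse gradient gives
\[
\Vert u\Vert_{L^2}^2\le (l_+/\pi)^2\Vert\nabla u\Vert_{L^2}^2 .
\]
Hence $\lambda_{1,e,0}^h\ge \pi^2/l_+^2$, and this is in fact an equality. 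The bound is independent of $h$ because the cross-section scale enters neither side. It is independent of $e$ because $l_e\le l_+$.

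Combining the two steps gives $\Vert\mathcal A_{0,e,\theta}^{-1}\Vert\le l_+^2/\pi^2=:C$. The only point needing care, and the step I would check most closely, is the gauge step: the multiplier must be globally defined on each unfolded edge, and the quotient identification must not impose a quasi-periodic constraint inside the edge. Assumption~\ref{ass:standing-geometry} places both end sections on $D_{\mathrm{cell}}$, so each edge is a full cylinder between Dirichlet ends. As a fallback, the diamagnetic inequality $|\nabla|u||\le|(\nabla+i\theta)u|$ applied to $|u|$ also yields the bound directly without any gauge: $|u|$ still vanishes on the end sections, and the same Poincar\'e argument applies to it.
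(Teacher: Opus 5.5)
Your proof is correct and is essentially the paper's argument. The paper also removes the magnetic potential on each straight cylinder, first the transverse phase and then the longitudinal factor $e^{i(t_e\cdot\theta)s}$ (together these amount to your multiplier $e^{i\theta\cdot x}$ up to a constant phase), and then integrates the one-dimensional Dirichlet Poincar\'e inequality over $h\omega_e$ to obtain the constant $l_+^2/\pi^2$. One small slip: the remark that the bound is an equality should read $\lambda^h_{1,e,0}=\pi^2/l_e^2$, which equals $\pi^2/l_+^2$ only when $l_e=l_+$, but only the inequality is used.
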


\begin{proof}
After removing the transverse magnetic part, set
$v(s,y)=e^{i(t_e\cdot\theta)s}u(s,y)$. The zero endpoint traces are
preserved and
$\partial_s v=e^{i(t_e\cdot\theta)s}(\partial_s+it_e\cdot\theta)u$.
The one-dimensional Poincar\'e inequality, integrated over $h\omega_e$,
therefore gives
\[
 \Vert u\Vert_{L^2(X^h_{\mathrm{cell},e})}^2
 \leq \frac{l_+^2}{\pi^2}
       \Vert(\partial_s+it_e\cdot\theta)u\Vert_{L^2}^2
 \leq C a_{0,e,\theta}(u,u).
\]
This is the required uniform lower bound for $\mathcal A_{0,e,\theta}$.
\end{proof}

We next estimate the edge Poisson lift.

\begin{theorem}[Uniform bound for the edge Poisson lift]
\label{lem:edge-Pi-bound}
Let
$X^h_{\mathrm{cell},e}\simeq(0,l_e)\times h\omega_e$ be a straight
cylinder, where $0<l_-\leq l_e\leq l_+<\infty$.  Its two end sections
\[
 D_{e,-}^h:=\{0\}\times h\omega_e,
 \qquad
 D_{e,+}^h:=\{l_e\}\times h\omega_e
\]
are the edge--vertex contact plates of the unfolded edge; put
$D_e^h:=D_{e,-}^h\cup D_{e,+}^h$.  There exist constants $C>0$ and
$h_0>0$, independent of $e$ and $\theta\in Q'$, such that, for
$0<h<h_0$, the Poisson lift on this cylinder satisfies
\[
    ||\Pi_{e,\theta} g||_{L^2(X^h_{\mathrm{cell},e})}
    \leq
    C
    ||g||_{L^2(D_e^h)},
    \qquad
    g\in L^2(D_e^h).
\]
Consequently, after summing over the finitely many edge components in the
cell, the full edge lift satisfies
$
 ||\Pi_{e,\theta}g||_{\mathcal H_e}
 \leq C||g||_{L^2(D_{\mathrm{cell}})}
$.
\end{theorem}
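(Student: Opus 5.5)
We follow the vertex argument of Theorem~\ref{thm:vertex-lift-estimate}: bound the adjoint $B_{e,\theta}=\Pi_{e,\theta}^*$, which maps $f\mapsto -(\nabla+i\theta)(\mathcal A_{0,e,\theta})^{-1}f\cdot\nu_e|_{D_e^h}$, from $L^2(X^h_{\mathrm{cell},e})$ to $L^2(D_e^h)$ uniformly in $h$, $e$ and $\theta$. The cylinder is an exact product, so no Zaremba input is needed. First we remove the magnetic potential by a gauge. Writing $\theta=(\theta\cdot t_e)t_e+\theta_\perp$, multiplication by $e^{i\theta\cdot x}$ is well defined on the unfolded cylinder, because the edge is treated as a single cylinder with two distinct end sections. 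It is unitary on $L^2(X^h_{\mathrm{cell},e})$ and on $L^2(D_e^h)$, and it conjugates $\mathcal A_{0,e,\theta}$, $\Pi_{e,\theta}$ and $B_{e,\theta}$ to their $\theta=0$ versions, exactly as in \eqref{eq:vertex-gauge-identities}. The endpoint phases only change $g$ by unimodular factors and do not affect norms. Hence it suffices to treat $\theta=0$ on $(0,l_e)\times h\omega_e$ with Dirichlet conditions at $s=0,l_e$ and Neumann conditions on the lateral boundary.

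Next we separate variables. Let $\{\phi_k\}$ be the $L^2(\omega_e)$-normalised Neumann eigenfunctions of $\omega_e$, with eigenvalues $0=\nu_0<\nu_1\le\cdots$. The scaled functions $h^{-1}\phi_k(y/h)$ then give an orthonormal basis of $L^2(h\omega_e)$, with eigenvalues $h^{-2}\nu_k$. Expanding $f=\sum_k f_k(s)h^{-1}\phi_k(y/h)$ splits $w=\mathcal A_{0,e,0}^{-1}f$ into independent one-dimensional problems
\[
 -w_k''+h^{-2}\nu_k w_k=f_k\ \text{ on }(0,l_e),\qquad w_k(0)=w_k(l_e)=0 .
\]
The endpoint trace norm is then
\[
 \Vert\partial_s w\Vert^2_{L^2(D_e^h)}=\sum_k\bigl(|w_k'(0)|^2+|w_k'(l_e)|^2\bigr).
\]
It therefore suffices to prove $|w_k'(0)|+|w_k'(l_e)|\le C\Vert f_k\Vert_{L^2(0,l_e)}$ with $C$ independent of $k$ and $h$.

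For the mode estimate, set $\kappa=h^{-1}\sqrt{\nu_k}\ge0$. The Green's function representation gives $w_k'(0)=\int_0^{l_e}\frac{\sinh(\kappa(l_e-s))}{\sinh(\kappa l_e)}f_k(s)\,ds$, and similarly at $s=l_e$; for $\kappa=0$ the kernel is $(l_e-s)/l_e$. The kernel lies in $[0,1]$, so Cauchy--Schwarz yields $|w_k'(0)|\le l_+^{1/2}\Vert f_k\Vert$. In fact, for large $\kappa$ the bound improves to $C\kappa^{-1/2}\Vert f_k\Vert$, though this is not needed. Alternatively one can use the one-dimensional trace inequality $|w'(0)|^2\le C(\Vert w'\Vert^2+\Vert w''\Vert^2)$ together with energy bounds, but the explicit kernel avoids any loss in $\kappa$. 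Summing over $k$ gives $\Vert B_{e,0}f\Vert_{L^2(D_e^h)}\le 2l_+^{1/2}\Vert f\Vert$, hence $\Vert\Pi_{e,\theta}\Vert=\Vert B_{e,\theta}\Vert\le C$. Summing over the finitely many edges in the cell then gives the consequence.

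The main point needing care is the gauge step. We must check that the longitudinal phase $e^{i(\theta\cdot t_e)s}$ acts on the unfolded cylinder with independent endpoint traces, so that it is a legitimate unitary on $L^2(D_e^h)$ and commutes with the transposition definition of $\Pi_{e,\theta}$. This is where the convention of keeping the incidences $(e,\pm)$ distinct is used, and the conjugation has to be verified for $B_{e,\theta}$ on $H^1$-smooth data first and then extended by density. The remaining mode computation is uniform, because the kernel bound does not depend on $\kappa$.
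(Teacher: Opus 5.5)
Your proof is correct and is essentially the paper's argument taken on the adjoint side. The paper gauges away $\theta$, expands $\Pi_{e,\theta}g$ in the transverse Neumann modes and bounds the explicit affine and $\sinh$ profiles directly. You instead bound $B_{e,\theta}=\Pi_{e,\theta}^*$ mode by mode, and your Green kernel for $w_k'(0)$ is exactly the paper's Poisson profile $\sinh(q_k(l_e-s))/\sinh(q_kl_e)$.

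The adjoint formulation matches the transposition definition of $\Pi_{e,\theta}$ from Section~3. Your pointwise bound of the kernel by $1$ gives the constant $(2l_+)^{1/2}$ for every $h>0$. This avoids the smallness of $h$ that the paper uses to compare the $\sinh$ quotients with exponentials.
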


\begin{proof}
Take smooth endpoint data $g$ and write $\hat u_e=\Pi_{e,\theta}g$.
In cylinder coordinates, put $\theta_e=t_e\cdot\theta$ and
$\theta_{\perp,e}=\theta-\theta_et_e$. The transverse gauge
$\widetilde u_e=e^{i\theta_{\perp,e}\cdot y}\hat u_e$ transforms the
lateral magnetic Neumann condition into the ordinary Neumann condition.
Expand the gauged solution and endpoint data in the orthonormal Neumann
eigenbasis $\{\chi_k^h\}_{k\geq0}$ of $h\omega_e$:
\[
 \widetilde u_e(s,y)=\sum_{k\geq0}u_k(s)\chi_k^h(y),
 \quad u_k(0)=a_k,\quad u_k(l_e)=b_k,
 \quad \mu_0^h=0,\quad \mu_k^h=h^{-2}\mu_k\ (k\geq1).
\]
This gauge is unitary, so Parseval's identity gives
$\sum_k(|a_k|^2+|b_k|^2)=\Vert g\Vert_{L^2(D_e^h)}^2$.
The longitudinal gauge $v_k(s)=e^{i\theta_es}u_k(s)$ yields
\[
 -v_k''+\mu_k^h v_k=0,
 \qquad v_k(0)=a_k,\quad v_k(l_e)=e^{i\theta_el_e}b_k.
\]
For $k=0$, the solution is affine, and hence
\[
 \Vert v_0\Vert_{L^2(0,l_e)}^2
 \leq C(|a_0|^2+|b_0|^2).
\]
For $k\geq1$, put $q_k=\sqrt{\mu_k^h}$. Then
\[
 v_k(s)=a_k\frac{\sinh(q_k(l_e-s))}{\sinh(q_kl_e)}
       +e^{i\theta_el_e}b_k\frac{\sinh(q_ks)}{\sinh(q_kl_e)}.
\]
Since $q_kl_e\geq l_-\sqrt{\mu_1}/h$, the two coefficients are
bounded by $Ce^{-q_ks}$ and $Ce^{-q_k(l_e-s)}$, respectively, for
all sufficiently small $h$. Therefore
\[
 \Vert v_k\Vert_{L^2(0,l_e)}^2
 \leq Cq_k^{-1}(|a_k|^2+|b_k|^2)
 \leq C(|a_k|^2+|b_k|^2).
\]
Summing over $k$ and applying Parseval's identity proves the estimate
on one cylinder. Density extends it to $L^2$ endpoint data, and summing
over the fixed finite set of cylinders gives the full edge-lift bound.
\end{proof}

\subsection{Low--high boundary decomposition and invertibility of the compressed Weyl block}

Recall the orthogonal projection $\mathcal P_\theta$ defined in
\eqref{projdef0}. It is the spectral projection of the vertex Dirichlet-to-Neumann operator $\Lambda_{v,\theta}$ onto its one-dimensional kernel $\ker\Lambda_{v,\theta}$. Since the reference vertex is represented by $v=0$, this eigenspace is generated by the normalised boundary trace $\psi_{v,\theta}
    =
    |D_{\mathrm{cell}}|^{-1/2}
    e^{-i\theta\cdot x}\big|_{D_{\mathrm{cell}}}$. 
    
With the notation introduced in Section~4.3, the boundary space decomposes as
\[
    \mathcal E=L^2(D_{\mathrm{cell}})
    =\hat{\mathcal E}_\theta\oplus\breve{\mathcal E}_\theta.
\]
In particular, one has $\hat{\mathcal E}_\theta
    =
    \operatorname{span}\{\psi_{v,\theta}\},$ $\mathcal P_\theta\Lambda_{v,\theta}\mathcal P_\theta=0.$
With respect to this decomposition, the Weyl function has the block
representation
\[
    M_\theta(z)
    =
    \begin{pmatrix}
        \mathbb A_\theta(z) & \mathbb B_\theta(z)\\[0.4em]
        \mathbb E_\theta(z) & \mathbb D_\theta(z)
    \end{pmatrix},
\]
where
\[
    \mathbb A_\theta(z)
    :=
    \mathcal P_\theta M_\theta(z)\mathcal P_\theta,
    \quad
    \mathbb B_\theta(z)
    :=
    \mathcal P_\theta M_\theta(z)\mathcal P_\theta^\perp,\quad \mathbb E_\theta(z)
    :=
    \mathcal P_\theta^\perp M_\theta(z)\mathcal P_\theta,
    \quad
    \mathbb D_\theta(z)
    :=
    \mathcal P_\theta^\perp M_\theta(z)\mathcal P_\theta^\perp .
\]

\begin{theorem}
\label{thm:D-inv}
Assume that the vertex Dirichlet-to-Neumann operator satisfies the uniform
Steklov gap estimate
\begin{equation}
\label{eq:vertex-steklov-gap-used}
    \bigl(
        -\Lambda_{v,\theta} q,q
    \bigr)_{L^2(D_{\mathrm{cell}})}
    \geq
    c h^{-1}
    ||q||_{L^2(D_{\mathrm{cell}})}^2,
    \qquad
    q\in
    D(\Lambda_{v,\theta})\cap\breve{\mathcal E}_\theta ,
\end{equation}
with $c>0$ independent of $h$ and $\theta$. Then there exist $C>0$ and $h_0>0$, independent of
$\theta\in Q'$ and $z\in K$, such that, for $0<h<h_0$,
\[
    \Bigl\Vert
        \bigl(
            \mathcal P_\theta^\perp
            M_\theta(z)
            \mathcal P_\theta^\perp
        \bigr)^{-1}
    \Bigr\Vert_{\breve{\mathcal E}_\theta\to\breve{\mathcal E}_\theta}
    \leq
    Ch .
\]
\end{theorem}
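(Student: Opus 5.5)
We prove the bound through a dissipativity (numerical-range) estimate for the compressed block. We decompose $\mathbb D_\theta(z)$ into a large sign-definite part coming from the vertex Dirichlet-to-Neumann map and a remainder that is either of one sign or uniformly bounded. Write, in the form sense on $D(\lambda_\theta)\cap\breve{\mathcal E}_\theta$,
\[
 \mathbb D_\theta(z)=\mathcal P_\theta^\perp\Lambda_{v,\theta}\mathcal P_\theta^\perp
 +\mathcal P_\theta^\perp\Lambda_{e,\theta}\mathcal P_\theta^\perp
 +\mathcal P_\theta^\perp\bigl(zS_{v}(z)+zS_e(z)\bigr)\mathcal P_\theta^\perp,
\]
where $S_{v(e)}(z):=(\Pi_{v(e),\theta})^*\bigl(I-z\mathcal A_{0,v(e),\theta}^{-1}\bigr)^{-1}\Pi_{v(e),\theta}$, as in \eqref{eq:Weyl-explicit}. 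The first term satisfies $-(\cdot\, q,q)\geq ch^{-1}\Vert q\Vert^2$ by \eqref{eq:vertex-steklov-gap-used}, which is Theorem~\ref{thm:DtN-gap}. The second term is non-positive, since $\lambda_{e,\theta}\geq0$.

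\textbf{Vertex remainder.} We show that $zS_v(z)$ is uniformly bounded. By Theorem~\ref{thm:vertex-lift-estimate}, $\Vert\Pi_{v,\theta}\Vert\le C$. By \eqref{A0estimate}, $\Vert\mathcal A_{0,v,\theta}^{-1}\Vert\le Ch$, so for $h$ small and $|z|\le R$ a Neumann series gives $\Vert(I-z\mathcal A_{0,v,\theta}^{-1})^{-1}\Vert\le 2$. Hence $\Vert zS_v(z)\Vert\le C$, uniformly in $\theta$ and $z\in K$.

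\textbf{Edge remainder.} The edge part cannot be controlled by a norm bound, because $\mathcal A_{0,e,\theta}^{-1}$ is only $O(1)$ (Theorem~\ref{lem:edge-A0-inverse}) and $z$ may approach the edge Dirichlet spectrum in real part. We use the identity
\[
 zS_e(z)=z\Pi_{e,\theta}^*\Pi_{e,\theta}+z^2\Pi_{e,\theta}^*(\mathcal A_{0,e,\theta}-z)^{-1}\Pi_{e,\theta}.
\]
Since $|\operatorname{Im}z|\geq\varepsilon$, $\Vert(\mathcal A_{0,e,\theta}-z)^{-1}\Vert\le\varepsilon^{-1}$. Combined with Theorem~\ref{lem:edge-Pi-bound} ($\Vert\Pi_{e,\theta}\Vert\le C$), this gives $\Vert zS_e(z)\Vert\le C(R+R^2\varepsilon^{-1})$, uniformly in $h$, $\theta$ and $z\in K$. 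The same identity applied to the vertex term is an alternative to the Neumann-series argument above.

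\textbf{Coercivity.} For $q\in D(\mathbb D_\theta)\subset\breve{\mathcal E}_\theta$ we then get
\[
 -\operatorname{Re}(\mathbb D_\theta(z)q,q)\geq (ch^{-1}-C)\Vert q\Vert^2\geq \tfrac c2h^{-1}\Vert q\Vert^2
\]
for $h<h_0$. The same lower bound holds for $\mathbb D_\theta(z)^*=\mathbb D_\theta(\bar z)$, using $K=\overline K$. Hence $-\mathbb D_\theta(z)$ is injective with closed range, and its adjoint is injective, so the range is dense. Therefore $\mathbb D_\theta(z)$ is boundedly invertible on $\breve{\mathcal E}_\theta$, and Cauchy--Schwarz gives $\Vert\mathbb D_\theta(z)^{-1}\Vert\le 2c^{-1}h$.

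\textbf{Expected obstacle.} The main difficulty is functional-analytic rather than quantitative. $\mathbb D_\theta(z)$ is unbounded, and we must make sure it is a closed m-sectorial-type operator on $\breve{\mathcal E}_\theta$ whose adjoint is $\mathbb D_\theta(\bar z)$. We would handle this at the level of forms: the compressed form $-\lambda_\theta$ restricted to $H^{1/2}\cap\breve{\mathcal E}_\theta$ is closed and bounded above, by Theorem~\ref{thm:lambda-coercivity-compact-resolvent} and the fact that $\mathcal P_\theta$ is a rank-one projection onto a smooth function. The bounded perturbations $zS_{v(e)}$ do not change this. Kato's representation theorem for the resulting sectorial form then gives the operator, together with the adjoint relation and invertibility directly, via Lax--Milgram on $H^{1/2}\cap\breve{\mathcal E}_\theta$.
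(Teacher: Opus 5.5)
Your argument is correct and is essentially the paper's proof. The paper likewise treats $\mathcal B_{\theta,h}(z)=z\Pi_\theta^*(I-z\mathcal A_{0,\theta}^{-1})^{-1}\Pi_\theta$ as a uniformly bounded perturbation; it bounds $\mathcal A_{0,\theta}(\mathcal A_{0,\theta}-z)^{-1}$ in one step by the spectral theorem, whereas you split into vertex and edge parts. It then combines the Steklov gap with $\lambda_{e,\theta}\geq0$ to obtain coercivity of the compressed form on $H^{1/2}(D_{\mathrm{cell}})\cap\breve{\mathcal E}_\theta$ with constant $\tfrac{c}{2}h^{-1}$, and concludes by Lax--Milgram.

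The one step to make explicit is the extension of the gap hypothesis from $D(\Lambda_{v,\theta})\cap\breve{\mathcal E}_\theta$ to the form domain. This is needed because $D(\mathbb D_\theta(z))$ need not lie in $D(\Lambda_{v,\theta})$; the paper does it by spectral calculus, using that $\mathcal P_\theta$ is a spectral projection of $\Lambda_{v,\theta}$.
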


\begin{proof}
The Ryzhov representation of the Weyl function gives
\[
    M_\theta(z)
    =
    \Lambda_\theta
    +
    z
    \Pi_\theta^*
    \bigl(
        I-z\mathcal A_{0,\theta}^{-1}
    \bigr)^{-1}
    \Pi_\theta ,
    \qquad
    \Lambda_\theta=\Lambda_{v,\theta}+\Lambda_{e,\theta}
\]
with the last identity understood in the form sense.
Set $\mathcal B_{\theta,h}(z)
    :=
    z
    \Pi_\theta^*
    \bigl(
        I-z\mathcal A_{0,\theta}^{-1}
    \bigr)^{-1}
    \Pi_\theta$. By the uniform bounds for the vertex lift and by
Theorem~\ref{lem:edge-Pi-bound}, we have $||\Pi_\theta||_{\mathcal E\to\mathcal H}\le C$.

Moreover, since $\mathcal A_{0,\theta}$ is self-adjoint and non-negative,
$\bigl(
        I-z\mathcal A_{0,\theta}^{-1}
    \bigr)^{-1}
    =
    \mathcal A_{0,\theta}
    (\mathcal A_{0,\theta}-z)^{-1}$. Therefore, by the spectral theorem and the definition of $K$,
\[
    \Bigl\Vert
        \bigl(
            I-z\mathcal A_{0,\theta}^{-1}
        \bigr)^{-1}
    \Bigr\Vert_{\mathcal H\to\mathcal H}
    \le C_K,
    \qquad
    z\in K,
\]
with $C_K$ independent of $h$ and $\theta$. Consequently,
\begin{equation}
\label{eq:Btheta-bound}
    ||\mathcal B_{\theta,h}(z)||_{\mathcal E\to\mathcal E}
    \le C_K,
    \qquad
    z\in K,\quad \theta\in Q'.
\end{equation}

We define the compressed block through its associated form.  By spectral
calculus, the Steklov gap \eqref{eq:vertex-steklov-gap-used}, initially
stated on the operator domain, extends to the form domain of
$-\Lambda_{v,\theta}$.  Let
$\mathcal V_\theta^\perp
    :=
    H^{1/2}(D_{\mathrm{cell}})
    \cap
    \breve{\mathcal E}_\theta$. On $\mathcal V_\theta^\perp$, we define the sesquilinear form
\[
\begin{aligned}
    \mathfrak m_{\theta,z}(q,\psi)
    &:=
    \lambda_{v,\theta}(q,\psi)
    +
    \lambda_{e,\theta}(q,\psi)
    -
    \bigl(
        \mathcal B_{\theta,h}(z)q,\psi
    \bigr)_{\mathcal E},
    \qquad
    q,\psi\in\mathcal V_\theta^\perp .
\end{aligned}
\]
This is a bounded perturbation of the closed non-negative form
$\lambda_{v,\theta}+\lambda_{e,\theta}$. Hence
$\mathfrak m_{\theta,z}$ is closed and sectorial on
$\breve{\mathcal E}_\theta$. Let $T_{\theta,z}$ be the operator associated
with this form. By the form definition of the compressed Weyl block,
the operator $T_{\theta,z}$ satisfies
$T_{\theta,z}
    =
    -
    \mathbb D_\theta(z)$
on the compressed space $\breve{\mathcal E}_\theta$.

We next prove coercivity. For
$q\in\mathcal V_\theta^\perp$, the vertex Steklov gap gives
$\lambda_{v,\theta}(q,q)\ge c h^{-1}||q||_{\mathcal E}^2$. The edge Dirichlet-to-Neumann form is non-negative: $\lambda_{e,\theta}(q,q)\ge 0$. Using \eqref{eq:Btheta-bound}, we obtain
\[
    \operatorname{Re}\mathfrak m_{\theta,z}(q,q)=
    \lambda_{v,\theta}(q,q)
    +
    \lambda_{e,\theta}(q,q)
    -
    \operatorname{Re}
    \bigl(
        \mathcal B_{\theta,h}(z)q,q
    \bigr)_{\mathcal E}\ge
    \lambda_{v,\theta}(q,q)
    +
    \lambda_{e,\theta}(q,q)
    -
    C_K||q||_{\mathcal E}^2 .
\]
By the gap estimate,
\[
    C_K||q||_{\mathcal E}^2
    \le
    \frac{C_Kh}{c}
    \lambda_{v,\theta}(q,q).
\]
Therefore, after decreasing $h_0$ if necessary, for all
$0<h<h_0$ we have $\frac{C_Kh}{c}\le \frac12$, and hence
\begin{equation}
\label{eq:t-coercive-V}
    \operatorname{Re}\mathfrak m_{\theta,z}(q,q)
    \ge
    \frac12\lambda_{v,\theta}(q,q)
    +
    \lambda_{e,\theta}(q,q).
\end{equation}
In particular, using again the Steklov gap, the right-hand side of \eqref{eq:t-coercive-V} controls the natural form
norm on $\mathcal V_\theta^\perp$. Thus
$\mathfrak m_{\theta,z}$ is coercive on $\mathcal V_\theta^\perp$,
uniformly in $\theta\in Q'$ and $z\in K$.

Let $f\in\breve{\mathcal E}_\theta$. By the Lax--Milgram lemma applied to
the coercive form $\mathfrak m_{\theta,z}$, there exists a unique
$q\in\mathcal V_\theta^\perp$ such that $\mathfrak m_{\theta,z}(q,\psi)
    =
    (f,\psi)_{\mathcal E}$ for all $\psi\in\mathcal V_\theta^\perp$. By the definition of the associated operator,
$q\in D(T_{\theta,z})$ and $T_{\theta,z}q=f$. Hence $T_{\theta,z}$ is bijective on $\breve{\mathcal E}_\theta$. Since
$T_{\theta,z}=-\mathbb D_\theta(z)$, it follows that
$\mathbb D_\theta(z)$ is also bijective on $\breve{\mathcal E}_\theta$.

It remains to obtain the inverse estimate. Taking $\psi=q$ in the weak
equation and using the lower bound before \eqref{eq:t-coercive-V} gives
\[
    \frac{c}{2h}||q||_{\mathcal E}^2
    \le
    \operatorname{Re}\mathfrak m_{\theta,z}(q,q)
    =
    \operatorname{Re}(f,q)_{\mathcal E}
    \le
    ||f||_{\mathcal E}||q||_{\mathcal E}.
\]
Therefore $||q||_{\mathcal E}
    \le
    Ch||f||_{\mathcal E}$. Since $q=T_{\theta,z}^{-1}f=-\mathbb D_\theta(z)^{-1}f$, we conclude that $
        ||\mathbb D_\theta(z)^{-1}||_{\breve{\mathcal E}_\theta\to\breve{\mathcal E}_\theta}
    \le
    Ch$. Equivalently,
\[
    \Bigl\Vert
        \bigl(
            \mathcal P_\theta^\perp
            M_\theta(z)
            \mathcal P_\theta^\perp
        \bigr)^{-1}
    \Bigr\Vert_{\breve{\mathcal E}_\theta\to\breve{\mathcal E}_\theta}
    \le
    Ch ,
\]
uniformly for $z\in K$, $\theta\in Q'$, and $0<h<h_0$. The proof is
complete.

\end{proof}

\subsubsection{Uniform control of the off-diagonal Weyl blocks}
\label{sec:offdiag-weyl}

The edge Dirichlet-to-Neumann operator is unbounded on
$\mathcal E=L^2(D_{\mathrm{cell}})$. The Schur reduction requires uniform
bounds only for the two cross-blocks involving the retained vertex channel.
In the straight cylindrical geometry these blocks involve only the transverse
ground-mode sector.

Let $\mathscr I$ be the finite set of edge--vertex incidences in the periodic quotient. Thus an unfolded edge contributes one incidence for each endpoint contact plate. For $\iota=(e,\sigma)\in\mathscr I$, $\sigma\in\{-,+\}$, denote the corresponding contact plate by $D_\iota^h\simeq h\omega_e$. After the transverse magnetic gauge used in Section~5.1, let $\phi_{\iota,\theta}^h$ be the normalised transverse ground mode on $D_\iota^h$, including the constant incidence-dependent Floquet phase. We set $\mathcal E_0:=\operatorname{span}\{\phi_{\iota,\theta}^h:\,\iota\in\mathscr I\},$ $P_0:\mathcal E\to\mathcal E_0.$ The dimension of $\mathcal E_0$ is independent of $h$. Since the retained vertex trace is proportional to $e^{-i\theta\cdot x}$ on every contact plate,
\begin{equation}
\label{eq:P0P}
\hat{\mathcal E}_\theta\subset\mathcal E_0,
\qquad
\mathcal P_\theta=P_0\mathcal P_\theta=\mathcal P_\theta P_0.
\end{equation}

\begin{prop}[Edge cross-block estimate]
\label{prop:edge-cross-blocks}
Under the straight-cylinder assumptions of Section~5.1, the edge Dirichlet-to-Neumann cross forms extend uniquely to bounded operators
\[
\mathcal P_\theta\Lambda_{e,\theta}\mathcal P_\theta^\perp:
\breve{\mathcal E}_\theta\to\hat{\mathcal E}_\theta,
\qquad
\mathcal P_\theta^\perp\Lambda_{e,\theta}\mathcal P_\theta:
\hat{\mathcal E}_\theta\to\breve{\mathcal E}_\theta,
\]
and
\begin{equation}
\label{eq:edge-cross-block-bounds}
\bigl\Vert\mathcal P_\theta\Lambda_{e,\theta}\mathcal P_\theta^\perp\bigr\Vert+
\bigl\Vert\mathcal P_\theta^\perp\Lambda_{e,\theta}\mathcal P_\theta\bigr\Vert
\le C,
\end{equation}
uniformly in $h$ and $\theta$.
\end{prop}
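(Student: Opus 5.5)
The plan is to diagonalise the edge Dirichlet-to-Neumann form in the transverse Neumann eigenbasis on each cylinder. This shows that $\Lambda_{e,\theta}$ is block-diagonal with respect to the splitting $\mathcal E=\mathcal E_0\oplus\mathcal E_0^\perp$, where $\mathcal E_0^\perp$ consists of the higher transverse modes. The restriction of $\Lambda_{e,\theta}$ to $\mathcal E_0$ is a finite matrix with uniformly bounded entries. Combined with \eqref{eq:P0P}, this reduces both cross-blocks to compressions of that matrix.

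\emph{Step 1 (mode decoupling).} On each cylinder $(0,l_e)\times h\omega_e$ I use the transverse gauge $e^{i\theta_{\perp,e}\cdot y}$ and the longitudinal gauge $e^{i\theta_e s}$ from the proof of Theorem~\ref{lem:edge-Pi-bound}. Let $g$ be endpoint data with coefficients $a_k,b_k$ with respect to $\chi_k^h$; here I include the gauge phases on the plates, so that the $k=0$ basis vectors are exactly the $\phi^h_{\iota,\theta}$. The harmonic lift is then $\sum_k v_k(s)\chi_k^h(y)$, multiplied by the unimodular gauge factors. Because the $\chi_k^h$ are $L^2(h\omega_e)$-orthogonal, so are their gradients, and the energy splits as
\[
\lambda_{e,\theta}(g,\psi)=\sum_{k\geq0}\int_0^{l_e}\bigl(v_k'\overline{w_k'}+\mu_k^h v_k\overline{w_k}\bigr)\,ds ,
\]
with $w_k$ the corresponding modes of $\psi$. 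Hence $\lambda_{e,\theta}(g,\psi)=0$ whenever $g\in H^{1/2}\cap\mathcal E_0$ and $\psi\in H^{1/2}\cap\mathcal E_0^\perp$. The cross-term is the only point needing care. I would check it first for smooth data, where $v_k$ is explicit, and then extend by density in the form norm of Theorem~\ref{thm:lambda-coercivity-compact-resolvent}.

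\emph{Step 2 (ground-mode matrix).} For $k=0$, $v_0$ is affine, so $\int_0^{l_e}|v_0'|^2=|v_0(l_e)-v_0(0)|^2/l_e$. The $k=0$ block of $\lambda_{e,\theta}$ is therefore a $2\times2$ matrix per edge, with entries of modulus at most $2/l_-$ and unimodular phases $e^{i\theta_e l_e}$. It is independent of $h$ apart from $l_e$, and bounded uniformly in $\theta$. Since $\mathcal E_0$ is finite-dimensional and consists of smooth functions on each plate, $\mathcal E_0\subset D(\Lambda_{e,\theta})$. By Step 1, $\Lambda_{e,\theta}\mathcal E_0\subset\mathcal E_0$, and $\|\Lambda_{e,\theta}|_{\mathcal E_0}\|\le C$. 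Self-adjointness then gives that $\mathcal E_0^\perp\cap D(\Lambda_{e,\theta})$ is also invariant.

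\emph{Step 3 (cross-blocks).} The retained vector satisfies $\psi_{v,\theta}\in\hat{\mathcal E}_\theta\subset\mathcal E_0\subset D(\Lambda_{e,\theta})$. The operator $\mathcal P_\theta^\perp\Lambda_{e,\theta}\mathcal P_\theta$ is rank one, with norm $\|\mathcal P_\theta^\perp\Lambda_{e,\theta}\psi_{v,\theta}\|$. This is at most $\|\Lambda_{e,\theta}|_{\mathcal E_0}\|\le C$, because $\Lambda_{e,\theta}\psi_{v,\theta}\in\mathcal E_0$.

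For the other block, take $q\in\breve{\mathcal E}_\theta\cap H^{1/2}$. Symmetry of the form gives
\[
\bigl(\mathcal P_\theta\Lambda_{e,\theta}\mathcal P_\theta^\perp q,\psi_{v,\theta}\bigr)=-\lambda_{e,\theta}(q,\psi_{v,\theta})=(q,\Lambda_{e,\theta}\psi_{v,\theta}).
\]
Its modulus is at most $C\|q\|$. This yields the bounded extension to all of $\breve{\mathcal E}_\theta$; the extension is unique by density of $H^{1/2}\cap\breve{\mathcal E}_\theta$. Moreover, the resulting operator is the adjoint of the first block, and \eqref{eq:edge-cross-block-bounds} follows.

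\emph{Main obstacle.} The expected obstacle is Step 1. One has to verify that the gauge phases on the two plates of one cylinder match the definition of $\phi^h_{\iota,\theta}$, so that $\mathcal E_0$ is precisely the span of the $k=0$ modes. One must also justify the orthogonal splitting of the transposition-defined form when the data lie only in $H^{1/2}$. I would handle the latter by approximating in the $\lambda_{e,\theta}+\|\cdot\|^2$ form norm, which is equivalent to the $H^{1/2}$ norm by Theorem~\ref{thm:lambda-coercivity-compact-resolvent}. The orthogonal projection onto $\mathcal E_0^\perp$ preserves $H^{1/2}$, since it subtracts finitely many smooth functions.
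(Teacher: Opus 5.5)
Your proposal is correct and follows essentially the paper's argument: transverse-mode decoupling of the edge form, a uniformly bounded $2\times2$ ground-mode matrix on each edge, and reduction of both cross-blocks through $\hat{\mathcal E}_\theta\subset\mathcal E_0$, with the second block obtained from the first by symmetry of the form. One small point: justify $\mathcal E_0\subset D(\Lambda_{e,\theta})$ from your Steps 1--2, or from the explicit affine ground-mode lift, which satisfies the lateral magnetic Neumann condition exactly, and not from smoothness of the plate data alone, since in mixed problems smooth Dirichlet data do not by themselves guarantee an $L^2$ conormal trace.
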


\begin{proof}
Fix an unfolded edge
$X_{\mathrm{cell},e}^h=(0,l_e)\times h\omega_e$, and write
$\theta_e:=t_e\cdot\theta$.  After removal of the transverse magnetic
phase, expand the boundary data and the harmonic extension in the Neumann
eigenbasis $\{\chi_k^h\}_{k\ge0}$ of $h\omega_e$, where
\[
-\Delta_y\chi_k^h=\mu_k^h\chi_k^h,
\qquad
\mu_0^h=0,
\qquad
\mu_k^h=h^{-2}\mu_k\quad(k\ge1).
\]
If $a_k,b_k$ are the endpoint coefficients of the $k$-th transverse mode, its longitudinal coefficient satisfies
\[
-\left(\partial_s+i\theta_e\right)^2u_k+\mu_k^hu_k=0,
\qquad u_k(0)=a_k,\quad u_k(l_e)=b_k.
\]
Thus different transverse indices do not couple. Equivalently, in the form sense,
\begin{equation}
\label{eq:edge-mode-orthogonality}
\lambda_{e,\theta}(g_0,g_\perp)=0,
\qquad
 g_0\in\mathcal E_0,
\quad g_\perp\in(I-P_0)\mathcal E\cap H^{1/2}(D_{\mathrm{cell}}).
\end{equation}

It remains to estimate the restriction of the edge Dirichlet-to-Neumann form to $\mathcal E_0$. For the transverse ground mode, after the longitudinal gauge $v(s)=e^{i\theta_es}u_0(s)$, one has $-v''=0$ with
\[
v(0)=a_0,
\qquad
v(l_e)=e^{i\theta_el_e}b_0.
\]
Hence
\[
v(s)=\left(1-\frac{s}{l_e}\right)a_0+
\frac{s}{l_e}e^{i\theta_el_e}b_0,
\]
and therefore
\[
|v'(0)|+|v'(l_e)|
\le \frac{2}{l_e}(|a_0|+|b_0|)
\le \frac{2}{l_-}(|a_0|+|b_0|).
\]
Returning to the magnetic variables only multiplies endpoint quantities by phases of modulus one. Thus the ground-mode endpoint Dirichlet-to-Neumann matrix is uniformly bounded. Summing over the finitely many edges in the cell gives a bounded operator $\Lambda_{e,\theta}^{(0)}=P_0\Lambda_{e,\theta}P_0$ on $\mathcal E_0$, in the sense of the restricted form, with
\begin{equation}
\label{eq:edge-ground-dtn-bound}
\Vert\Lambda_{e,\theta}^{(0)}\Vert_{\mathcal E_0\to\mathcal E_0}\le C.
\end{equation}

Let $q\in\breve{\mathcal E}_\theta\cap H^{1/2}(D_{\mathrm{cell}})$ and $p\in\hat{\mathcal E}_\theta$. Decompose $q=P_0q+(I-P_0)q$. By \eqref{eq:P0P} and \eqref{eq:edge-mode-orthogonality}, one has
\[
-\lambda_{e,\theta}(q,p)
=(\Lambda_{e,\theta}^{(0)}P_0q,p)_{\mathcal E},
\]
and hence $|\lambda_{e,\theta}(q,p)|
\le C\Vert q\Vert_{\mathcal E}\Vert p\Vert_{\mathcal E}.$
Density gives the first bounded cross-block. The second follows in the same way, or from symmetry of the edge Dirichlet-to-Neumann form. This proves \eqref{eq:edge-cross-block-bounds}.
\end{proof}

\begin{prop}[Off-diagonal Weyl estimate]
\label{prop:weyl-cross-blocks}
The off-diagonal blocks of the Weyl function, understood as their bounded extensions, satisfy
\begin{equation}
\label{eq:off-diagonal-bounds}
\Vert\mathbb B_\theta(z)\Vert_{\breve{\mathcal E}_\theta\to\hat{\mathcal E}_\theta}
+
\Vert\mathbb E_\theta(z)\Vert_{\hat{\mathcal E}_\theta\to\breve{\mathcal E}_\theta}
\le C
\end{equation}
uniformly for $\theta\in Q'$ and $z\in K$.
\end{prop}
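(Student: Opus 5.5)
We split the Weyl function using the Ryzhov representation \eqref{eq:Weyl-explicit}:
\[
M_\theta(z)=\Lambda_{v,\theta}+\Lambda_{e,\theta}+\mathcal B_{\theta,h}(z),
\]
where the first two terms are understood in the form sense. We then compress each of the three terms between $\mathcal P_\theta$ and $\mathcal P_\theta^\perp$, in either order, and bound each compression separately.

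\emph{Vertex term.} Since $\mathcal P_\theta$ is the spectral projection of the self-adjoint operator $\Lambda_{v,\theta}$ onto its kernel, $\operatorname{Ran}\mathcal P_\theta$ reduces $\Lambda_{v,\theta}$. For $p\in\hat{\mathcal E}_\theta$ and $q\in\mathcal V_\theta^\perp$ we have $\lambda_{v,\theta}(q,p)=-(q,\Lambda_{v,\theta}p)_{\mathcal E}=0$, because $p\in\ker\Lambda_{v,\theta}\subset D(\Lambda_{v,\theta})$. Hence both vertex cross-blocks $\mathcal P_\theta\Lambda_{v,\theta}\mathcal P_\theta^\perp$ and $\mathcal P_\theta^\perp\Lambda_{v,\theta}\mathcal P_\theta$ vanish identically. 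This is the point that makes the estimate uniform despite the $h^{-1}$ Steklov gap of Theorem~\ref{thm:DtN-gap}: the large part of the vertex Dirichlet-to-Neumann operator never enters the off-diagonal blocks.

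\emph{Edge term.} Proposition~\ref{prop:edge-cross-blocks} states directly that $\mathcal P_\theta\Lambda_{e,\theta}\mathcal P_\theta^\perp$ and $\mathcal P_\theta^\perp\Lambda_{e,\theta}\mathcal P_\theta$ extend to bounded operators, with norms bounded by $C$ uniformly in $h$ and $\theta$. We use this without change.

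\emph{Bounded remainder.} The estimate \eqref{eq:Btheta-bound}, obtained in the proof of Theorem~\ref{thm:D-inv}, gives $\Vert\mathcal B_{\theta,h}(z)\Vert_{\mathcal E\to\mathcal E}\le C_K$ uniformly in $h$, $\theta$ and $z\in K$. It rests on the uniform lift bounds of Theorems~\ref{thm:vertex-lift-estimate} and~\ref{lem:edge-Pi-bound}, together with the spectral bound $\Vert\mathcal A_{0,\theta}(\mathcal A_{0,\theta}-z)^{-1}\Vert\le C_K$. Compressing by orthogonal projections does not increase the norm, so $\mathcal P_\theta\mathcal B_{\theta,h}(z)\mathcal P_\theta^\perp$ and $\mathcal P_\theta^\perp\mathcal B_{\theta,h}(z)\mathcal P_\theta$ are bounded by $C_K$.

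\emph{Assembly.} The main technical care is to make the phrase ``bounded extensions'' precise, because $M_\theta(z)$ is an unbounded operator defined only on $D(\Lambda_\theta)$. We work at the level of forms. For $q\in\breve{\mathcal E}_\theta\cap H^{1/2}(D_{\mathrm{cell}})$ and $p\in\hat{\mathcal E}_\theta$, we define the cross form of the Weyl function by
\[
-\lambda_{v,\theta}(q,p)-\lambda_{e,\theta}(q,p)+\bigl(\mathcal B_{\theta,h}(z)q,p\bigr)_{\mathcal E}.
\]
This is admissible because $\hat{\mathcal E}_\theta$ is spanned by the trace of the smooth function $e^{-i\theta\cdot x}$, so $\hat{\mathcal E}_\theta\subset H^{1/2}(D_{\mathrm{cell}})$. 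By the three bounds above, the absolute value of this form is at most $C\Vert q\Vert_{\mathcal E}\Vert p\Vert_{\mathcal E}$. Since $H^{1/2}(D_{\mathrm{cell}})\cap\breve{\mathcal E}_\theta$ is dense in $\breve{\mathcal E}_\theta$, the form defines a unique bounded operator $\breve{\mathcal E}_\theta\to\hat{\mathcal E}_\theta$. On $D(\Lambda_\theta)$ this operator agrees with $\mathbb B_\theta(z)$, and it has norm at most $C$.

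For $\mathbb E_\theta(z)$ we argue in the same way with the roles of $\hat{\mathcal E}_\theta$ and $\breve{\mathcal E}_\theta$ interchanged. Alternatively, one can use the symmetry $M_\theta(z)^*=M_\theta(\overline z)$, which gives $\mathbb E_\theta(z)=\mathbb B_\theta(\overline z)^*$; since $K=\overline K$, the uniform bound for $\mathbb B_\theta$ on $K$ transfers to $\mathbb E_\theta$. All constants involved are independent of $\theta$, $h$ and $z\in K$, which yields \eqref{eq:off-diagonal-bounds}.
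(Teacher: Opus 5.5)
Your proposal is correct and follows the paper's proof: the vertex cross-blocks vanish because $\mathcal P_\theta$ is the spectral projection onto $\ker\Lambda_{v,\theta}$, the edge cross-blocks are bounded by Proposition~\ref{prop:edge-cross-blocks}, and the remainder $\mathcal B_{\theta,h}(z)$ is bounded by \eqref{eq:Btheta-bound}. Your form-level construction of the bounded extensions (using density of $H^{1/2}(D_{\mathrm{cell}})\cap\breve{\mathcal E}_\theta$ in $\breve{\mathcal E}_\theta$) and the symmetry argument $\mathbb E_\theta(z)=\mathbb B_\theta(\overline z)^*$ make explicit what the paper's short proof leaves implicit.
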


\begin{proof}
Because $\mathcal P_\theta$ is a spectral projection of $\Lambda_{v,\theta}$, one has $\mathcal P_\theta\Lambda_{v,\theta}\mathcal P_\theta^\perp=0,$ $\mathcal P_\theta^\perp\Lambda_{v,\theta}\mathcal P_\theta=0.$
The edge cross-blocks are bounded by Proposition~\ref{prop:edge-cross-blocks}. The remaining part of the Weyl function is the bounded operator $\mathcal B_{\theta,h}(z)$ from \eqref{eq:Btheta-bound}. Therefore, $\mathbb B_\theta(z)=
\mathcal P_\theta\Lambda_{e,\theta}\mathcal P_\theta^\perp
+
\mathcal P_\theta\mathcal B_{\theta,h}(z)\mathcal P_\theta^\perp,$
with the analogous formula for $\mathbb E_\theta(z)$, which proves the assertion.
\end{proof}

\subsection{Schur complement reduction of the Weyl-function inverse}
\label{step:schur-complement-reduction}

We now reduce the inverse of the Weyl function to the retained vertex channel. The Dirichlet-to-Neumann part of $M_\theta(z)$ is unbounded on $\mathcal E$. We therefore interpret the block notation through the closed Weyl form; Proposition~\ref{prop:weyl-cross-blocks} shows that the cross terms admit bounded operator representatives.

We first estimate the low block. Since $\mathcal P_\theta\Lambda_{v,\theta}\mathcal P_\theta=0$,
\[
\mathbb A_\theta(z)=
\mathcal P_\theta\Lambda_{e,\theta}\mathcal P_\theta
+
z\mathcal P_\theta\Pi_\theta^*(I-z\mathcal A_{0,\theta}^{-1})^{-1}\Pi_\theta\mathcal P_\theta.
\]
The standard Nevanlinna identity
\[
M_\theta(z)-M_\theta(\zeta)^*
=(z-\overline\zeta)S_{\theta,\zeta}^*S_{\theta,z}
\]
gives, for $q\in\hat{\mathcal E}_\theta$,
\[
\operatorname{Im}(\mathbb A_\theta(z)q,q)_{\mathcal E}
=(\operatorname{Im}z)\Vert S_{\theta,z}q\Vert_{\mathcal H}^2.
\]
The retained channel is uniformly non-degenerate. Indeed, let $m$ be the
number of incidence plates in $D_{\mathrm{cell}}$. By
Assumption~\ref{ass:standing-geometry},
\[
 |D_{\mathrm{cell}}|=mh^2,
 \qquad
 |X^h_{\mathrm{cell},v}|=h^2|X_v^0|.
\]
The vertex lift of the normalised trace $\psi_{v,\theta}$ is
$|D_{\mathrm{cell}}|^{-1/2}e^{-i\theta\cdot x}$, and hence
\[
 \Vert\Pi_{v,\theta}\psi_{v,\theta}\Vert_{\mathcal H_v}^2
 =\frac{|X^h_{\mathrm{cell},v}|}{|D_{\mathrm{cell}}|}
 =\frac{|X_v^0|}{m}.
\]
Since $\hat{\mathcal E}_\theta=\operatorname{span}\{\psi_{v,\theta}\}$, it follows that $\Vert \Pi_{v,\theta}q\Vert_{\mathcal H_v}\ge c_0\Vert q\Vert_{\mathcal E},$ $q\in\hat{\mathcal E}_\theta,$
with $c_0=(|X_v^0|/m)^{1/2}$, independently of $h$ and $\theta$. Moreover,
\[
P_vS_{\theta,z}q=(I-z\mathcal A_{0,v,\theta}^{-1})^{-1}\Pi_{v,\theta}q
=\Pi_{v,\theta}q+O(h)q
\]
by \eqref{A0estimate}. Hence $\Vert S_{\theta,z}q\Vert\ge c\Vert q\Vert$ for $h$ sufficiently small. Since $|\operatorname{Im}z|$ is bounded away from zero on $K$, and $\hat{\mathcal E}_\theta$ is one-dimensional,
\begin{equation}
\label{eq:A-block-inverse-bound}
\Vert\mathbb A_\theta(z)^{-1}\Vert_{\hat{\mathcal E}_\theta\to\hat{\mathcal E}_\theta}\le C.
\end{equation}

We next make the block representation precise.

\begin{lemma}[Operator associated with a block form]
\label{lem:block-form-operator}
Let $\mathscr H=\mathscr H_0\oplus\mathscr H_1$, where $\mathscr H_0$ is finite-dimensional. Let $\mathscr V_1\subset\mathscr H_1$ be dense, and let $\mathfrak d$ be a closed sectorial form on $\mathscr V_1$ with associated operator $D$. Suppose $A\in\mathcal B(\mathscr H_0)$, $B\in\mathcal B(\mathscr H_1,\mathscr H_0)$, and $C\in\mathcal B(\mathscr H_0,\mathscr H_1)$. If on $\mathscr V=\mathscr H_0\oplus\mathscr V_1$ one sets
\[
\mathfrak m(x+y,x'+y')
=(Ax,x')+(By,x')+(Cx,y')+\mathfrak d(y,y'),
\]
then the operator $M$ associated with $\mathfrak m$ satisfies
\[
D(M)=\mathscr H_0\oplus D(D),
\qquad
M=\begin{pmatrix}A&B\\ C&D\end{pmatrix}
\quad\text{on this domain}.
\]
\end{lemma}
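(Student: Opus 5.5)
The plan is to check first that $\mathfrak m$ is a closed sectorial form on $\mathscr V$, so that the associated operator $M$ exists by the first representation theorem. Then I identify $M$ directly from the defining relation $(Mu,w)=\mathfrak m(u,w)$ for all $w\in\mathscr V$.

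\emph{Closedness and sectoriality.} Split $\mathfrak m=\mathfrak m_0+\mathfrak b$, where $\mathfrak m_0(x+y,x'+y')=\mathfrak d(y,y')$ and
\[
\mathfrak b(x+y,x'+y')=(Ax,x')+(By,x')+(Cx,y').
\]
\begin{itemize}
\item $\mathfrak m_0$ is closed and sectorial on $\mathscr H_0\oplus\mathscr V_1$. Its form norm is $\Vert x\Vert^2+\Vert y\Vert_{\mathfrak d}^2$, and $\mathscr H_0$ is finite-dimensional and complete.
\item The perturbation $\mathfrak b$ is bounded on $\mathscr H\times\mathscr H$, because $A$, $B$ and $C$ are bounded.
\item A bounded form perturbation preserves closedness and sectoriality. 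This is Kato's perturbation result, since a bounded form is $0$-bounded relative to any form.
\item $\mathscr V$ is dense in $\mathscr H$ because $\mathscr V_1$ is dense in $\mathscr H_1$.
\end{itemize}
Hence $M$ is well defined. Moreover, $D(M)$ consists of those $u\in\mathscr V$ for which $w\mapsto\mathfrak m(u,w)$ is $\mathscr H$-continuous.

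\emph{Inclusion $\mathscr H_0\oplus D(D)\subset D(M)$.} Let $u=x+y$ with $y\in D(D)$, and let $w=x'+y'\in\mathscr V$. By definition of $D$ we have $\mathfrak d(y,y')=(Dy,y')$. Therefore
\[
\mathfrak m(u,w)=(Ax+By,x')+(Cx+Dy,y').
\]
This is continuous in $w$ for the $\mathscr H$-norm. So $u\in D(M)$ and $Mu=(Ax+By)\oplus(Cx+Dy)$, which is the block matrix action.

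\emph{Converse inclusion.} Let $u=x+y\in D(M)$ and write $Mu=f_0\oplus f_1$.
\begin{itemize}
\item Testing with $w=y'\in\mathscr V_1$ (taking $x'=0$) gives $(Cx,y')+\mathfrak d(y,y')=(f_1,y')$ for all $y'\in\mathscr V_1$.
\item Hence $\mathfrak d(y,y')=(f_1-Cx,y')$, with $f_1-Cx\in\mathscr H_1$.
\item By the characterisation of the operator associated with $\mathfrak d$, this means $y\in D(D)$ and $Dy=f_1-Cx$.
\end{itemize}
So $D(M)\subset\mathscr H_0\oplus D(D)$, and combined with the first inclusion the two domains coincide.

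The only genuinely nontrivial point is the closedness and sectoriality of $\mathfrak m$, i.e.\ that the off-diagonal and $\mathscr H_0$ terms are a bounded form perturbation. Everything else is a direct unwinding of the representation theorem. The finite-dimensionality of $\mathscr H_0$ is used only to ensure that $\mathscr H_0$ lies entirely in the form domain with an equivalent norm.
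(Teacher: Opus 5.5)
Your proposal is correct and takes essentially the same route as the paper. The paper likewise identifies $M$ by testing the defining relation with $y'=0$ and with $x'=0$, and then uses the characterisation of the operator associated with $\mathfrak d$; your check that $\mathfrak m$ is closed and sectorial, as a bounded perturbation of $0\oplus\mathfrak d$, makes explicit a point the paper takes for granted (only note that finite-dimensionality of $\mathscr H_0$ is not actually needed anywhere, since $\mathscr H_0$ is a closed subspace and the $\mathscr H_0$-terms of the form are bounded).
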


\begin{proof}
Let $x\in\mathscr H_0$, $y\in\mathscr V_1$. By the definition of the operator associated with a closed sectorial form, $x+y\in D(M)$ if and only if there is $f_0+f_1\in\mathscr H_0\oplus\mathscr H_1$ such that
\[
\mathfrak m(x+y,x'+y')=(f_0,x')+(f_1,y')
\]
for all $x'\in\mathscr H_0$, $y'\in\mathscr V_1$. Taking $y'=0$ gives $f_0=Ax+By$. Taking $x'=0$ gives
\[
\mathfrak d(y,y')=(f_1-Cx,y')\qquad\forall y'\in\mathscr V_1,
\]
which is equivalent to $y\in D(D)$ and $Dy=f_1-Cx$. The assertion follows.
\end{proof}

The sign convention for the Dirichlet-to-Neumann map is non-positive.  We
therefore apply Lemma~\ref{lem:block-form-operator} to
$-M_\theta(z)$, which is associated with the closed sectorial form
\[
\mathfrak m_{\theta,z}(q,\varphi)
=\lambda_{v,\theta}(q,\varphi)+\lambda_{e,\theta}(q,\varphi)
-(\mathcal B_{\theta,h}(z)q,\varphi)_{\mathcal E},
\qquad
D(\mathfrak m_{\theta,z})=H^{1/2}(D_{\mathrm{cell}}).
\]
Since $\hat{\mathcal E}_\theta$ is generated by the smooth rigid trace, one has $H^{1/2}(D_{\mathrm{cell}})
=\hat{\mathcal E}_\theta\oplus
\bigl(H^{1/2}(D_{\mathrm{cell}})\cap\breve{\mathcal E}_\theta\bigr).$
The restriction of $\mathfrak m_{\theta,z}$ to
$H^{1/2}(D_{\mathrm{cell}})\cap\breve{\mathcal E}_\theta$
in both arguments has associated operator $-\mathbb D_\theta(z)$, while
Proposition~\ref{prop:weyl-cross-blocks} supplies bounded representatives of
the cross forms. Lemma~\ref{lem:block-form-operator}, followed by multiplication
by $-1$, therefore yields
\begin{equation}
\label{eq:M-domain-block}
D(M_\theta(z))
=\hat{\mathcal E}_\theta\oplus D(\mathbb D_\theta(z)),
\end{equation}
and, on this domain,
\begin{equation}
\label{eq:M-block-rigorous}
M_\theta(z)=
\begin{pmatrix}
\mathbb A_\theta(z)&\mathbb B_\theta(z)\\[0.3em]
\mathbb E_\theta(z)&\mathbb D_\theta(z)
\end{pmatrix}.
\end{equation}

We use the following Schur complement formula with an unbounded high block.

\begin{lemma}[Schur complement with a closed high block]
\label{lem:unbounded-schur}
Let $M$ have the form in Lemma~\ref{lem:block-form-operator}. Suppose $A$ and $D$ are bijective with bounded inverses. Put $S:=D-CA^{-1}B,$ $D(S)=D(D).$
If $\Vert D^{-1}CA^{-1}B\Vert<1$, then $S$ and $M$ are bijective,
\[
S^{-1}=(I-D^{-1}CA^{-1}B)^{-1}D^{-1},
\]
and
\[
M^{-1}=
\begin{pmatrix}
A^{-1}+A^{-1}BS^{-1}CA^{-1}&-A^{-1}BS^{-1}\\[0.4em]
-S^{-1}CA^{-1}&S^{-1}
\end{pmatrix}.
\]
\end{lemma}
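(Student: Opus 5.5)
Throughout, $M$ is the operator of Lemma~\ref{lem:block-form-operator}, with $D(M)=\mathscr H_0\oplus D(D)$. The plan is to treat $M$ as a bounded perturbation of the block-diagonal operator $\operatorname{diag}(A,D)$ and reduce everything to the Neumann series for $I-D^{-1}CA^{-1}B$, which is bounded on $\mathscr H_1$.

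First, $S$. Since $CA^{-1}B\in\mathcal B(\mathscr H_1)$ and $D$ is bijective from $D(D)$ onto $\mathscr H_1$, we may write on $D(S)=D(D)$
\[
S=D-CA^{-1}B=D\bigl(I-D^{-1}CA^{-1}B\bigr).
\]
The factor $I-D^{-1}CA^{-1}B$ is a bounded operator on $\mathscr H_1$ of norm-distance less than one from $I$, so it is invertible by a Neumann series. Two checks are needed:
\begin{itemize}
\item $T:=(I-D^{-1}CA^{-1}B)^{-1}$ maps $D(D)$ onto $D(D)$. This holds because $D^{-1}CA^{-1}B$ maps $\mathscr H_1$ into $D(D)$. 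Hence $y\in D(D)$ if and only if $(I-D^{-1}CA^{-1}B)y\in D(D)$.
\item Given $f\in\mathscr H_1$, the equation $Sy=f$ is equivalent to $(I-D^{-1}CA^{-1}B)y=D^{-1}f$.
\end{itemize}
It follows that $S$ is bijective from $D(D)$ onto $\mathscr H_1$, with the stated bounded inverse $S^{-1}=TD^{-1}$.

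Second, $M$. Take $f_0\oplus f_1\in\mathscr H_0\oplus\mathscr H_1$. By the block representation of Lemma~\ref{lem:block-form-operator}, the equation $M(x+y)=f_0+f_1$ with $x\in\mathscr H_0$ and $y\in D(D)$ is equivalent to the system
\[
Ax+By=f_0,\qquad Cx+Dy=f_1.
\]
Since $A$ is invertible, the first equation is equivalent to $x=A^{-1}(f_0-By)$. Substituting into the second equation gives
\[
Sy=f_1-CA^{-1}f_0,
\]
which has the unique solution $y=S^{-1}(f_1-CA^{-1}f_0)\in D(D)$. Back-substitution then gives
\[
x=A^{-1}f_0+A^{-1}BS^{-1}CA^{-1}f_0-A^{-1}BS^{-1}f_1 .
\]
This reproduces the four entries of the claimed matrix. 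Uniqueness at each elimination step gives injectivity of $M$, and existence for every right-hand side gives surjectivity. All four entries are bounded, since $A^{-1}$, $B$, $C$ and $S^{-1}$ are bounded, so $M^{-1}$ is bounded.

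The only delicate point is the domain bookkeeping. We must confirm that the solution constructed this way actually lies in $D(M)=\mathscr H_0\oplus D(D)$, which is where Lemma~\ref{lem:block-form-operator} is used. We must also confirm that no closure of $S$ is needed: $D(S)=D(D)$ is exact because the perturbation $CA^{-1}B$ is bounded. Finite-dimensionality of $\mathscr H_0$ is not needed beyond what Lemma~\ref{lem:block-form-operator} already requires.
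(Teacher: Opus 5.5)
Your proposal is correct and takes essentially the same route as the paper. Both factor $S=D(I-D^{-1}CA^{-1}B)$ on $D(D)$ and invert the bounded factor by a Neumann series, using that $D^{-1}CA^{-1}B$ maps into $D(D)$ to keep the solution in the right domain. Both then obtain $M^{-1}$ by eliminating $x$ from $Ax+By=f_0$, $Cx+Dy=f_1$.
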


\begin{proof}
Set $K_0=CA^{-1}B\in\mathcal B(\mathscr H_1)$. On $D(D)$, we have  $S=D(I-D^{-1}K_0).$
The factor in parentheses is invertible by the Neumann series. For $f\in\mathscr H_1$, put
$y=(I-D^{-1}K_0)^{-1}D^{-1}f$. Then
\[
y=D^{-1}(f+K_0y)\in D(D),
\]
and consequently $Sy=f$. If $Sy=0$, then
$(I-D^{-1}K_0)y=0$, so $y=0$. This proves the asserted formula for
$S^{-1}$. Solving $Ax+By=f_0$ and $Cx+Dy=f_1$ by eliminating $x$ then gives the displayed inverse of $M$. All compositions are well defined because $S^{-1}$ maps into $D(D)$.
\end{proof}

We now apply the lemma. By Theorem~\ref{thm:D-inv}, Proposition~\ref{prop:weyl-cross-blocks}, and \eqref{eq:A-block-inverse-bound},
\[
\left\Vert
\mathbb D_\theta(z)^{-1}
\mathbb E_\theta(z)
\mathbb A_\theta(z)^{-1}
\mathbb B_\theta(z)
\right\Vert_{\breve{\mathcal E}_\theta\to\breve{\mathcal E}_\theta}
\le Ch.
\]
Hence this norm is at most $1/2$ for small $h$. Define
\[
\mathbb S_\theta(z)
:=\mathbb D_\theta(z)-
\mathbb E_\theta(z)\mathbb A_\theta(z)^{-1}\mathbb B_\theta(z),
\qquad
D(\mathbb S_\theta(z))=D(\mathbb D_\theta(z)).
\]
Then
\begin{equation}
\label{eq:Schur-inverse-bound}
\mathbb S_\theta(z)^{-1}
=\left(I-\mathbb D_\theta(z)^{-1}\mathbb E_\theta(z)\mathbb A_\theta(z)^{-1}\mathbb B_\theta(z)\right)^{-1}\mathbb D_\theta(z)^{-1},
\qquad
\Vert\mathbb S_\theta(z)^{-1}\Vert\le Ch.
\end{equation}
Lemma~\ref{lem:unbounded-schur} and the preceding bounds now imply
\[
M_\theta(z)^{-1}=
\begin{pmatrix}
\mathbb A_\theta(z)^{-1}&0\\[0.3em]0&0
\end{pmatrix}+O_{\mathcal E\to\mathcal E}(h),
\]
uniformly in $\theta\in Q'$, $z\in K$. Equivalently,
\begin{equation}
\label{eq:weyl-schur-final}
\left\Vert
M_\theta(z)^{-1}
-\mathcal P_\theta
(\mathcal P_\theta M_\theta(z)\mathcal P_\theta)^{-1}
\mathcal P_\theta
\right\Vert_{\mathcal E\to\mathcal E}
\le Ch.
\end{equation}

Substitution into Kre\u{\i}n's formula yields the first resolvent reduction.

\begin{theorem}
\label{thm:first-reduction}
There exist constants $C>0$, $h_0>0$, independent of $\theta\in Q'$ and $z\in K$, such that for $0<h<h_0$,
\begin{equation}
\label{eqfirstapp}
\left\Vert (\mathcal A^\theta_{0,\mathbb I}-z)^{-1}
-(\mathcal A_{\mathcal P_\theta^\perp,\mathcal P_\theta}-z)^{-1}
\right\Vert_{\mathcal H\to\mathcal H}
\le Ch.
\end{equation}
Here $\mathcal A_{\mathcal P_\theta^\perp,\mathcal P_\theta}$ is the almost solvable extension determined by
$\mathcal P_\theta^\perp\Gamma_{0,\theta}u=0$, $\mathcal P_\theta\Gamma_{1,\theta}u=0$.  This extension is self-adjoint and non-negative.
\end{theorem}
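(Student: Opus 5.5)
The plan is to write both resolvents through Kreĭn-type formulae with the same Dirichlet part, and then compare only the boundary terms. The Ryzhov framework supplies a resolvent formula for an almost solvable extension with boundary condition $\beta_0\Gamma_0u+\beta_1\Gamma_1u=0$. For the pair $\beta_0=\mathcal P_\theta^\perp$, $\beta_1=\mathcal P_\theta$ this formula reads
\[
(\mathcal A_{\mathcal P_\theta^\perp,\mathcal P_\theta}-z)^{-1}
=(\mathcal A_{0,\theta}-z)^{-1}
-S_{\theta,z}\,\mathcal P_\theta\bigl(\mathcal P_\theta M_\theta(z)\mathcal P_\theta\bigr)^{-1}\mathcal P_\theta\,S_{\theta,\overline z}^*.
\]
To justify it directly, let $u=(\mathcal A_{0,\theta}-z)^{-1}f+S_{\theta,z}\phi$. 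The condition $\mathcal P_\theta^\perp\Gamma_0u=0$ forces $\phi\in\hat{\mathcal E}_\theta$. By \eqref{eq:Gamma1} and the identity $\Gamma_1(\mathcal A_{0,\theta}-z)^{-1}=S_{\theta,\overline z}^*$, the condition $\mathcal P_\theta\Gamma_1u=0$ becomes $\mathcal P_\theta S_{\theta,\overline z}^*f+\mathbb A_\theta(z)\phi=0$. Since $\hat{\mathcal E}_\theta$ is one-dimensional and lies in the form domain $H^{1/2}(D_{\mathrm{cell}})$ (it is spanned by the smooth rigid trace), $\mathbb A_\theta(z)$ is a genuine scalar. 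By \eqref{eq:A-block-inverse-bound} it is invertible, and this gives the displayed formula for $z\in K$.

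Self-adjointness and non-negativity I will obtain from a form description. Consider the closed form $a_\theta$ of $\mathcal A^h_\theta$ restricted to the subspace $\{u\in H^1(X^h_{\mathrm{cell}}):\mathcal P_\theta^\perp(u|_{D_{\mathrm{cell}}})=0\}$. This is the energy space of functions whose interface trace is a multiple of $\psi_{v,\theta}$, and it is closed in $H^1$ because $\mathcal P_\theta^\perp$ composed with the trace is continuous. The restricted form is therefore closed, symmetric and non-negative. Integrating by parts, exactly as in the identification $\mathcal A^\theta_{0,\mathbb I}=\mathcal A^h_\theta$ in Section~3 but with test traces only in $\hat{\mathcal E}_\theta$, shows that its operator is precisely $\mathcal A_{\mathcal P_\theta^\perp,\mathcal P_\theta}$. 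The natural boundary condition it produces is $\mathcal P_\theta\Gamma_1u=0$. This makes the resolvent formula above legitimate for all $z\in K$.

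Subtracting the formula above from \eqref{eq:Krein-formula}, the Dirichlet resolvents cancel. This leaves
\[
S_{\theta,z}\Bigl[M_\theta(z)^{-1}-\mathcal P_\theta(\mathcal P_\theta M_\theta(z)\mathcal P_\theta)^{-1}\mathcal P_\theta\Bigr]S_{\theta,\overline z}^*.
\]
The bracket is $O(h)$ in $\mathcal E\to\mathcal E$ by \eqref{eq:weyl-schur-final}. It then suffices to show $\Vert S_{\theta,z}\Vert_{\mathcal E\to\mathcal H}\le C$ uniformly. Write $S_{\theta,z}=(I-z\mathcal A_{0,\theta}^{-1})^{-1}\Pi_\theta$. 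The first factor is bounded by $C_K$ (spectral theorem, as in the proof of Theorem~\ref{thm:D-inv}). The lift $\Pi_\theta$ is bounded by Theorem~\ref{thm:vertex-lift-estimate} and Theorem~\ref{lem:edge-Pi-bound}. The same bound holds for $S_{\theta,\overline z}^*$, since $\overline z\in K$. This yields \eqref{eqfirstapp} with $C$ independent of $\theta$ and $z$.

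The main obstacle is rigor rather than estimation. One must check that the form-defined operator really coincides with the abstract almost solvable extension, and that the Kreĭn-type formula holds when $\Gamma_1$ is only defined on $D(\Gamma_{1,\theta})$ with an unbounded $\Lambda_\theta$. I would handle this by verifying the formula directly: show that the right-hand side maps into the form domain and satisfies the weak equation, rather than invoking abstract theorems. This uses only the finite dimensionality of $\hat{\mathcal E}_\theta$ and the transposition identities defining $\Pi_\theta^*$.
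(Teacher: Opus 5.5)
Your proposal is correct and follows the paper's proof essentially step for step: self-adjointness from the energy form restricted to interface traces in $\hat{\mathcal E}_\theta$, the Kre\u{\i}n formula for the restricted extension, and subtraction from the full Kre\u{\i}n formula using \eqref{eq:weyl-schur-final} together with the uniform bound on $S_{\theta,z}$. One point needs tightening: applying $\Gamma_{1,\theta}$ to $S_{\theta,z}\phi$ requires $\hat{\mathcal E}_\theta\subset D(\Lambda_\theta)$, i.e.\ the operator domain and not merely the form domain $H^{1/2}(D_{\mathrm{cell}})$; the paper obtains this from \eqref{eq:M-domain-block}, which rests on the bounded edge cross-blocks.
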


\begin{proof}
We first justify self-adjointness.  Consider the energy form
\[
 \mathfrak b_{\theta,h}[u,v]
 :=\sum_{j\in\{v,e\}}\int_{X_{\mathrm{cell},j}^h}
 (\nabla+i\theta)u_j\cdot\overline{(\nabla+i\theta)v_j}\,dx
\]
on pairs in
$H^1(X_{\mathrm{cell},v}^h)\oplus H^1(X_{\mathrm{cell},e}^h)$ with a
common interface trace in $\hat{\mathcal E}_\theta$.  This domain is dense
in $\mathcal H$ and closed in the product $H^1$-space.  The form norm
is equivalent to the product $H^1$-norm, so the form is closed and
non-negative.  Let $u$ belong to the domain of its associated operator,
with image $f$ and common trace $q\in\hat{\mathcal E}_\theta$.
Testing with zero interface traces gives
$u=\mathcal A_{0,\theta}^{-1}f+\Pi_\theta q$.  Since
$\hat{\mathcal E}_\theta\subset D(\Lambda_\theta)$ by
\eqref{eq:M-domain-block}, testing with the retained harmonic lifts gives
$\mathcal P_\theta(\Pi_\theta^*f+\Lambda_\theta q)=0$.
Conversely, these identities give the form equation by decomposing each
test pair into a zero-trace pair and a retained harmonic lift.  Thus the
associated self-adjoint operator is precisely
$\mathcal A_{\mathcal P_\theta^\perp,\mathcal P_\theta}$.

Equation~\eqref{eq:weyl-schur-final} and the uniform boundedness of $S_{\theta,z}$ and $S_{\theta,\bar z}$ give, by the Kre\u{\i}n formula,
\[
(\mathcal A^\theta_{0,\mathbb I}-z)^{-1}
=(\mathcal A_{0,\theta}-z)^{-1}
-S_{\theta,z}\mathcal P_\theta
(\mathcal P_\theta M_\theta(z)\mathcal P_\theta)^{-1}
\mathcal P_\theta S_{\theta,\bar z}^*
+O_{\mathcal H\to\mathcal H}(h).
\]
The leading term is the Kre\u{\i}n formula for the extension whose boundary
trace is restricted to $\hat{\mathcal E}_\theta$: if
\[
\hat S_{\theta,z}:=S_{\theta,z}|_{\hat{\mathcal E}_\theta},
\qquad
\hat M_\theta(z):=\mathcal P_\theta M_\theta(z)|_{\hat{\mathcal E}_\theta},
\]
then
\[
(\mathcal A_{\mathcal P_\theta^\perp,\mathcal P_\theta}-z)^{-1}
=(\mathcal A_{0,\theta}-z)^{-1}
-\hat S_{\theta,z}\hat M_\theta(z)^{-1}\hat S_{\theta,\bar z}^*.
\]
Comparison proves \eqref{eqfirstapp}.
\end{proof}

\subsection{Reduction of the low-energy Weyl function}

We now use the representation
\begin{equation}\label{eq:M-representation}
M_\theta(z)
=
\Lambda_\theta
+
z\Pi_\theta^*(I-z\mathcal A_{0,\theta}^{-1})^{-1}\Pi_\theta,
\end{equation}
valid for $z\in \rho(\mathcal A_{0,\theta})$.  For
$j\in\{v,e\}$, set $\hat\Pi_{j,\theta}:=\Pi_{j,\theta}\mathcal P_\theta,$ $\hat\Lambda_{j,\theta}:=
 \mathcal P_\theta\Lambda_{j,\theta}\mathcal P_\theta,$
and define
\[
 \hat\Pi_\theta:=\Pi_\theta\mathcal P_\theta
 =\hat\Pi_{v,\theta}\oplus\hat\Pi_{e,\theta},
 \qquad
 \hat S_{j,z,\theta}
 :=(I-z\mathcal A_{0,j,\theta}^{-1})^{-1}
   \hat\Pi_{j,\theta}.
\]
Thus
$\hat S_{\theta,z}=\hat S_{v,z,\theta}\oplus
\hat S_{e,z,\theta}$, and restriction of
\eqref{eq:M-representation} to $\hat{\mathcal E}_\theta$ gives
\[
 \hat M_\theta(z)
 :=\mathcal P_\theta M_\theta(z)\mathcal P_\theta
 =\hat\Lambda_{v,\theta}+\hat\Lambda_{e,\theta}
  +z\hat\Pi_\theta^*(I-z\mathcal A_{0,\theta}^{-1})^{-1}
    \hat\Pi_\theta.
\]
Since $\hat\Lambda_{v,\theta}=0$, we use
\eqref{A0estimate} to replace the vertex resolvent contribution by its
leading term and define
\[
\hat Q_\theta(z)
:=
z\hat\Pi_{v,\theta}^*\hat\Pi_{v,\theta}
+
\hat M_{e,\theta}(z),
\qquad \textup{where} \qquad 
\hat M_{e,\theta}(z)
=
\hat\Lambda_{e,\theta}
+
z\hat\Pi_{e,\theta}^*(I-z\mathcal A_{0,e,\theta}^{-1})^{-1}\hat\Pi_{e,\theta}.
\]
The operator $\hat Q_\theta(z)$ acts on the one-dimensional boundary space
$\hat{\mathcal{E}}_\theta$. Its vertex term is
$z|X_v^0|/m$ times the identity, where $m$ is the number of contact
plates. The graph-normalised generator is fixed in Subsection~5.7;
evaluated on that generator, the corresponding sesquilinear form has vertex
contribution $z|X_v^0|$.
 
We use the separation of $K$ from the real axis to prove uniform
invertibility of $\hat Q_\theta(z)$ on
$\hat{\mathcal E}_\theta$.  The Nevanlinna identity for the edge
Weyl function gives
\[
 \operatorname{Im}
 \bigl(\hat M_{e,\theta}(z)q,q\bigr)_{\mathcal E}
 = (\operatorname{Im}z)
   \Vert\hat S_{e,z,\theta}q\Vert_{\mathcal H_e}^2.
\]
Consequently, for every $q\in\hat{\mathcal E}_\theta$,
\[
    \operatorname{Im}
    \bigl(
        \hat Q_\theta(z)q,q
    \bigr)_{\mathcal E}
    =
    \operatorname{Im}
    \bigl(
        \hat M_{e,\theta}(z)q,q
    \bigr)_{\mathcal E}
    +
    \operatorname{Im}z\,
    ||\hat\Pi_{v,\theta}q||_{\mathcal H_v}^2     =
    (\operatorname{Im}z)
    \left(
        ||\hat S_{e,z,\theta}q||_{\mathcal H_e}^2
        +
        ||\hat\Pi_{v,\theta}q||_{\mathcal H_v}^2
    \right).
\]
The lower bound
$\Vert\hat\Pi_{v,\theta}q\Vert\geq c\Vert q\Vert$ established before
\eqref{eq:A-block-inverse-bound} therefore gives
$\bigl|(\hat Q_\theta(z)q,q)_{\mathcal E}\bigr|
\geq c_K\Vert q\Vert_{\mathcal E}^2$.

Since $\hat{\mathcal E}_\theta$ is one-dimensional, this implies that
$\hat Q_\theta(z)$ is invertible and $||\hat Q_\theta(z)^{-1}||_{\hat{\mathcal E}_\theta\to\hat{\mathcal E}_\theta}
    \leq C_K$
uniformly for $z\in K$, $\theta\in Q'$, and $h$ sufficiently small.

Set
\[
 R_{\theta,h}(z):=\hat M_\theta(z)-\hat Q_\theta(z)
 =z\hat\Pi_{v,\theta}^*
  \bigl[(I-z\mathcal A_{0,v,\theta}^{-1})^{-1}-I\bigr]
  \hat\Pi_{v,\theta}.
\]
By \eqref{A0estimate} and the uniform boundedness of
$\hat\Pi_{v,\theta}$, one has $\Vert R_{\theta,h}(z)\Vert_{
 \hat{\mathcal E}_\theta\to\hat{\mathcal E}_\theta}
 \leq Ch$
uniformly for $\theta\in Q'$ and $z\in K$. Hence
\[
    \hat M_\theta(z)
    =
    \hat Q_\theta(z)
    \left(
        I+\hat Q_\theta(z)^{-1} R_{\theta,h}(z)
    \right).
\]
Since $\hat Q_\theta(z)^{-1}$ is uniformly bounded, $\Vert\hat Q_\theta(z)^{-1}R_{\theta,h}(z)\Vert\leq Ch.$
Therefore, for all sufficiently small $h$, the operator
$I+\hat Q_\theta(z)^{-1}R_{\theta,h}(z)$ is invertible by a Neumann
series, with inverse uniformly bounded. Consequently,
\[
\begin{aligned}
    \hat M_\theta(z)^{-1}
    =
    \bigl(
        I+\hat Q_\theta(z)^{-1} R_{\theta,h}(z)
    \bigr)^{-1}
    \hat Q_\theta(z)^{-1} =
    \hat Q_\theta(z)^{-1}
    +
    O(h),
\end{aligned}
\]
in the operator norm on $\hat{\mathcal E}_\theta$, uniformly for
$\theta\in Q'$ and $z\in K$.

\subsection{Approximation of the thin-fibre resolvent}

The fibre space was introduced in the order
$\mathcal H=\mathcal H_v\oplus\mathcal H_e$, whereas the effective block
formula is more naturally written in the order
$\mathcal H_e\oplus\mathcal H_v$.  We therefore use the fixed flip unitary
\[
   \mathfrak F:\mathcal H_v\oplus\mathcal H_e
   \longrightarrow\mathcal H_e\oplus\mathcal H_v,
   \qquad \mathfrak F(f_v,f_e)=(f_e,f_v).
\]
Following the preceding Weyl reduction, define the operator-valued function
$\widetilde{\mathcal R}_{\theta,h}(z)$ on
$\mathcal H_e\oplus\mathcal H_v$ by
\begin{equation}\label{R_eff}
    \widetilde{\mathcal R}_{\theta,h}(z)
:=
\begin{pmatrix}
(\mathcal A_{0,e,\theta}-z)^{-1}
-
\hat S_{e,z,\theta}\hat Q_\theta(z)^{-1}\hat S_{e,\bar z,\theta}^*
&
-\hat S_{e,z,\theta}\hat Q_\theta(z)^{-1}\hat\Pi_{v,\theta}^*
\\[0.8em]
-\hat\Pi_{v,\theta}\hat Q_\theta(z)^{-1}\hat S_{e,\bar z,\theta}^*
&
-\hat\Pi_{v,\theta}\hat Q_\theta(z)^{-1}\hat\Pi_{v,\theta}^*
\end{pmatrix}.
\end{equation}
On the original fibre space $\mathcal H$ we set
\begin{equation}\label{eq:R-physical-order}
   \mathcal R_{\theta,h}(z)
   :=\mathfrak F^*\widetilde{\mathcal R}_{\theta,h}(z)\mathfrak F.
\end{equation}
Combining Theorem~\ref{thm:first-reduction} with the estimate $\hat M_\theta(z)^{-1}=\hat Q_\theta(z)^{-1}+O(h)$, we obtain the following approximation.

\begin{theorem}\label{thm:R-approx}
There exist a constant $C>0$, depending only on $K$ and the
constants in Assumption~\ref{ass:standing-geometry}, and
$h_0>0$ such that, for $0<h<h_0$,
\[
||(\mathcal A^\theta_{0,\mathbb I}-z)^{-1}-\mathcal R_{\theta,h}(z)||_{\mathcal H\to\mathcal H}
\le Ch,
\qquad
\theta\in Q',\quad z\in K.
\]
\end{theorem}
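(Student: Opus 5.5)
The plan is a triangle inequality through the intermediate resolvent $(\mathcal A_{\mathcal P_\theta^\perp,\mathcal P_\theta}-z)^{-1}$ of Theorem~\ref{thm:first-reduction}. That theorem already gives an $O(h)$ error for the first leg, uniformly in $\theta\in Q'$ and $z\in K$. It therefore remains to compare $(\mathcal A_{\mathcal P_\theta^\perp,\mathcal P_\theta}-z)^{-1}$ with $\mathcal R_{\theta,h}(z)$.

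For this comparison I would start from the restricted Kre\u{\i}n formula in the proof of Theorem~\ref{thm:first-reduction}:
\[
(\mathcal A_{\mathcal P_\theta^\perp,\mathcal P_\theta}-z)^{-1}
=(\mathcal A_{0,\theta}-z)^{-1}
-\hat S_{\theta,z}\hat M_\theta(z)^{-1}\hat S_{\theta,\bar z}^*.
\]
I would write it in the order $\mathcal H_e\oplus\mathcal H_v$ using $\mathfrak F$. Since $\mathcal A_{0,\theta}$ is diagonal and $\hat S_{\theta,z}=\hat S_{v,z,\theta}\oplus\hat S_{e,z,\theta}$, this produces a $2\times2$ block matrix. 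Its $ee$ entry is $(\mathcal A_{0,e,\theta}-z)^{-1}-\hat S_{e,z,\theta}\hat M_\theta^{-1}\hat S_{e,\bar z,\theta}^*$. Its $vv$ entry is $(\mathcal A_{0,v,\theta}-z)^{-1}-\hat S_{v,z,\theta}\hat M_\theta^{-1}\hat S_{v,\bar z,\theta}^*$, and the off-diagonal entries are the corresponding mixed products.

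Three replacements then take this matrix to \eqref{R_eff}, each with error $O(h)$:
\begin{enumerate}
\item Replace $\hat M_\theta(z)^{-1}$ by $\hat Q_\theta(z)^{-1}$. This costs $O(h)$ by the estimate at the end of Subsection~5.4. The factors $\hat S_{e,z,\theta}$ and $\hat S_{v,z,\theta}$ are uniformly bounded by Theorems~\ref{lem:edge-Pi-bound} and \ref{thm:vertex-lift-estimate}, together with the bound $\Vert(I-z\mathcal A_{0,\theta}^{-1})^{-1}\Vert\le C_K$ from the proof of Theorem~\ref{thm:D-inv}.
\item Drop $(\mathcal A_{0,v,\theta}-z)^{-1}$. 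Its norm is at most $\Vert\mathcal A_{0,v,\theta}^{-1}\Vert\,\Vert\mathcal A_{0,v,\theta}(\mathcal A_{0,v,\theta}-z)^{-1}\Vert\le Ch$ by \eqref{A0estimate}.
\item Replace $\hat S_{v,z,\theta}$ by $\hat\Pi_{v,\theta}$, and likewise $\hat S_{v,\bar z,\theta}^*$ by $\hat\Pi_{v,\theta}^*$. The identity
\[
(I-z\mathcal A_{0,v,\theta}^{-1})^{-1}-I=z(\mathcal A_{0,v,\theta}-z)^{-1}
\]
has norm at most $|z|\,\Vert(\mathcal A_{0,v,\theta}-z)^{-1}\Vert$. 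The spectral theorem bounds this by $C\,\Vert\mathcal A_{0,v,\theta}^{-1}\Vert\le Ch$, since the spectrum lies above $ch^{-1}$ while $|z|\le R$. Combined with $\Vert\Pi_{v,\theta}\Vert\le C$, this gives the $O(h)$ error.
\end{enumerate}
Each replacement is inserted into a product of uniformly bounded factors, so summing over the four blocks gives the bound $Ch$ for the second leg.

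The main obstacle is the uniform boundedness of $\hat S_{\theta,\bar z}^*$ and $\hat Q_\theta(z)^{-1}$, with constants depending only on $K$ and the geometry. I would take the bound on $\hat Q_\theta(z)^{-1}$ from Subsection~5.4. I would also check that $\hat S_{e,\bar z,\theta}^*$ is the adjoint appearing in the restricted Kre\u{\i}n formula, which should follow from $\hat S_{\theta,\bar z}=\hat S_{\theta,\bar z}\mathcal P_\theta$. Finally, the threshold $h_0$ is the minimum of the thresholds in Theorem~\ref{thm:first-reduction} and Subsection~5.4.
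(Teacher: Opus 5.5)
Your proposal is correct and matches the paper's proof. Both go through $(\mathcal A_{\mathcal P_\theta^\perp,\mathcal P_\theta}-z)^{-1}$ using Theorem~\ref{thm:first-reduction}, then make three $O(h)$ substitutions in the four blocks of the restricted Kre\u{\i}n formula: $\hat M_\theta(z)^{-1}\to\hat Q_\theta(z)^{-1}$, dropping $(\mathcal A_{0,v,\theta}-z)^{-1}$, and $\hat S_{v,z,\theta}\to\hat\Pi_{v,\theta}$ (together with the analogous replacement at $\bar z$). The flip $\mathfrak F$ then gives \eqref{R_eff}.
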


\begin{proof}
Theorem~\ref{thm:first-reduction} reduces the assertion to comparison with
\[
 (\mathcal A_{\mathcal P_\theta^\perp,\mathcal P_\theta}-z)^{-1}
 =(\mathcal A_{0,\theta}-z)^{-1}
  -\hat S_{\theta,z}\hat M_\theta(z)^{-1}\hat S_{\theta,\bar z}^*.
\]
By \eqref{A0estimate} and the preceding Weyl reduction, uniformly in
$\theta\in Q'$ and $z\in K$,
\[
 (\mathcal A_{0,v,\theta}-z)^{-1}=O(h),
 \quad \hat S_{v,z,\theta}=\hat\Pi_{v,\theta}+O(h),
 \quad \hat M_\theta(z)^{-1}=\hat Q_\theta(z)^{-1}+O(h).
\]
The same estimates hold at $\bar z$. All remaining factors are uniformly
bounded. Substituting these three identities into the four blocks of the
displayed resolvent, and applying the flip $\mathfrak F$, gives
$\widetilde{\mathcal R}_{\theta,h}(z)$ from \eqref{R_eff} with an
$O(h)$ remainder in each block. Since there are finitely many blocks,
\[
 \Vert(\mathcal A_{\mathcal P_\theta^\perp,\mathcal P_\theta}-z)^{-1}
       -\mathcal R_{\theta,h}(z)\Vert_{\mathcal H\to\mathcal H}
 \leq Ch.
\]
Combining this with Theorem~\ref{thm:first-reduction} proves the assertion.
\end{proof}

\subsection{The reduced effective fibre operator}

We now realise the reduced formula as the lifted resolvent of a
self-adjoint operator on $\mathcal H_e\oplus\mathbb C$, using the
enlarged-space construction in \cite{cherednichenko2025effectiveII}.
The scalar coordinate represents the rigid vertex mode. We retain the
notation $\hat{\mathcal E}_\theta=\operatorname{span}\{\psi_{v,\theta}\}$
for its boundary trace space.

We first introduce the corresponding reduced edge operator. Let
\[
\mathcal A_{e,\theta,\max}u
:=
-(\nabla+i\theta)\cdot(\nabla+i\theta)u
\]
be the maximal edge operator on $\mathcal H_e=L^2(X^h_{\mathrm{cell},e})$,
with domain
\[
D(\mathcal A_{e,\theta,\max})
:=
D(\mathcal A_{0,e,\theta})\dotplus \operatorname{Ran}\Pi_{e,\theta}.
\]
Retaining only the low-energy boundary mode
$\hat{\mathcal E}_\theta$, we restrict this operator to the subspace
generated by the corresponding edge lifts and define
\[
D(\hat{\mathcal A}_{e,\theta})
:=
D(\mathcal A_{0,e,\theta})\dotplus \Pi_{e,\theta}\hat{\mathcal E}_\theta
\subset D(\mathcal A_{e,\theta,\max}),
\qquad 
\hat{\mathcal A}_{e,\theta}u
:=
\mathcal A_{e,\theta,\max}u,
\qquad
u\in D(\hat{\mathcal A}_{e,\theta}).
\]
Thus $\hat{\mathcal A}_{e,\theta}$ is the edge maximal operator with boundary
data restricted to the one-dimensional effective space $\hat{\mathcal E}_\theta$.

Correspondingly, for $j\in\{v,e\}$ and $k\in\{0,1\}$, set $\hat\Gamma_{k,j,\theta}:=
 \mathcal P_\theta\Gamma_{k,j,\theta}$
on the relevant reduced domains.  In particular, if
$u=u_0+\Pi_{e,\theta}(a\psi_{v,\theta})$, then
$\hat\Gamma_{0,e,\theta}u=a\psi_{v,\theta}
\in\hat{\mathcal E}_\theta$.

Let $P^{\mathrm{res}}_{v,\theta}$ be the orthogonal projection in
$\mathcal H_v$ onto $\operatorname{Ran}(\hat\Pi_{v,\theta})$.  Choose a
partial isometry
\[
 J_\theta:\mathcal H_v\to\mathbb C,
 \qquad
 J_\theta^*J_\theta=P^{\mathrm{res}}_{v,\theta},
 \qquad
 J_\theta J_\theta^*=I_{\mathbb C},
\]
whose restriction to $\operatorname{Ran}(\hat\Pi_{v,\theta})$ is unitary,
and define $\widetilde\Pi_{v,\theta}:=J_\theta\hat\Pi_{v,\theta}:
\hat{\mathcal E}_\theta\to\mathbb C.$
Since $\hat{\mathcal E}_\theta$ is one-dimensional and
$\hat\Pi_{v,\theta}$ is injective, the operator
$\widetilde\Pi_{v,\theta}$ is boundedly invertible.

We first define the reduced energy form.  Let
\[
\begin{aligned}
 \mathcal V_{\theta,h}^{\mathrm{eff}}
 :=\bigl\{(u,\beta)\in
 H^1(X_{\mathrm{cell},e}^h)\oplus\mathbb C:
 &\ u|_{D_{\mathrm{cell}}}\in\hat{\mathcal E}_\theta,\\
 &\ \beta=\widetilde\Pi_{v,\theta}
        (u|_{D_{\mathrm{cell}}})\bigr\},
\end{aligned}
\]
where the edge Sobolev space is understood on the periodic quotient, as in
Section~2.  On this domain set
\begin{equation}\label{eq:reduced-effective-form}
 \mathfrak a_{\theta,h}^{\mathrm{eff}}
 [(u,\beta),(v,\gamma)]
 :=
 \int_{X_{\mathrm{cell},e}^h}
 (\nabla+i\theta)u\cdot
 \overline{(\nabla+i\theta)v}\,dx.
\end{equation}
The trace map is continuous, so
$\mathcal V_{\theta,h}^{\mathrm{eff}}$ is closed in
$H^1(X_{\mathrm{cell},e}^h)\oplus\mathbb C$.  It is dense in
$\mathcal H_e\oplus\mathbb C$: for a prescribed scalar component one may
choose an edge lift with the corresponding retained trace and approximate the
remaining edge datum by functions with zero interface trace.  Since $\theta$
ranges over a bounded set, the norm
\[
 \left(
   \mathfrak a_{\theta,h}^{\mathrm{eff}}
   [(u,\beta),(u,\beta)]
   +\Vert u\Vert_{\mathcal H_e}^2+|\beta|^2
 \right)^{1/2}
\]
is equivalent to the product $H^1\oplus\mathbb C$-norm on this closed
subspace.  Hence $\mathfrak a_{\theta,h}^{\mathrm{eff}}$ is a densely
defined, closed, non-negative form.

We denote its associated self-adjoint operator in
$\mathcal H_e\oplus\mathbb C$ by
$\mathcal A_{\theta,h}^{\mathrm{eff}}$.  The reduced Green identity shows
that its operator domain is
\[
D(\mathcal A_{\theta,h}^{\mathrm{eff}})
=
\bigl\{
(u,\beta)\in D(\hat{\mathcal A}_{e,\theta})\oplus\mathbb C
:
\beta=\widetilde\Pi_{v,\theta}\hat\Gamma_{0,e,\theta}u
\bigr\}.
\]
Since $\Lambda_{v,\theta}$ vanishes on $\hat{\mathcal E}_\theta$, the reduced vertex Dirichlet-to-Neumann term is zero.  The associated operator therefore acts as
\begin{equation}\label{eq:A-eff-h-simplified}
\mathcal A_{\theta,h}^{\mathrm{eff}}
\begin{pmatrix}
u\\[0.3em]
\beta
\end{pmatrix}
=
\begin{pmatrix}
\hat{\mathcal A}_{e,\theta}u\\[0.5em]
-(\widetilde\Pi_{v,\theta}^*)^{-1}\hat\Gamma_{1,e,\theta}u
\end{pmatrix},
\qquad
\beta=\widetilde\Pi_{v,\theta}\hat\Gamma_{0,e,\theta}u.
\end{equation}

Indeed, if $(u,\beta)$ belongs to the displayed domain and
$(v,\gamma)\in\mathcal V_{\theta,h}^{\mathrm{eff}}$, the reduced Green
identity and
$\gamma=\widetilde\Pi_{v,\theta}(v|_{D_{\mathrm{cell}}})$ give
\[
 \mathfrak a_{\theta,h}^{\mathrm{eff}}
 [(u,\beta),(v,\gamma)]
 =
 (\hat{\mathcal A}_{e,\theta}u,v)_{\mathcal H_e}
 -(\hat\Gamma_{1,e,\theta}u,
   v|_{D_{\mathrm{cell}}})_{\hat{\mathcal E}_\theta}=
 (\hat{\mathcal A}_{e,\theta}u,v)_{\mathcal H_e}
 +\bigl(- (\widetilde\Pi_{v,\theta}^*)^{-1}
     \hat\Gamma_{1,e,\theta}u,\gamma\bigr)_{\mathbb C}.
\]
Conversely, let $(u,\beta)$ belong to the domain of the operator
associated with \eqref{eq:reduced-effective-form}, with image $(f_e,g_0)$,
and set $q=u|_{D_{\mathrm{cell}}}\in\hat{\mathcal E}_\theta$.
Testing with $(v,0)$, where $v$ has zero interface trace, gives
\[
 u-\Pi_{e,\theta}q\in D(\mathcal A_{0,e,\theta}),
 \qquad
 \mathcal A_{0,e,\theta}(u-\Pi_{e,\theta}q)=f_e.
\]
Hence $u\in D(\hat{\mathcal A}_{e,\theta})$, and the preceding identity
for general test pairs determines
$g_0=-(\widetilde\Pi_{v,\theta}^*)^{-1}\hat\Gamma_{1,e,\theta}u$.
This proves the asserted domain and action in
\eqref{eq:A-eff-h-simplified}.  The first component is the edge differential
expression restricted to the retained boundary channel, while the second is
the corresponding scalar vertex flux.

To compare the reduced resolvent with \eqref{R_eff}, define the isometry
\[
    \bJ_\theta
    :=
    I_{\mathcal H_e}\oplus J_\theta^*
    :
    \mathcal H_e\oplus\mathbb C
    \longrightarrow
    \mathcal H_e\oplus\mathcal H_v .
\]
Here $J_\theta^*:\mathbb C\to\mathcal H_v$ embeds the scalar
effective vertex coordinate into the one-dimensional resonant vertex
subspace $\operatorname{Ran}(\hat\Pi_{v,\theta})\subset\mathcal H_v$, while
$J_\theta$ vanishes on its orthogonal complement.  The adjoint of
$\bJ_\theta$ is
\[
    \bJ_\theta^*
    :=
    I_{\mathcal H_e}\oplus J_\theta:  \mathcal H_e\oplus\mathcal H_v  \to \mathcal H_e\oplus\mathbb C.
\]

\begin{theorem}
\label{thm:eff-fibre}
For every $h>0$ for which the standing geometric assumptions hold, and
every $\theta\in Q'$, the operator
$\mathcal A_{\theta,h}^{\mathrm{eff}}$ is self-adjoint in the reduced
Hilbert space $\mathcal H_e\oplus\mathbb C$. Moreover,
$K\subset\rho(\mathcal A_{\theta,h}^{\mathrm{eff}})$, and for every
$z\in K$ its resolvent satisfies
\[
    \bJ_\theta
    \bigl(\mathcal A_{\theta,h}^{\mathrm{eff}}-z\bigr)^{-1}
    \bJ_\theta^*
    =
    \widetilde{\mathcal R}_{\theta,h}(z)
\]
as an operator on $\mathcal H_e\oplus\mathcal H_v$, where
$\widetilde{\mathcal R}_{\theta,h}(z)$ is the operator-valued function
defined in \eqref{R_eff}.  Equivalently,
\[
  \mathfrak F^*\bJ_\theta
  \bigl(\mathcal A_{\theta,h}^{\mathrm{eff}}-z\bigr)^{-1}
  \bJ_\theta^*\mathfrak F
  =\mathcal R_{\theta,h}(z)
\]
on the original fibre space $\mathcal H$.
\end{theorem}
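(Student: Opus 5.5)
Self-adjointness of $\mathcal A^{\mathrm{eff}}_{\theta,h}$ is immediate from the construction: it is the operator associated with the densely defined, closed, non-negative form $\mathfrak a^{\mathrm{eff}}_{\theta,h}$ on $\mathcal V^{\mathrm{eff}}_{\theta,h}$. By the first representation theorem it is self-adjoint and non-negative, so $K\subset\mathbb C\setminus\mathbb R\subset\rho(\mathcal A^{\mathrm{eff}}_{\theta,h})$.

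For the resolvent identity, I would fix $z\in K$ and $(f_e,f_v)\in\mathcal H_e\oplus\mathcal H_v$. Put $\gamma:=J_\theta f_v\in\mathbb C$ and solve $(\mathcal A^{\mathrm{eff}}_{\theta,h}-z)(u,\beta)=(f_e,\gamma)$ explicitly. Using the domain description \eqref{eq:A-eff-h-simplified}, I make the ansatz
\[
u=(\mathcal A_{0,e,\theta}-z)^{-1}f_e+\hat S_{e,z,\theta}q,\qquad q\in\hat{\mathcal E}_\theta .
\]
This satisfies $(\hat{\mathcal A}_{e,\theta}-z)u=f_e$ and $\hat\Gamma_{0,e,\theta}u=q$, by the defining properties of the solution operator. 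The constraint then forces $\beta=\widetilde\Pi_{v,\theta}q$.

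The scalar equation reads $-(\widetilde\Pi_{v,\theta}^*)^{-1}\hat\Gamma_{1,e,\theta}u-z\beta=\gamma$. Applying $\widetilde\Pi_{v,\theta}^*$ turns it into
\[
-\hat\Gamma_{1,e,\theta}u-z\widetilde\Pi_{v,\theta}^*\widetilde\Pi_{v,\theta}q=\widetilde\Pi_{v,\theta}^*\gamma .
\]
The standard Ryzhov identities give $\hat\Gamma_{1,e,\theta}(\mathcal A_{0,e,\theta}-z)^{-1}f_e=\hat S_{e,\bar z,\theta}^*f_e$. This is the Kre\u{\i}n-formula identity $\Gamma_1(\mathcal A_0-z)^{-1}=S_{\bar z}^*$, obtained from $\Gamma_1\mathcal A_0^{-1}=\Pi^*$ and the resolvent identity. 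They also give $\hat\Gamma_{1,e,\theta}\hat S_{e,z,\theta}q=\hat M_{e,\theta}(z)q$. Since $J_\theta$ is unitary on $\operatorname{Ran}\hat\Pi_{v,\theta}$, we have $\widetilde\Pi_{v,\theta}^*\widetilde\Pi_{v,\theta}=\hat\Pi_{v,\theta}^*\hat\Pi_{v,\theta}$. Moreover $\widetilde\Pi_{v,\theta}^*J_\theta f_v=\hat\Pi_{v,\theta}^*J_\theta^*J_\theta f_v=\hat\Pi_{v,\theta}^*P^{\mathrm{res}}_{v,\theta}f_v=\hat\Pi_{v,\theta}^*f_v$. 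Hence the scalar equation becomes
\[
\hat Q_\theta(z)q=-\hat S_{e,\bar z,\theta}^*f_e-\hat\Pi_{v,\theta}^*f_v .
\]
Since $\hat Q_\theta(z)$ is invertible for $z\in K$, this determines $q$ uniquely. More generally it is invertible wherever $\operatorname{Im}z\neq0$, by the Nevanlinna argument with the strictly positive vertex term $\Vert\hat\Pi_{v,\theta}q\Vert^2$, which holds for every $h$.

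Substituting back gives the edge component $u=(\mathcal A_{0,e,\theta}-z)^{-1}f_e-\hat S_{e,z,\theta}\hat Q_\theta(z)^{-1}(\hat S_{e,\bar z,\theta}^*f_e+\hat\Pi_{v,\theta}^*f_v)$. The vertex component, after embedding by $J_\theta^*$, is
\[
J_\theta^*\beta=J_\theta^*J_\theta\hat\Pi_{v,\theta}q=\hat\Pi_{v,\theta}q=-\hat\Pi_{v,\theta}\hat Q_\theta(z)^{-1}(\hat S_{e,\bar z,\theta}^*f_e+\hat\Pi_{v,\theta}^*f_v).
\]
This matches the four blocks of \eqref{R_eff} exactly. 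The physical-order statement then follows from \eqref{eq:R-physical-order}.

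The main obstacle is justifying the two boundary identities for $\hat\Gamma_{1,e,\theta}$ on the reduced domain, which requires $\Pi_{e,\theta}\hat{\mathcal E}_\theta\subset D(\Gamma_{1,e,\theta})$. For that I would use $\hat{\mathcal E}_\theta\subset D(\Lambda_\theta)$ from \eqref{eq:M-domain-block}, together with the explicit ground-mode computation of Proposition~\ref{prop:edge-cross-blocks}, which shows that $\Lambda_{e,\theta}\psi_{v,\theta}\in L^2$. I would then check that $\mathcal P_\theta$ commutes appropriately through $\Pi_{e,\theta}^*$ and the edge resolvent.
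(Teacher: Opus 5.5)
Your proposal is correct and follows essentially the paper's route: you solve $(\mathcal A^{\mathrm{eff}}_{\theta,h}-z)(u,\beta)=(f_e,g)$ explicitly with $u=(\mathcal A_{0,e,\theta}-z)^{-1}f_e+\hat S_{e,z,\theta}q$, reduce the scalar equation to $\hat Q_\theta(z)q=-(\hat S^*_{e,\bar z,\theta}f_e+\widetilde\Pi^*_{v,\theta}g)$, and match the lifted blocks with \eqref{R_eff} via $J_\theta^*\widetilde\Pi_{v,\theta}=\hat\Pi_{v,\theta}$ and $\widetilde\Pi_{v,\theta}^*J_\theta=\hat\Pi_{v,\theta}^*$. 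You also spell out several steps that the paper covers only by ``direct substitution'': the identities $\hat\Gamma_{1,e,\theta}(\mathcal A_{0,e,\theta}-z)^{-1}=\hat S^*_{e,\bar z,\theta}$ and $\hat\Gamma_{1,e,\theta}\hat S_{e,z,\theta}=\hat M_{e,\theta}(z)$, the inclusion $\psi_{v,\theta}\in D(\Lambda_{e,\theta})$, and the invertibility of $\hat Q_\theta(z)$ for every $h$ rather than only for small $h$.
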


\begin{proof}
Self-adjointness and non-negativity follow from the closed form
\eqref{eq:reduced-effective-form}.  Since $K\subset\mathbb C\setminus\mathbb R$,
one has $K\subset\rho(\mathcal A_{\theta,h}^{\mathrm{eff}})$.  It remains
to identify the resolvent.  Fix $z\in K$ and $(f_e,g)\in\mathcal H_e\oplus\mathbb C$. Define $u$ and $\beta$ by
\[
    u
    =
    (\mathcal A_{0,e,\theta}-z)^{-1}f_e
    -
    \hat S_{e,z,\theta}\hat Q_\theta(z)^{-1}
    \left(
        \hat S_{e,\bar z,\theta}^*f_e
        +
        \widetilde\Pi_{v,\theta}^*g
    \right),
\]
and
\[
    \beta
    = \widetilde\Pi_{v,\theta}\hat\Gamma_{0,e,\theta}u= 
    -
    \widetilde\Pi_{v,\theta}\hat Q_\theta(z)^{-1}
    \left(
        \hat S_{e,\bar z,\theta}^*f_e
        +
        \widetilde\Pi_{v,\theta}^*g
    \right).
\]
Equivalently, relative to the decomposition
$\mathcal H_e\oplus\mathbb C$, one has
\[
\bigl(\mathcal A_{\theta,h}^{\mathrm{eff}}-z\bigr)^{-1}
=
\begin{pmatrix}
(\mathcal A_{0,e,\theta}-z)^{-1}
-
\hat S_{e,z,\theta}\hat Q_\theta(z)^{-1}\hat S_{e,\bar z,\theta}^*
&
-\hat S_{e,z,\theta}\hat Q_\theta(z)^{-1}\widetilde\Pi_{v,\theta}^*
\\[0.8em]
-\widetilde\Pi_{v,\theta}\hat Q_\theta(z)^{-1}\hat S_{e,\bar z,\theta}^*
&
-\widetilde\Pi_{v,\theta}\hat Q_\theta(z)^{-1}\widetilde\Pi_{v,\theta}^*
\end{pmatrix}.
\]
The invertibility of $\hat Q_\theta(z)$ shows that these formulae define $u\in D(\hat{\mathcal A}_{e,\theta})$ and $\beta=\widetilde\Pi_{v,\theta}\hat\Gamma_{0,e,\theta}u$. Hence $(u,\beta)\in D(\mathcal A_{\theta,h}^{\mathrm{eff}})$. A direct substitution gives
\[
(\mathcal A_{\theta,h}^{\mathrm{eff}}-z)(u,\beta)=(f_e,g).
\]
The same calculation with zero right-hand side gives injectivity, and therefore
verifies the displayed inverse formula.

Finally, the identities
$J_\theta^*\widetilde\Pi_{v,\theta}=\hat\Pi_{v,\theta}$ and
$\widetilde\Pi_{v,\theta}^*J_\theta=\hat\Pi_{v,\theta}^*$ identify the
lifted block matrix with \eqref{R_eff}. Thus
\[
 \bJ_\theta(\mathcal A_{\theta,h}^{\mathrm{eff}}-z)^{-1}\bJ_\theta^*
 =\widetilde{\mathcal R}_{\theta,h}(z).
\]
Conjugation by $\mathfrak F$ gives the identity on $\mathcal H$.
\end{proof}

\subsection{Passage to the graph fibre operators}

We now collapse the cylindrical edge components to their transverse magnetic
ground modes.  We distinguish each geometric edge from its two endpoint incidences.  The reduction first produces a graph
with the exact finite-$h$ endpoint phases.  We then identify this graph with
the standard lattice Floquet fibre and, separately, compare it with the graph
obtained by retaining the ambient finite-thickness edge tangent.

Let $\mathscr E_{\mathrm{cell}}$ be the finite set of oriented geometric
edges in the periodic quotient and put
\[
   \mathscr I:=\{(e,\sigma):e\in\mathscr E_{\mathrm{cell}},\ 
   \sigma\in\{-,+\}\}.
\]
The incidence $(e,-)$ corresponds to $x=0$, while $(e,+)$
corresponds to $x=l_e$.  As in Section~1, the possible dependence of
$l_e$ and of the unit tangent $t_e$ on $h$ is suppressed.  We set
\[
   \theta_e^{\mathrm{geo}}:=\theta\cdot t_e,
   \qquad
   \theta_{e,h}^{\mathrm{lat}}:=\frac{\theta\cdot n_e}{l_e},
\]
and introduce the two one-dimensional magnetic derivatives
\[
   D_{\theta,e}^{\mathrm{geo}}
   :=\frac{d}{dx}+i\theta_e^{\mathrm{geo}},
   \qquad
   D_{\theta,e}^{\mathrm{lat}}
   :=\frac{d}{dx}+i\theta_{e,h}^{\mathrm{lat}}.
\]
The superscripts distinguish the ambient-geometric phase from the lattice
Floquet phase.  The total phase of the latter along the edge is
$\theta_{e,h}^{\mathrm{lat}}l_e=\theta\cdot n_e$.

For $u_e\in H^1(0,l_e)$, write
\[
   \gamma_eu_e:=
   \begin{pmatrix}u_e(0)\\u_e(l_e)\end{pmatrix}.
\]
For $u_e\in H^2(0,l_e)$ and
$r\in\{\mathrm{geo},\mathrm{lat}\}$, put
\[
   \partial_{\nu,\theta}^{r}u_e:=
   \begin{pmatrix}
      -D_{\theta,e}^{r}u_e(0)\\[0.2em]
       D_{\theta,e}^{r}u_e(l_e)
   \end{pmatrix}.
\]
Then, for $u_e,v_e\in H^2(0,l_e)$,
\[
 \bigl(-(D_{\theta,e}^{r})^2u_e,v_e\bigr)
 -\bigl(u_e,-(D_{\theta,e}^{r})^2v_e\bigr)
 =
 -\bigl(\partial_{\nu,\theta}^{r}u_e,\gamma_ev_e\bigr)_{\mathbb C^2}
 +
 \bigl(\gamma_eu_e,\partial_{\nu,\theta}^{r}v_e\bigr)_{\mathbb C^2}.
\]

For the incidence $\iota=(e,\sigma)$, let $x^h_{e,\sigma}$ be the
centre of the corresponding contact plate in the local torus chart of the
quotient vertex.  Define
\[
   \alpha^h_{e,\sigma}:=e^{-i\theta\cdot x^h_{e,\sigma}},
   \qquad
   a_e^h:=
   \begin{pmatrix}\alpha^h_{e,-}\\ \alpha^h_{e,+}\end{pmatrix},
   \qquad
   \mathbf1:=\begin{pmatrix}1\\1\end{pmatrix}.
\]
Since $|x^h_{e,\sigma}|\leq Ch^{2/3}$ and $\theta\in Q'$,
\begin{equation}\label{eq:incidence-phase-bound}
   \max_{e,\sigma}|\alpha^h_{e,\sigma}-1|
   +\max_e\Vert a_e^h-\mathbf1\Vert_{\mathbb C^2}
   \leq Ch^{2/3}.
\end{equation}
The two entries of $a_e^h$ need not coincide.

On the cylinder
$X^h_{\mathrm{cell},e}\simeq(0,l_e)\times h\omega_e$, let
\[
   \varphi^h_{\theta,e}(y)
   :=|h\omega_e|^{-1/2}e^{-i\theta_{\perp,e}\cdot y},
   \qquad
   \theta_{\perp,e}:=\theta-(\theta\cdot t_e)t_e.
\]
It is the normalised magnetic transverse ground mode.  Define
\begin{equation}\label{eq:ground-mode-isometry-new}
   (\Phi^0_{\theta,h,e}u_e)(x,y)
   :=u_e(x)\varphi^h_{\theta,e}(y),
   \qquad u_e\in L^2(0,l_e).
\end{equation}
Let $G^h_{\mathrm{cell}}$ be the metric graph with edge intervals
$(0,l_e)$ and set $\mathfrak H_{g,h}
   :=L^2(G^h_{\mathrm{cell}})\oplus\mathbb C.$
We introduce two identification maps. The first maps the graph fibre onto
the ground-mode sector of the reduced space:
\begin{equation}\label{eq:graph-reduced-identification}
   \Phi^g_{\theta,h}:=
   \left(\bigoplus_{e\in\mathscr E_{\mathrm{cell}}}
   \Phi^0_{\theta,h,e}\right)\oplus I_{\mathbb C}
   :\mathfrak H_{g,h}\longrightarrow\mathcal H_e\oplus\mathbb C.
\end{equation}
The second embeds the same graph fibre into the original physical fibre:
\begin{equation}\label{eq:graph-thin-identification-new}
   \Phi_{\theta,h}
   :=\mathfrak F^*\bJ_\theta\Phi^g_{\theta,h}
   :\mathfrak H_{g,h}\longrightarrow\mathcal H.
\end{equation}
Both maps are isometries.  The adjoint of the edge part is the magnetic
transverse average.

We next relate the scalar graph coordinate to the retained boundary channel.
Put
\[
   \eta_{\theta,h}:=h^{-1}e^{-i\theta\cdot x}
   \big|_{D_{\mathrm{cell}}}.
\]
Since $|\omega_e|=1$, one has
\[
   \eta_{\theta,h}\big|_{D^h_{e,\sigma}}
   =\alpha^h_{e,\sigma}\varphi^h_{\theta,e}.
\]
Its vertex lift is $h^{-1}e^{-i\theta\cdot x}$, and therefore
\[
   \Vert\hat\Pi_{v,\theta}\eta_{\theta,h}\Vert_{\mathcal H_v}^2
   =h^{-2}|X^h_{\mathrm{cell},v}|=|X_v^0|.
\]
Fix the phase of $J_\theta$ so that, with
$\kappa:=|X_v^0|^{1/2}$,
\begin{equation}\label{eq:graph-scalar-normalisation}
   \widetilde\Pi_{v,\theta}\eta_{\theta,h}=\kappa.
\end{equation}
Consequently, if the retained edge trace is
$c\eta_{\theta,h}$, then its endpoint ground-mode coefficients are
$a_e^hc$ and the scalar component of the reduced operator is
$\beta=\kappa c$.

\subsubsection{The phase-corrected, lattice and ambient-geometric graph fibres}

The exact phase-corrected form has domain
\begin{equation}\label{eq:phase-form-domain}
   \mathcal V^{\mathrm{ph}}_{\theta,h}:=
   \Bigl\{(u,\beta)\in
   \bigoplus_{e\in\mathscr E_{\mathrm{cell}}}H^1(0,l_e)
   \oplus\mathbb C:\ 
   \exists c\in\mathbb C\text{ such that}\ 
   \gamma_eu_e=a_e^h c\ \text{for all }e,
   \quad \beta=\kappa c\Bigr\},
\end{equation}
and
\begin{equation}\label{eq:phase-form}
   \mathfrak a^{\mathrm{ph}}_{\theta,h}
   [(u,\beta),(v,\gamma)]
   :=
   \sum_{e\in\mathscr E_{\mathrm{cell}}}
   \int_0^{l_e}
   D_{\theta,e}^{\mathrm{geo}}u_e\,
   \overline{D_{\theta,e}^{\mathrm{geo}}v_e}\,dx.
\end{equation}

The lattice and ambient-geometric fibres have the common form domain
\begin{equation}
\label{eq:common-graph-form-domain}
   \mathcal V^0_h:=
   \Bigl\{(u,\beta)
   \in\bigoplus_{e\in\mathscr E_{\mathrm{cell}}}H^1(0,l_e)
   \oplus\mathbb C:\ 
   \exists c\in\mathbb C\ \text{such that}\ 
   \gamma_eu_e=\mathbf1 c\ \text{for all }e,
   \quad \beta=\kappa c\Bigr\}.
\end{equation}
On this domain define
\begin{align}
   \mathfrak a^{\mathrm{lat}}_{\theta,h}
   [(u,\beta),(v,\gamma)]
   &:=
   \sum_{e\in\mathscr E_{\mathrm{cell}}}
   \int_0^{l_e}
   D_{\theta,e}^{\mathrm{lat}}u_e\,
   \overline{D_{\theta,e}^{\mathrm{lat}}v_e}\,dx,
   \label{eq:lattice-form}\\
   \mathfrak a^{\mathrm{geo}}_{\theta,h}
   [(u,\beta),(v,\gamma)]
   &:=
   \sum_{e\in\mathscr E_{\mathrm{cell}}}
   \int_0^{l_e}
   D_{\theta,e}^{\mathrm{geo}}u_e\,
   \overline{D_{\theta,e}^{\mathrm{geo}}v_e}\,dx.
   \label{eq:geometric-form}
\end{align}
We denote the associated operators by
$\mathcal A^{\mathrm{ph}}_{\theta,h}$,
$\mathcal A^{\mathrm{lat}}_{\theta,h}$, and
$\mathcal A^{\mathrm{geo}}_{\theta,h}$, respectively.

\begin{prop}[Self-adjoint graph fibres]\label{prop:selfadjoint-graph-fibres}
For every $h>0$ for which the standing geometric assumptions hold, and
every $\theta\in Q'$, the three forms just defined are
densely defined, closed, symmetric and non-negative in
$\mathfrak H_{g,h}$.  Hence the corresponding graph operators are
self-adjoint and non-negative. In each case, the operator domain consists
of the elements of the corresponding form domain for which
$u_e\in H^2(0,l_e)$ on every edge.

The phase-corrected operator acts as
\[
 \mathcal A^{\mathrm{ph}}_{\theta,h}
 \begin{pmatrix}u\\ \kappa c\end{pmatrix}
 =
 \begin{pmatrix}
    (-(D_{\theta,e}^{\mathrm{geo}})^2u_e)_e\\[0.4em]
    \displaystyle
    \kappa^{-1}
    \sum_{e\in\mathscr E_{\mathrm{cell}}}
    (a_e^h)^*\partial_{\nu,\theta}^{\mathrm{geo}}u_e
 \end{pmatrix},
 \qquad \gamma_eu_e=a_e^h c.
\]
For $r\in\{\mathrm{lat},\mathrm{geo}\}$,
\[
 \mathcal A^{r}_{\theta,h}
 \begin{pmatrix}u\\ \kappa c\end{pmatrix}
 =
 \begin{pmatrix}
    (-(D_{\theta,e}^{r})^2u_e)_e\\[0.4em]
    \displaystyle
    \kappa^{-1}
    \sum_{e\in\mathscr E_{\mathrm{cell}}}
    \mathbf1^*\partial_{\nu,\theta}^{r}u_e
 \end{pmatrix},
 \qquad \gamma_eu_e=\mathbf1c.
\]
\end{prop}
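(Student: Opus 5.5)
The plan is to treat all three forms within one framework, as a bounded-below form on a closed subspace of a product Sobolev space, and then read off the operator domains and actions by integration by parts on each edge. I would set $\mathcal W:=\bigoplus_e H^1(0,l_e)\oplus\mathbb C$ with its product norm. The map $(u,\beta)\mapsto(\gamma_eu_e)_e$ is continuous on $\mathcal W$, and the constraint in \eqref{eq:phase-form-domain}, respectively \eqref{eq:common-graph-form-domain}, says that the tuple $((\gamma_eu_e)_e,\beta)$ lies in the one-dimensional, hence closed, subspace of $\bigoplus_e\mathbb C^2\oplus\mathbb C$ spanned by $((a^h_e)_e,\kappa)$, respectively $((\mathbf 1)_e,\kappa)$. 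The parameter $c$ is uniquely determined, since $\kappa\neq0$ and $|\alpha^h_{e,\sigma}|=1$. Thus $\mathcal V^{\mathrm{ph}}_{\theta,h}$ and $\mathcal V^0_h$ are closed subspaces of $\mathcal W$.

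For density in $\mathfrak H_{g,h}$, I would use that $\bigoplus_eC_c^\infty(0,l_e)\oplus\{0\}$ lies in each domain and is dense in $L^2(G^h_{\mathrm{cell}})$. The pair $(u^{(1)},\kappa)$, with $u^{(1)}$ any $H^1$ function having the prescribed endpoint values $a^h_e$ (or $\mathbf1$), supplies the scalar direction. Closedness and non-negativity follow from the elementary bound $\|D^r_{\theta,e}u_e\|^2+\|u_e\|^2\asymp\|u_e\|_{H^1}^2$, which holds with constants depending only on $|\theta|\le\sqrt3\pi$, on each edge. Together with $|\beta|=\kappa|c|\le C\|u\|_{H^1}$, this shows that the form norm is equivalent to the $\mathcal W$-norm on the closed subspace. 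Symmetry is evident, and Kato's first representation theorem gives self-adjoint, non-negative operators.

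To identify the operators, take $(u,\kappa c)$ in the operator domain, with image $(f,g)$. I would first test with $(v,0)$, $v\in\bigoplus_eC_c^\infty(0,l_e)$. This gives $-(D^r_{\theta,e})^2u_e=f_e$ distributionally, so $u_e''\in L^2$ and $u_e\in H^2(0,l_e)$. Next I would test with a general $(v,\kappa d)$ and apply the one-dimensional Green identity displayed before the proposition, namely $\int D u\,\overline{Dv}=(-D^2u,v)+(\partial^r_{\nu,\theta}u_e,\gamma_ev)_{\mathbb C^2}$. Using $\gamma_ev_e=a^h_ed$ (or $\mathbf1d$), the boundary terms become $\overline d\sum_e(a^h_e)^*\partial^{\mathrm{geo}}_{\nu,\theta}u_e$. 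Equating this with $g\,\overline{\kappa d}$ gives $g=\kappa^{-1}\sum_e(a_e^h)^*\partial_{\nu,\theta}^{\mathrm{geo}}u_e$, and the same computation gives $\mathbf1^*\partial^r_{\nu,\theta}u_e$ in the other two cases.

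Conversely, if $(u,\kappa c)$ lies in the form domain and $u_e\in H^2$ on every edge, the same Green identity shows that $v\mapsto\mathfrak a[(u,\kappa c),v]$ is $L^2$-continuous with the displayed representative. Hence $(u,\kappa c)$ belongs to the operator domain, and the domain is exactly as stated. I expect no step to be a genuine obstacle. The only points needing care are the orientation signs in $\partial^r_{\nu,\theta}$, the conjugation convention (linear in the first slot, so the scalar pairing produces $(a_e^h)^*$), and the uniqueness of $c$, which makes the scalar component well defined.
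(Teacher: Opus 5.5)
Your proposal is correct and follows essentially the same route as the paper: closedness of the endpoint constraint by continuity of the traces, density, equivalence of the form norm with the product $H^1\oplus\mathbb C$-norm, Kato's representation theorem, and the one-dimensional Green identity with endpoint test vectors $a_e^h d$ or $\mathbf1 d$. Two small differences are worth noting. Your density argument ($C_c^\infty\oplus\{0\}$ together with a single lift $(u^{(1)},\kappa)$) is a slightly cleaner version of the paper's boundary-layer remark. Also, for the lattice potential $\theta\cdot n_e/l_e$ the norm-equivalence constants depend on $|n_e|/l_e$ as well as on $|\theta|$, which is harmless here because $h$ and $\theta$ are fixed.
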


\begin{proof}
The trace maps $H^1(0,l_e)\to\mathbb C^2$ are continuous, so the
constraints in \eqref{eq:phase-form-domain} and
\eqref{eq:common-graph-form-domain} are closed.  The domains are dense in
$\mathfrak H_{g,h}$: prescribed endpoint values may be imposed in boundary
layers whose $L^2$-mass tends to zero.  Since $\theta$ ranges over a
bounded set and the edge lengths are uniformly bounded above and below, the
form norm
\[
 \bigl(\mathfrak a[u]+\Vert u\Vert_{L^2(G^h_{\mathrm{cell}})}^2
 +|\beta|^2\bigr)^{1/2}
\]
is uniformly equivalent to the product $H^1\oplus\mathbb C$-norm on the
corresponding domain.  The representation theorem gives the self-adjoint
operators.  Their actions follow from the one-dimensional Green identity.  In
the phase-corrected case the endpoint test vector is $a_e^hd$; in the two
common-value cases it is $\mathbf1d$.
\end{proof}

The incidence phases also appear in the endpoint Dirichlet-to-Neumann
description. If $M^{\mathrm{geo}}_{e,\theta}(z)$ maps the two endpoint
values of a homogeneous edge solution to its outgoing magnetic derivatives,
the contribution of edge $e$ to the scalar vertex equation is $(a_e^h)^*M^{\mathrm{geo}}_{e,\theta}(z)a_e^h.$
In general this differs from
$\mathbf1^*M^{\mathrm{geo}}_{e,\theta}(z)\mathbf1$; the two endpoint
phases must therefore be retained separately.

\subsubsection{The exact ground-mode graph and the resonant reduction}

Let $P^h_{0,\theta}$ be the orthogonal projection in $\mathcal H_e$ onto
the transverse magnetic ground modes on all edge cylinders.  The range of
$P^h_{0,\theta}$, together with the scalar vertex component, is precisely
the range of $\Phi^g_{\theta,h}$.

\begin{prop}[Ground-mode decomposition]\label{prop:ground-mode-decomposition}
There is a self-adjoint operator $\mathcal A^\perp_{\theta,h}$ in
$(I-P^h_{0,\theta})\mathcal H_e$ such that, under the orthogonal
decomposition of $\mathcal H_e\oplus\mathbb C$ into its ground-mode and
excited-mode sectors,
\begin{equation}\label{eq:reduced-thin-direct-sum}
 \mathcal A^{\mathrm{eff}}_{\theta,h}
 =
 \Phi^g_{\theta,h}\mathcal A^{\mathrm{ph}}_{\theta,h}
 (\Phi^g_{\theta,h})^*
 \oplus \mathcal A^\perp_{\theta,h}.
\end{equation}
Moreover, $\mathcal A^\perp_{\theta,h}\geq ch^{-2}I$
with $c>0$ independent of $h$ and $\theta$.  Consequently,
\begin{equation}\label{eq:reduced-to-phase-resolvent}
 \left\Vert
   (\mathcal A^{\mathrm{eff}}_{\theta,h}-z)^{-1}
   -
   \Phi^g_{\theta,h}
   (\mathcal A^{\mathrm{ph}}_{\theta,h}-z)^{-1}
   (\Phi^g_{\theta,h})^*
 \right\Vert
 \leq Ch^2,
 \qquad z\in K.
\end{equation}
\end{prop}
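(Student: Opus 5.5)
The plan is to work directly with the closed form $\mathfrak a^{\mathrm{eff}}_{\theta,h}$ of \eqref{eq:reduced-effective-form} on $\mathcal V^{\mathrm{eff}}_{\theta,h}$. We show that it splits orthogonally with respect to the decomposition $\operatorname{Ran}\Phi^g_{\theta,h}\oplus\bigl((I-P^h_{0,\theta})\mathcal H_e\bigr)$. On each cylinder $(0,l_e)\times h\omega_e$ we apply the transverse gauge $e^{i\theta_{\perp,e}\cdot y}$ of Theorem~\ref{lem:edge-Pi-bound} and expand in the Neumann basis $\{\chi^h_k\}$. In the gauged variables $(\nabla+i\theta)$ becomes $(\partial_s+i\theta_e,\nabla_y)$. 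Hence the energy of $u=\sum_k u_k\chi^h_k$ is
\[
\sum_k\Bigl(\Vert(\partial_s+i\theta_e)u_k\Vert^2+\mu^h_k\Vert u_k\Vert^2\Bigr),
\]
with no cross terms between transverse modes. This is the same mode decoupling used in \eqref{eq:edge-mode-orthogonality}.

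Next we examine the constraint defining $\mathcal V^{\mathrm{eff}}_{\theta,h}$. The trace $u|_{D_{\mathrm{cell}}}$ must lie in $\hat{\mathcal E}_\theta=\operatorname{span}\{\eta_{\theta,h}\}$. Since $\eta_{\theta,h}|_{D^h_{e,\sigma}}=\alpha^h_{e,\sigma}\varphi^h_{\theta,e}$, this says that the endpoint traces of all excited coefficients $u_k$, $k\ge1$, vanish. It also says that the ground coefficients satisfy $\gamma_e u_0=a^h_e c$ with $\beta=\kappa c$, by \eqref{eq:graph-scalar-normalisation}. Therefore
\[
\mathcal V^{\mathrm{eff}}_{\theta,h}=\Phi^g_{\theta,h}\mathcal V^{\mathrm{ph}}_{\theta,h}\oplus\mathcal V^\perp,
\qquad
\mathcal V^\perp:=\Bigl\{u\in H^1(X^h_{\mathrm{cell},e}):P^h_{0,\theta}u=0,\ u|_{D_{\mathrm{cell}}}=0\Bigr\}.
\]
On the first summand the form restricts to $\mathfrak a^{\mathrm{ph}}_{\theta,h}$ of \eqref{eq:phase-form}, since $\theta_e=\theta\cdot t_e=\theta^{\mathrm{geo}}_e$ and $\Phi^0_{\theta,h,e}$ is an isometry. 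The cross terms vanish by the mode orthogonality.

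A closed form that is an orthogonal sum of closed forms on complementary reducing subspaces has associated operator equal to the direct sum of the associated operators. This gives \eqref{eq:reduced-thin-direct-sum}, with $\mathcal A^\perp_{\theta,h}$ the operator of the restricted form on $\mathcal V^\perp$. This form is closed because $\mathcal V^\perp$ is closed in $H^1$, and it is densely defined in $(I-P^h_{0,\theta})\mathcal H_e$. For $u\in\mathcal V^\perp$ the form is bounded below by
\[
\sum_{k\ge1}\mu^h_k\Vert u_k\Vert^2\ge h^{-2}\mu_1\Vert u\Vert^2.
\]
Here $\mu_1$ is the first positive Neumann eigenvalue of $\omega_e$, which is uniformly bounded below by Assumption~\ref{ass:standing-geometry}. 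This proves $\mathcal A^\perp_{\theta,h}\ge ch^{-2}$.

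For \eqref{eq:reduced-to-phase-resolvent}, the resolvent splits as the direct sum of the two resolvents, and $\Phi^g_{\theta,h}$ is a unitary onto the ground sector. The difference is therefore $(\mathcal A^\perp_{\theta,h}-z)^{-1}\oplus0$. Its norm is at most
\[
\operatorname{dist}\bigl(z,[ch^{-2},\infty)\bigr)^{-1}\le 2c^{-1}h^2
\]
once $ch^{-2}\ge 2R$.

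The main point requiring care is the domain identification. One must check that the trace constraint really decouples as stated, that is, that $\hat{\mathcal E}_\theta\subset\mathcal E_0$ as in \eqref{eq:P0P}. One must also check that $P^h_{0,\theta}$ preserves $H^1$ and commutes with the trace, so that both summands are genuinely closed subspaces of the form domain. This requires that $P^h_{0,\theta}$ act fibrewise in $s$ and that the transverse gauge be smooth on each straight cylinder, both of which hold under Assumption~\ref{ass:standing-geometry}.
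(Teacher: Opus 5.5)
Your proposal is correct and follows essentially the same route as the paper's proof. Both arguments remove the transverse magnetic phase, expand in the Neumann modes of $h\omega_e$ with no cross terms, and split the trace constraint into $\gamma_e u_{e,0}=a_e^h c$, $\beta=\kappa c$ and zero endpoint traces for the excited modes; both then obtain the operator direct sum from the representation theorem and the $\inf_e\mu_{e,1}h^{-2}$ lower bound, and deduce the $O(h^2)$ resolvent estimate from the spectral theorem. Your explicit checks that $P^h_{0,\theta}$ preserves $H^1$ and commutes with the trace, and your explicit bound $2c^{-1}h^2$ once $ch^{-2}\ge 2R$, fill in details the paper leaves implicit.
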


\begin{proof}
Remove the transverse magnetic phase on each cylinder and expand
\[
 u_e(x,y)=\sum_{n\geq0}u_{e,n}(x)\chi^h_{e,n}(y),
\]
where $\{\chi^h_{e,n}\}_{n\geq0}$ is the orthonormal Neumann eigenbasis of
$h\omega_e$,
\[
 -\Delta_y\chi^h_{e,n}=h^{-2}\mu_{e,n}\chi^h_{e,n},
 \qquad
 \mu_{e,0}=0,\qquad \inf_e\mu_{e,1}>0.
\]
The product geometry, the lateral Neumann condition and the magnetic
differential expression preserve the transverse index.

The domain condition of $\mathcal A^{\mathrm{eff}}_{\theta,h}$ requires the
endpoint trace to lie in the retained boundary channel.  By
\eqref{eq:graph-scalar-normalisation}, this means
\[
 \gamma_eu_{e,0}=a_e^hc,\qquad
 u_{e,n}(0)=u_{e,n}(l_e)=0\quad(n\geq1),\qquad
 \beta=\kappa c.
\]
In particular, the form domain in \eqref{eq:reduced-effective-form}
splits orthogonally into the ground-mode sector and the direct sum of the spaces $H^1_0(0,l_e)$ corresponding to
$n\geq1$, with finite total form energy.  On this decomposition the reduced form is
\[
 \mathfrak a^{\mathrm{ph}}_{\theta,h}[(u_0,\beta)]
 +
 \sum_{e\in\mathscr E_{\mathrm{cell}}}\sum_{n\geq1}
 \int_0^{l_e}
 \left(
   |D_{\theta,e}^{\mathrm{geo}}u_{e,n}|^2
   +h^{-2}\mu_{e,n}|u_{e,n}|^2
 \right)\,dx.
\]
The excited modes have zero endpoint trace, and their contribution to the
scalar flux component vanishes by transverse orthogonality.  Hence the first
representation theorem gives the operator direct sum
\eqref{eq:reduced-thin-direct-sum}.  The second summand is the direct sum of
the longitudinal magnetic Dirichlet operators shifted by
$h^{-2}\mu_{e,n}$; therefore
$\mathcal A^\perp_{\theta,h}\geq ch^{-2}I$, uniformly in $h$ and
$\theta$.  The resolvent estimate follows from the spectral theorem.
\end{proof}

Combining Proposition~\ref{prop:ground-mode-decomposition} with
Theorems~\ref{thm:R-approx} and~\ref{thm:eff-fibre} gives the first graph
approximation.

\begin{theorem}[Phase-corrected fibre approximation]\label{thm:phase-fibre}
There exist $C>0$ and $h_0>0$, independent of
$\theta\in Q'$ and $z\in K$, such that, for $0<h<h_0$,
\begin{equation}\label{eq:phase-fibre-estimate}
 \left\Vert
   (\mathcal A^\theta_{0,\mathbb I}-z)^{-1}
   -
   \Phi_{\theta,h}
   (\mathcal A^{\mathrm{ph}}_{\theta,h}-z)^{-1}
   \Phi_{\theta,h}^*
 \right\Vert_{\mathcal H\to\mathcal H}
 \leq Ch.
\end{equation}
\end{theorem}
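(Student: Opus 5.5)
The estimate \eqref{eq:phase-fibre-estimate} will follow from a three-term triangle inequality, with each term handled by a result already proved. Write
\[
 (\mathcal A^\theta_{0,\mathbb I}-z)^{-1}
 -\Phi_{\theta,h}(\mathcal A^{\mathrm{ph}}_{\theta,h}-z)^{-1}\Phi_{\theta,h}^*
 =T_1+T_2,
\]
where
\[
 T_1:=(\mathcal A^\theta_{0,\mathbb I}-z)^{-1}-\mathcal R_{\theta,h}(z),
 \qquad
 T_2:=\mathcal R_{\theta,h}(z)
 -\Phi_{\theta,h}(\mathcal A^{\mathrm{ph}}_{\theta,h}-z)^{-1}\Phi_{\theta,h}^*.
\]
By Theorem~\ref{thm:R-approx}, $\Vert T_1\Vert\le Ch$ uniformly in $\theta\in Q'$ and $z\in K$.

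For $T_2$, first use Theorem~\ref{thm:eff-fibre} to write
\[
 \mathcal R_{\theta,h}(z)
 =\mathfrak F^*\bJ_\theta(\mathcal A^{\mathrm{eff}}_{\theta,h}-z)^{-1}\bJ_\theta^*\mathfrak F.
\]
By the definition \eqref{eq:graph-thin-identification-new}, $\Phi_{\theta,h}=\mathfrak F^*\bJ_\theta\Phi^g_{\theta,h}$, and hence $\Phi_{\theta,h}^*=(\Phi^g_{\theta,h})^*\bJ_\theta^*\mathfrak F$. Substituting both expressions gives
\[
 T_2=\mathfrak F^*\bJ_\theta\Bigl[(\mathcal A^{\mathrm{eff}}_{\theta,h}-z)^{-1}
 -\Phi^g_{\theta,h}(\mathcal A^{\mathrm{ph}}_{\theta,h}-z)^{-1}(\Phi^g_{\theta,h})^*\Bigr]\bJ_\theta^*\mathfrak F.
\]
The operator $\mathfrak F$ is unitary and $\bJ_\theta$ is an isometry, so both have norm at most one. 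Therefore $\Vert T_2\Vert$ is bounded by the norm of the bracket. The bracket is controlled by \eqref{eq:reduced-to-phase-resolvent} in Proposition~\ref{prop:ground-mode-decomposition}, which gives $\Vert T_2\Vert\le Ch^2\le Ch$ for $h<1$.

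Adding the two bounds gives \eqref{eq:phase-fibre-estimate}. Take $h_0$ to be the minimum of the thresholds in Theorem~\ref{thm:R-approx}, Theorem~\ref{thm:eff-fibre} and Proposition~\ref{prop:ground-mode-decomposition}, and take $C$ to be the sum of the constants. None of these depends on $\theta$ or on $z\in K$.

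The only step that needs care is the bookkeeping of the identification maps. Two points must be checked. First, $\Phi^g_{\theta,h}$ is an isometry whose range is exactly the ground-mode sector, which is used implicitly in \eqref{eq:reduced-thin-direct-sum}. Second, the phase normalisation \eqref{eq:graph-scalar-normalisation} of $J_\theta$ is the same one used in forming $\bJ_\theta$ in Theorem~\ref{thm:eff-fibre}. With these checked, the factorisation of $\Phi_{\theta,h}$ is consistent and no further cancellation is needed.
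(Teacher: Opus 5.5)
Your proposal is correct and matches the paper's proof. Both split off $\mathcal R_{\theta,h}(z)$, use Theorem~\ref{thm:R-approx} for the $O(h)$ term, and use Theorem~\ref{thm:eff-fibre}, the factorisation $\Phi_{\theta,h}=\mathfrak F^*\bJ_\theta\Phi^g_{\theta,h}$ and Proposition~\ref{prop:ground-mode-decomposition} to obtain the $O(h^2)$ term.
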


\begin{proof}
By Theorem~\ref{thm:eff-fibre}, the lifted reduced resolvent is
$\mathcal R_{\theta,h}(z)$.  Proposition~\ref{prop:ground-mode-decomposition}
and the identity
$\Phi_{\theta,h}=\mathfrak F^*\bJ_\theta\Phi^g_{\theta,h}$
replace it by the phase-corrected graph resolvent with error $O(h^2)$.
Theorem~\ref{thm:R-approx} contributes $O(h)$.
\end{proof}

\subsubsection{The lattice graph and the geometric collapse}

Choose real phases $\gamma^h_{e,\sigma}:=-\theta\cdot x^h_{e,\sigma},$ $\alpha^h_{e,\sigma}=e^{i\gamma^h_{e,\sigma}},$ and interpolate them affinely on $[0,l_e]$:
\[
   \chi^h_e(x)
   :=\left(1-\frac{x}{l_e}\right)\gamma^h_{e,-}
     +\frac{x}{l_e}\gamma^h_{e,+}.
\]
Then
\begin{equation}\label{eq:chi-bounds}
   \Vert\chi_e^h\Vert_{L^\infty(0,l_e)}
   +\Vert(\chi_e^h)'\Vert_{L^\infty(0,l_e)}
   \leq Ch^{2/3}.
\end{equation}
Define the unitary operator $W_{\theta,h}$ in $\mathfrak H_{g,h}$ by
\[
   (W_{\theta,h}u)_e(x):=e^{i\chi_e^h(x)}u_e(x),
   \qquad W_{\theta,h}\beta:=\beta.
\]
It maps $\mathcal V_h^0$ onto
$\mathcal V^{\mathrm{ph}}_{\theta,h}$, and
\begin{equation}\label{eq:W-close-identity}
   \Vert W_{\theta,h}-I\Vert_{\mathfrak H_{g,h}\to\mathfrak H_{g,h}}
   \leq Ch^{2/3}.
\end{equation}
The centre-line identity
$n_e+x^h_{e,+}-x^h_{e,-}=l_et_e$ gives
\begin{equation}\label{eq:exact-lattice-gauge}
 \theta_e^{\mathrm{geo}}+(\chi_e^h)'
 =\theta\cdot t_e-
   \frac{\theta\cdot(x^h_{e,+}-x^h_{e,-})}{l_e}
 =\frac{\theta\cdot n_e}{l_e}
 =\theta_{e,h}^{\mathrm{lat}}.
\end{equation}
Hence
\begin{equation}\label{eq:exact-form-gauge}
 \mathfrak a^{\mathrm{ph}}_{\theta,h}
 [W_{\theta,h}(u,\beta),W_{\theta,h}(v,\gamma)]
 =
 \mathfrak a^{\mathrm{lat}}_{\theta,h}
 [(u,\beta),(v,\gamma)].
\end{equation}
In particular, one has
\begin{equation}\label{eq:phase-lattice-unitary}
 \mathcal A^{\mathrm{ph}}_{\theta,h}
 =W_{\theta,h}\mathcal A^{\mathrm{lat}}_{\theta,h}W_{\theta,h}^*.
\end{equation}
This is the standard Floquet gauge: the total magnetic phase on edge $e$
is $\theta\cdot n_e$.

Set
\begin{equation}\label{eq:lattice-identification}
   \Phi^{\mathrm{lat}}_{\theta,h}
   :=\Phi_{\theta,h}W_{\theta,h}.
\end{equation}
Theorem~\ref{thm:phase-fibre} and
\eqref{eq:phase-lattice-unitary} give the following fibre estimate.

\begin{theorem}[Lattice graph fibre approximation]\label{thm:lattice-fibre}
There exist $C>0$ and $h_0>0$, independent of
$\theta\in Q'$ and $z\in K$, such that, for $0<h<h_0$,
\begin{equation}\label{eq:lattice-fibre-estimate}
 \left\Vert
   (\mathcal A^\theta_{0,\mathbb I}-z)^{-1}
   -
   \Phi^{\mathrm{lat}}_{\theta,h}
   (\mathcal A^{\mathrm{lat}}_{\theta,h}-z)^{-1}
   (\Phi^{\mathrm{lat}}_{\theta,h})^*
 \right\Vert_{\mathcal H\to\mathcal H}
 \leq Ch.
\end{equation}
\end{theorem}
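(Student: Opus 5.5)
The estimate follows from Theorem~\ref{thm:phase-fibre} by an exact unitary substitution, so the $O(h)$ bound carries over with no loss. The plan is to rewrite the lattice lifted resolvent as the phase-corrected one, using the gauge identity \eqref{eq:phase-lattice-unitary} together with the definition \eqref{eq:lattice-identification} of $\Phi^{\mathrm{lat}}_{\theta,h}$.

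\emph{Step 1: unitarity of the gauge.} First verify \eqref{eq:phase-lattice-unitary} rigorously at the level of forms. $W_{\theta,h}$ is multiplication on each edge by the unimodular $C^1$ function $e^{i\chi_e^h}$ and is the identity on the scalar component, so it is unitary on $\mathfrak H_{g,h}$ and preserves $H^1(0,l_e)$. Since $\chi_e^h(0)=\gamma^h_{e,-}$ and $\chi_e^h(l_e)=\gamma^h_{e,+}$, we get $\gamma_e(W_{\theta,h}u)_e=a_e^h c$ whenever $\gamma_eu_e=\mathbf1 c$, while $\beta=\kappa c$ is untouched. Hence $W_{\theta,h}\mathcal V^0_h=\mathcal V^{\mathrm{ph}}_{\theta,h}$, and the inverse multiplication gives the reverse inclusion. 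Next, $D^{\mathrm{geo}}_{\theta,e}(e^{i\chi_e^h}u_e)=e^{i\chi_e^h}\bigl(u_e'+i(\theta_e^{\mathrm{geo}}+(\chi_e^h)')u_e\bigr)$. Combined with \eqref{eq:exact-lattice-gauge}, this yields \eqref{eq:exact-form-gauge}. Two closed forms related by a unitary map of form domains have unitarily equivalent associated operators, which is \eqref{eq:phase-lattice-unitary}.

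\emph{Step 2: substitution.} By functional calculus, for $z\in K$,
\[
(\mathcal A^{\mathrm{ph}}_{\theta,h}-z)^{-1}=W_{\theta,h}(\mathcal A^{\mathrm{lat}}_{\theta,h}-z)^{-1}W_{\theta,h}^*.
\]
Therefore
\[
\Phi_{\theta,h}(\mathcal A^{\mathrm{ph}}_{\theta,h}-z)^{-1}\Phi_{\theta,h}^*
=\Phi^{\mathrm{lat}}_{\theta,h}(\mathcal A^{\mathrm{lat}}_{\theta,h}-z)^{-1}(\Phi^{\mathrm{lat}}_{\theta,h})^*,
\]
because $(\Phi_{\theta,h}W_{\theta,h})^*=W_{\theta,h}^*\Phi_{\theta,h}^*$. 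Inserting this identity into \eqref{eq:phase-fibre-estimate} gives \eqref{eq:lattice-fibre-estimate} with the same $C$ and $h_0$, uniformly in $\theta$ and $z$. Note also that $\Phi^{\mathrm{lat}}_{\theta,h}$ is again an isometry.

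\emph{Main point to check.} The only real point is that the identity is exact, with no $O(h^{2/3})$ error from \eqref{eq:W-close-identity}. This hinges on the centre-line relation \eqref{eq:edge-centreline-geometry} used in \eqref{eq:exact-lattice-gauge}, and on the convention that the endpoint phases are the $\alpha^h_{e,\sigma}=e^{i\gamma^h_{e,\sigma}}$ at the correct incidences. Concretely, one must confirm that $(e,-)$ sits at $x=0$ and $(e,+)$ at $x=l_e$, so that the interpolation matches $a_e^h$ componentwise. Once this is confirmed, the result is purely algebraic.
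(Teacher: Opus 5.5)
Your proposal is correct and follows the paper's own argument. The paper likewise obtains the estimate by inserting the exact gauge equivalence \eqref{eq:phase-lattice-unitary}, derived at form level from \eqref{eq:exact-lattice-gauge}--\eqref{eq:exact-form-gauge}, into Theorem~\ref{thm:phase-fibre}, using $\Phi^{\mathrm{lat}}_{\theta,h}=\Phi_{\theta,h}W_{\theta,h}$; you are right that the identity is exact, so \eqref{eq:W-close-identity} plays no role and no $O(h^{2/3})$ loss occurs.
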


The ambient-geometric graph uses the same common endpoint values but retains
$\theta\cdot t_e$ in the magnetic derivative.  From
\eqref{eq:edge-centreline-geometry},
\begin{equation}\label{eq:lattice-geo-potential-difference}
 \left|
   \theta_{e,h}^{\mathrm{lat}}-\theta_e^{\mathrm{geo}}
 \right|
 =\frac{|\theta\cdot(x^h_{e,+}-x^h_{e,-})|}{l_e}
 \leq Ch^{2/3}.
\end{equation}

\begin{theorem}[Comparison with the ambient-geometric graph]
\label{thm:lattice-to-geometric}
There exist $C>0$ and $h_0>0$, independent of
$\theta\in Q'$ and $z\in K$, such that, for $0<h<h_0$,
\begin{align}
 \left\Vert
   (\mathcal A^{\mathrm{lat}}_{\theta,h}-z)^{-1}
   -(\mathcal A^{\mathrm{geo}}_{\theta,h}-z)^{-1}
 \right\Vert
 &\leq Ch^{2/3},                                      \label{eq:lattice-geo-resolvent}\\
 \left\Vert
   (\mathcal A^{\mathrm{ph}}_{\theta,h}-z)^{-1}
   -(\mathcal A^{\mathrm{geo}}_{\theta,h}-z)^{-1}
 \right\Vert
 &\leq Ch^{2/3}.                                      \label{eq:phase-geo-resolvent}
\end{align}
For $\theta=0$, all three graph operators coincide.
\end{theorem}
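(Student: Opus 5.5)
The plan is to compare $\mathcal A^{\mathrm{lat}}_{\theta,h}$ and $\mathcal A^{\mathrm{geo}}_{\theta,h}$ through their closed forms. Both live on the same form domain $\mathcal V^0_h$ and differ only by a bounded lower-order perturbation. For $r\in\{\mathrm{lat},\mathrm{geo}\}$ write $D^r_{\theta,e}=\frac{d}{dx}+i\theta^r_e$, and put $\delta_e:=\theta^{\mathrm{lat}}_{e,h}-\theta^{\mathrm{geo}}_e$. By \eqref{eq:lattice-geo-potential-difference} one has $|\delta_e|\le Ch^{2/3}$. Since $D^{\mathrm{lat}}_{\theta,e}=D^{\mathrm{geo}}_{\theta,e}+i\delta_e$, the difference of the forms is
\[
\mathfrak a^{\mathrm{lat}}_{\theta,h}[U,V]-\mathfrak a^{\mathrm{geo}}_{\theta,h}[U,V]
=\sum_e\int_0^{l_e}\bigl(i\delta_e u_e\overline{D^{\mathrm{geo}}_{\theta,e}v_e}
-i\delta_e D^{\mathrm{geo}}_{\theta,e}u_e\,\overline{v_e}
+\delta_e^2u_e\overline{v_e}\bigr)\,dx .
\]
This is bounded by $Ch^{2/3}\bigl(\mathfrak a^{\mathrm{geo}}[U]+\Vert U\Vert^2\bigr)^{1/2}\bigl(\mathfrak a^{\mathrm{geo}}[V]+\Vert V\Vert^2\bigr)^{1/2}$, uniformly in $\theta$.

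I will then use the standard second resolvent identity in form sense. Put $U=(\mathcal A^{\mathrm{lat}}-z)^{-1}f$ and $V=(\mathcal A^{\mathrm{geo}}-\bar z)^{-1}g$. Both lie in $\mathcal V^0_h$, and
\[
\bigl(((\mathcal A^{\mathrm{lat}}-z)^{-1}-(\mathcal A^{\mathrm{geo}}-z)^{-1})f,g\bigr)
=-\bigl(\mathfrak a^{\mathrm{lat}}-\mathfrak a^{\mathrm{geo}}\bigr)[U,V].
\]
It remains to bound the form norms uniformly. For $z\in K$ and a non-negative self-adjoint operator $A$ with form $\mathfrak a$, I will use $\mathfrak a[(A-z)^{-1}f]\le\Vert A(A-z)^{-1}\Vert\,\Vert(A-z)^{-1}\Vert\,\Vert f\Vert^2\le C_K\Vert f\Vert^2$ and $\Vert(A-z)^{-1}\Vert\le\varepsilon^{-1}$. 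These bounds are independent of $h$ and $\theta$. One point needs care: for $U$ I need a bound on $\mathfrak a^{\mathrm{geo}}[U]$, not $\mathfrak a^{\mathrm{lat}}[U]$. I will obtain it from $\Vert D^{\mathrm{geo}}u_e\Vert\le\Vert D^{\mathrm{lat}}u_e\Vert+|\delta_e|\Vert u_e\Vert$. Combining these gives \eqref{eq:lattice-geo-resolvent}.

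For \eqref{eq:phase-geo-resolvent} I will use \eqref{eq:phase-lattice-unitary}. Writing $W=W_{\theta,h}$,
\[
(\mathcal A^{\mathrm{ph}}-z)^{-1}-(\mathcal A^{\mathrm{geo}}-z)^{-1}
=W(\mathcal A^{\mathrm{lat}}-z)^{-1}W^*-(\mathcal A^{\mathrm{lat}}-z)^{-1}
+\bigl[(\mathcal A^{\mathrm{lat}}-z)^{-1}-(\mathcal A^{\mathrm{geo}}-z)^{-1}\bigr].
\]
The first difference is at most $2\varepsilon^{-1}\Vert W-I\Vert\le Ch^{2/3}$ by \eqref{eq:W-close-identity}, and the bracket is controlled by the first part. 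Finally, at $\theta=0$ one has $\alpha^h_{e,\sigma}=1$, so $a^h_e=\mathbf1$ and $\mathcal V^{\mathrm{ph}}=\mathcal V^0_h$. Also $\theta^{\mathrm{lat}}_e=\theta^{\mathrm{geo}}_e=0$, so the three forms coincide and so do the operators.

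The only step needing attention is the uniformity of the form-norm bounds for resolvent images. The estimate above uses only the spectral theorem, $|\operatorname{Im}z|\ge\varepsilon$, $|z|\le R$, and the uniform bound $|\delta_e|\le Ch^{2/3}$. It therefore requires no control of edge lengths beyond $l_-\le l_e\le l_+$, and no constants depending on $\theta$ enter. I do not expect a genuine obstacle.
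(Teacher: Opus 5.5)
Your proposal is correct and follows essentially the same route as the paper. It compares the forms on the common domain $\mathcal V^0_h$ using $|\theta^{\mathrm{lat}}_{e,h}-\theta^{\mathrm{geo}}_e|\le Ch^{2/3}$, obtains uniform form-norm bounds on resolvent images from spectral calculus (your sesquilinear second resolvent identity is what the paper phrases as the resolvent identity in $\mathcal B((\mathcal V^0_h)^*,\mathcal V^0_h)$), and gets the phase-corrected estimate by conjugating with $W_{\theta,h}$ and using $\Vert W_{\theta,h}-I\Vert\le Ch^{2/3}$. Your explicit form difference and your conversion of the bound on $\mathfrak a^{\mathrm{lat}}[U]$ into one on $\mathfrak a^{\mathrm{geo}}[U]$ just spell out the paper's appeal to the uniform equivalence of the two form norms.
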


\begin{proof}
The lattice and ambient-geometric forms have the same domain.  By
\eqref{eq:lattice-geo-potential-difference},
\[
 \left|
  \mathfrak a^{\mathrm{lat}}_{\theta,h}[U,V]
  -\mathfrak a^{\mathrm{geo}}_{\theta,h}[U,V]
 \right|
 \leq Ch^{2/3}\Vert U\Vert_{\mathcal V_h^0}
                    \Vert V\Vert_{\mathcal V_h^0},
\]
where $\mathcal V_h^0$ carries either of the uniformly equivalent form
norms.  For $j\in\{\mathrm{lat},\mathrm{geo}\}$, the operator induced by
$\mathfrak a^j_{\theta,h}-z(\cdot,\cdot)$ has a uniformly bounded inverse from
$(\mathcal V_h^0)^*$ to $\mathcal V_h^0$.  Indeed, if
$\mathcal A^j_{\theta,h}$ is the associated non-negative operator, then
spectral calculus gives
\[
 \sup_{\lambda\geq0}
 \frac{1+\lambda}{|\lambda-z|}
 \leq C_K,
 \qquad z\in K,
\]
and the form norms are uniformly equivalent.  The resolvent identity in
$\mathcal B((\mathcal V_h^0)^*,\mathcal V_h^0)$ therefore proves
\eqref{eq:lattice-geo-resolvent}.

By \eqref{eq:phase-lattice-unitary}, one has
\[
 (\mathcal A^{\mathrm{ph}}_{\theta,h}-z)^{-1}
 =W_{\theta,h}
  (\mathcal A^{\mathrm{lat}}_{\theta,h}-z)^{-1}
  W_{\theta,h}^*.
\]
Hence \eqref{eq:phase-geo-resolvent} follows from
\eqref{eq:lattice-geo-resolvent}, \eqref{eq:W-close-identity}, and the
uniform graph resolvent bounds.  If $\theta=0$, the endpoint phases and both
magnetic potentials vanish.
\end{proof}

\begin{theorem}[Ambient-geometric fibre approximation]\label{thm:graph-fibre}
There exist $C>0$ and $h_0>0$, independent of
$\theta\in Q'$ and $z\in K$, such that, for $0<h<h_0$,
\begin{equation}\label{eq:geometric-fibre-estimate}
 \left\Vert
   (\mathcal A^\theta_{0,\mathbb I}-z)^{-1}
   -
   \Phi_{\theta,h}
   (\mathcal A^{\mathrm{geo}}_{\theta,h}-z)^{-1}
   \Phi_{\theta,h}^*
 \right\Vert_{\mathcal H\to\mathcal H}
 \leq Ch^{2/3}.
\end{equation}
For $\theta=0$, the right-hand side may be replaced by $Ch$.
\end{theorem}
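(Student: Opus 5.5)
The estimate follows from a triangle inequality through the phase-corrected graph fibre. I will write
\[
 (\mathcal A^\theta_{0,\mathbb I}-z)^{-1}-\Phi_{\theta,h}(\mathcal A^{\mathrm{geo}}_{\theta,h}-z)^{-1}\Phi_{\theta,h}^*
 =T_1+T_2,
\]
where
\[
 T_1:=(\mathcal A^\theta_{0,\mathbb I}-z)^{-1}-\Phi_{\theta,h}(\mathcal A^{\mathrm{ph}}_{\theta,h}-z)^{-1}\Phi_{\theta,h}^*,
 \qquad
 T_2:=\Phi_{\theta,h}\bigl[(\mathcal A^{\mathrm{ph}}_{\theta,h}-z)^{-1}-(\mathcal A^{\mathrm{geo}}_{\theta,h}-z)^{-1}\bigr]\Phi_{\theta,h}^*.
\]
By Theorem~\ref{thm:phase-fibre}, $\Vert T_1\Vert\leq Ch$ uniformly in $\theta\in Q'$ and $z\in K$.

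For $T_2$, I will use that $\Phi_{\theta,h}=\mathfrak F^*\bJ_\theta\Phi^g_{\theta,h}$ is a composition of isometries. Hence $\Vert\Phi_{\theta,h}\Vert\leq1$ and $\Vert\Phi_{\theta,h}^*\Vert\leq1$. It follows that $\Vert T_2\Vert$ is bounded by the graph-level resolvent difference, and \eqref{eq:phase-geo-resolvent} in Theorem~\ref{thm:lattice-to-geometric} gives $\Vert T_2\Vert\leq Ch^{2/3}$.

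Adding the two bounds and using $h\leq h^{2/3}$ for $h<1$ proves \eqref{eq:geometric-fibre-estimate}. The constants are those from Theorems~\ref{thm:phase-fibre} and~\ref{thm:lattice-to-geometric}, so they are independent of $\theta$ and $z$. I will take $h_0$ to be the minimum of the two thresholds (and at most $1$).

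For $\theta=0$, the last statement of Theorem~\ref{thm:lattice-to-geometric} says that the three graph operators coincide. Indeed, all $\alpha^h_{e,\sigma}=1$, so $\mathcal V^{\mathrm{ph}}_{0,h}=\mathcal V^0_h$, and both magnetic potentials vanish. Therefore $T_2=0$ and only the $O(h)$ bound on $T_1$ remains.

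There is essentially no obstacle in this argument. The only point to check is that the identification map in \eqref{eq:geometric-fibre-estimate} is the same $\Phi_{\theta,h}$ that appears in Theorem~\ref{thm:phase-fibre}. This holds because the phase-corrected and ambient-geometric fibres act in the same Hilbert space $\mathfrak H_{g,h}$, and no gauge $W_{\theta,h}$ is inserted here, unlike in the lattice case. The $h^{2/3}$ loss comes entirely from the graph-level comparison \eqref{eq:phase-geo-resolvent}.
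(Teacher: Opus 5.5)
Your proposal is correct and is essentially the paper's proof. The paper likewise combines Theorem~\ref{thm:phase-fibre} with the graph-level estimate \eqref{eq:phase-geo-resolvent} of Theorem~\ref{thm:lattice-to-geometric} through the isometry $\Phi_{\theta,h}$, and notes that the graph comparison error vanishes at $\theta=0$.
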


\begin{proof}
Combine Theorems~\ref{thm:phase-fibre} and~\ref{thm:lattice-to-geometric}.  At $\theta=0$, the graph comparison error
vanishes.
\end{proof}

\begin{rmk}[The finite-$h$ and limiting graphs]
The edge lengths in the graph fibres are the exact finite-thickness
centre-line lengths $l_e=l_e^h$.  Relation
\eqref{eq:edge-centreline-geometry} implies
$l_e^h=|n_e|+O(h^{2/3})$ and
$t_e^h=n_e/|n_e|+O(h^{2/3})$ for the straight cell-to-cell connectors
considered here.  If the centre-line lengths are independent of $h$, the
lattice graph in Theorem~\ref{thm:lattice-fibre} is already a fixed graph.
Otherwise, comparison with the fixed graph having edge lengths $|n_e|$
requires a separate quasi-unitary identification of metric graphs with
varying edge lengths.  Such a comparison is not used in the present paper:
the principal $O(h)$ theorem is formulated for the exact finite-$h$
periodic graph dictated by the thin geometry.
\end{rmk}

\subsection{Spectral bands and gap stability}
\label{subsec:spectral-bands}

We now deduce quantitative fibre spectral approximation and gap stability
from the uniform norm-resolvent estimates. For $\theta\in Q'$, let
\[
 \Sigma_h(\theta):=\sigma(\mathcal A^\theta_{0,\mathbb I}),
 \qquad
 \Sigma_h^{\mathrm{lat}}(\theta)
 :=\sigma(\mathcal A^{\mathrm{lat}}_{\theta,h}),
 \qquad
 \Sigma_h^{\mathrm{geo}}(\theta)
 :=\sigma(\mathcal A^{\mathrm{geo}}_{\theta,h}).
\]
All three fibre operators have compact resolvent.  The lattice and geometric
forms have a common $\theta$-independent domain and depend polynomially on
$\theta$, so they form analytic families of type (B).  The physical fibre
forms are defined on the fixed space $H^1(X_{\mathrm{cell}}^h)$ and have the
same property.  We regard $Q'=[-\pi,\pi)^3$ as a fundamental representative
of the compact Brillouin torus for the physical and lattice families.  Their
fibres at opposite faces are related by the standard Floquet gauge.  The
ambient-geometric formula and the estimates above extend continuously from
$Q'$ to the closed cube $\overline Q'=[-\pi,\pi]^3$, but its values on
opposite faces need not be gauge-equivalent.  Consequently, the fibre
eigenvalues depend continuously on their respective parameter spaces, and the
three operator fields are measurable.  The physical and lattice families are
Floquet families of periodic operators.  The ambient-geometric family is used
only as a decomposable comparison family; it need not be the Floquet family
of a local periodic metric-graph differential operator.

\begin{cor}[Uniform fibre spectral approximation]\label{cor:fibre-spectrum}
For every $E>0$ there exists $C_E>0$ such that, for all sufficiently
small $h$ and all $\theta\in Q'$,
\begin{align}
 \Sigma_h(\theta)\cap[0,E]
 &\subset
 \bigl(\Sigma_h^{\mathrm{lat}}(\theta)\bigr)_{C_Eh},
 &
 \Sigma_h^{\mathrm{lat}}(\theta)\cap[0,E]
 &\subset
 \bigl(\Sigma_h(\theta)\bigr)_{C_Eh},                 \label{eq:lattice-spectral-inclusions}\\
 \Sigma_h(\theta)\cap[0,E]
 &\subset
 \bigl(\Sigma_h^{\mathrm{geo}}(\theta)\bigr)_{C_Eh^{2/3}},
 &
 \Sigma_h^{\mathrm{geo}}(\theta)\cap[0,E]
 &\subset
 \bigl(\Sigma_h(\theta)\bigr)_{C_Eh^{2/3}}.          \label{eq:geo-spectral-inclusions}
\end{align}
Here $(S)_\delta$ denotes the open $\delta$-neighbourhood of
$S\subset\mathbb R$.  At $\theta=0$, the exponent $2/3$ in the second
line may be replaced by $1$.
\end{cor}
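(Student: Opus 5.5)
The inclusions follow from the fibre estimates of Theorems~\ref{thm:lattice-fibre} and~\ref{thm:graph-fibre}, applied with the admissible choice $K=\{i,-i\}$, together with an elementary spectral-mapping argument for normal operators. The only thing to watch is that the identification maps are isometries rather than unitaries. Write $A:=\mathcal A^\theta_{0,\mathbb I}$, $B:=\mathcal A^{\mathrm{lat}}_{\theta,h}$, $\Phi:=\Phi^{\mathrm{lat}}_{\theta,h}$ and $\delta:=Ch$, where $C$ is the constant of \eqref{eq:lattice-fibre-estimate} at $z=i$. Put $R_A:=(A-i)^{-1}$ and $R_B:=(B-i)^{-1}$, so that $\Vert R_A-\Phi R_B\Phi^*\Vert\le\delta$ uniformly in $\theta$. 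The geometric line is the same argument with $\Phi_{\theta,h}$, $\mathcal A^{\mathrm{geo}}_{\theta,h}$ and $\delta=Ch^{2/3}$ from \eqref{eq:geometric-fibre-estimate}. At $\theta=0$ that theorem gives $\delta=Ch$, which yields the final remark.

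\emph{Step 1 (graph spectrum near thin spectrum).} Let $\lambda\in\Sigma^{\mathrm{lat}}_h(\theta)\cap[0,E]$. Since $B$ has compact resolvent, there is a normalised $v$ with $Bv=\lambda v$. Put $u:=\Phi v$. Then $\Vert u\Vert=1$, $\Phi^*u=v$, and $\Phi R_B\Phi^*u=(\lambda-i)^{-1}u$. Hence
\[
\Vert(R_A-(\lambda-i)^{-1})u\Vert\le\delta .
\]
Because $R_A$ is normal, $\operatorname{dist}\bigl((\lambda-i)^{-1},\sigma(R_A)\bigr)\le\delta$. By the spectral mapping theorem, $\sigma(R_A)\setminus\{0\}=\{(s-i)^{-1}:s\in\Sigma_h(\theta)\}$. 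Choose $s\in\Sigma_h(\theta)$ with $|(\lambda-i)^{-1}-(s-i)^{-1}|\le 2\delta$; this is possible once $2\delta<|\lambda-i|^{-1}$, since the point $0$ is then too far away to be the nearest spectral point. Using
\[
\bigl|(\lambda-i)^{-1}-(s-i)^{-1}\bigr|=\frac{|\lambda-s|}{|\lambda-i|\,|s-i|},
\]
this forces $|s|\le 2E+2$ for small $h$. Otherwise the left-hand side would be at least $c_E>0$, contradicting $2\delta<c_E$. Hence $|\lambda-s|\le 2\delta|\lambda-i||s-i|\le C_E\delta$.

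\emph{Step 2 (thin spectrum near graph spectrum).} Let $\lambda\in\Sigma_h(\theta)\cap[0,E]$. Since $A$ has compact resolvent, there is a normalised $u$ with $Au=\lambda u$. Put $w:=\Phi^*u$. The estimate gives
\[
\Vert\Phi R_Bw-(\lambda-i)^{-1}u\Vert\le\delta .
\]
First, $\Vert R_B\Vert\le1$ and $\Phi$ is isometric, so
\[
\Vert w\Vert\ge\Vert\Phi R_Bw\Vert\ge|\lambda-i|^{-1}-\delta\ge c_E>0 .
\]
Second, applying the contraction $\Phi^*$ (note $\Phi^*\Phi=I$) gives $\Vert(R_B-(\lambda-i)^{-1})w\Vert\le\delta$. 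Normality of $R_B$ then yields $\operatorname{dist}\bigl((\lambda-i)^{-1},\sigma(R_B)\bigr)\le\delta/c_E$. The same spectral-mapping computation as in Step~1 gives $s\in\Sigma^{\mathrm{lat}}_h(\theta)$ with $|\lambda-s|\le C_E\delta$.

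\emph{Main obstacle.} The delicate point is Step~2. Because $\Phi$ is only an isometry, a thin eigenvector could a priori be nearly orthogonal to $\operatorname{Ran}\Phi$. The lower bound on $\Vert\Phi^*u\Vert$ rules this out, and it uses precisely that $(\lambda-i)^{-1}$ stays away from $0$ for $\lambda\le E$. This is also why the constant depends on $E$. All constants depend only on $E$ and on the $\theta$-uniform constants in Theorems~\ref{thm:lattice-fibre} and~\ref{thm:graph-fibre}, so the inclusions hold uniformly in $\theta\in Q'$. Enlarging $C_E$ slightly turns the closed $C_E\delta$-bounds into the open neighbourhoods in \eqref{eq:lattice-spectral-inclusions} and \eqref{eq:geo-spectral-inclusions}.
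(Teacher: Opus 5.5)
Your proposal is correct and is essentially the paper's argument. Both proofs work with the resolvents at $z=i$, use normality and the spectral mapping $\lambda\mapsto(\lambda-i)^{-1}$, exclude the spurious point $0$ via $|(\lambda-i)^{-1}|\geq(1+E^2)^{-1/2}$, and pass back to energies through a uniformly Lipschitz inversion.

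The only difference is packaging. The paper notes that $\Phi R_B\Phi^*$ is itself a compact normal operator with spectrum $\sigma(R_B)\cup\{0\}$, and then quotes the Hausdorff-distance bound for two normal operators. The complement of $\operatorname{Ran}\Phi$ is thereby absorbed into the point $0$, so your separate lower bound on $\Vert\Phi^*u\Vert$ in Step~2 is not needed.
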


\begin{proof}
Apply Theorems~\ref{thm:lattice-fibre} and~\ref{thm:graph-fibre} at one fixed
non-real point, say $z=i$; the preceding arguments apply with the conjugation-invariant set
$K=\{i,-i\}$.  The physical resolvent and each lifted graph resolvent are
compact normal operators.  The non-zero spectrum of a lifted graph resolvent
is exactly $\{(\lambda-i)^{-1}:\lambda\in\Sigma(\theta)\}$, while zero is
the only additional spectral point.  For normal operators the Hausdorff
distance between the spectra is bounded by the operator-norm distance.  If
$\lambda\in[0,E]$, then $|(\lambda-i)^{-1}|\geq(1+E^2)^{-1/2}$.  Thus,
for sufficiently small $h$, a spectral point paired with
$(\lambda-i)^{-1}$ cannot be the additional point zero.  The inverse map
$\mu\mapsto i+\mu^{-1}$ is uniformly Lipschitz on the resulting compact
set away from zero, and the stated inclusions follow.
\end{proof}

For $r\in\{\mathrm{lat},\mathrm{geo}\}$, define the physical and
lattice band spectra, and the ambient-geometric comparison spectral set, below
energy $E$ by
\[
 \mathfrak S_h(E)
 :=\overline{\bigcup_{\theta\in Q'}
       \bigl(\Sigma_h(\theta)\cap[0,E]\bigr)},
 \qquad
 \mathfrak S_h^{r}(E)
 :=\overline{\bigcup_{\theta\in Q'}
       \bigl(\Sigma_h^{r}(\theta)\cap[0,E]\bigr)}.
\]

\begin{cor}[Band and gap stability]\label{cor:band-gap}
\begin{samepage}
For every $E>0$ there exists $C_E>0$ such that, for all sufficiently
small $h$,
\begin{align*}
 \mathfrak S_h(E)
 &\subset(\mathfrak S_h^{\mathrm{lat}}(E+C_Eh))_{C_Eh},
 &
 \mathfrak S_h^{\mathrm{lat}}(E)
 &\subset(\mathfrak S_h(E+C_Eh))_{C_Eh},\\
 \mathfrak S_h(E)
 &\subset(\mathfrak S_h^{\mathrm{geo}}(E+C_Eh^{2/3}))_{C_Eh^{2/3}},
 &
 \mathfrak S_h^{\mathrm{geo}}(E)
 &\subset(\mathfrak S_h(E+C_Eh^{2/3}))_{C_Eh^{2/3}}.
\end{align*}
\end{samepage}
If $(a,b)\Subset(0,E)$ is an open gap of the lattice graph band spectrum
and $b-a>2C_Eh$, then
\[
   (a+C_Eh,b-C_Eh)
\]
is a gap of the physical periodic operator.  If $(a,b)$ is an open gap of
the ambient-geometric comparison spectral set and
$b-a>2C_Eh^{2/3}$, the analogous reduced interval is a physical gap.

More precisely, fix $r\in\{\mathrm{lat},\mathrm{geo}\}$.  For
$r=\mathrm{lat}$, let $Q_0$ be a compact subset of the Brillouin
torus.  For $r=\mathrm{geo}$, let $Q_0$ be a compact subset of the
closed parameter cube $\overline Q'$.  Suppose that there are
$h_0>0$ and $\delta>0$ such that, for every $0<h<h_0$, an effective
eigenvalue branch $\lambda_{j,h}^r(\theta)$ is simple on $Q_0$,
takes values in $[0,E]$, and satisfies
\[
 \inf_{0<h<h_0}\inf_{\theta\in Q_0}
 \operatorname{dist}\left(
   \lambda_{j,h}^r(\theta),
   \sigma(\mathcal A_{\theta,h}^r)
   \setminus\{\lambda_{j,h}^r(\theta)\}
 \right)\geq\delta.
\]
Then, for all sufficiently small $h$, there is a unique physical fibre
eigenvalue $\lambda_j^h(\theta)$ in the $\delta/2$-neighbourhood of
$\lambda_{j,h}^r(\theta)$, and
\[
 \sup_{\theta\in Q_0}
 |\lambda_j^h(\theta)-\lambda_{j,h}^r(\theta)|
 \leq
 \begin{cases}
   C_{E,\delta}h,&r=\mathrm{lat},\\
   C_{E,\delta}h^{2/3},&r=\mathrm{geo}.
 \end{cases}
\]
\end{cor}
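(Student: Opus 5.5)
The plan is to deduce everything from Corollary~\ref{cor:fibre-spectrum} by taking unions over $\theta$, together with standard analytic perturbation theory for the isolated eigenvalue branch. For the band inclusions, let $\lambda\in\mathfrak S_h(E)$. By closedness, $\lambda$ is a limit of points $\lambda_k\in\Sigma_h(\theta_k)\cap[0,E]$. Corollary~\ref{cor:fibre-spectrum} gives $\mu_k\in\Sigma_h^{\mathrm{lat}}(\theta_k)$ with $|\lambda_k-\mu_k|<C_Eh$. Since $\mu_k\ge0$ and $\mu_k<E+C_Eh$, each $\mu_k$ lies in $\mathfrak S^{\mathrm{lat}}_h(E+C_Eh)$. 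Passing to the limit puts $\lambda$ in the closed $C_Eh$-neighbourhood; enlarging $C_E$ slightly turns this into the open neighbourhood. The other three inclusions are identical, using the reverse inclusion in \eqref{eq:lattice-spectral-inclusions} and the two inclusions in \eqref{eq:geo-spectral-inclusions}. The only care needed is that the fibre corollary is applied with energy cut-off $E$ while the output spectrum lies below $E+C_Eh$; the constants $C_E$ can be taken monotone in $E$.

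For gap stability, suppose $(a,b)\Subset(0,E)$ is a gap of $\mathfrak S_h^{\mathrm{lat}}(E')$ for every $E'\ge E$, which is what an open gap of the band spectrum means. Suppose also that some $\lambda\in\mathfrak S_h(E)$ lies in $(a+C_Eh,b-C_Eh)$. By the first inclusion, some $\mu\in\mathfrak S_h^{\mathrm{lat}}(E+C_Eh)$ satisfies $|\mu-\lambda|<C_Eh$, and hence $\mu\in(a,b)$, a contradiction. Since $b<E$, physical spectrum inside the reduced interval automatically lies below $E$, so the reduced interval is free of the whole physical spectrum. The geometric case is the same with $h^{2/3}$.

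For the branch statement, fix $\theta\in Q_0$ and $z=i$, and let $\Gamma_\theta$ be the circle of radius $\delta/2$ about $\lambda^r_{j,h}(\theta)$. Map it by $\mu\mapsto(\mu-i)^{-1}$ and compare the Riesz projections of the physical resolvent and of the lifted graph resolvent $\Phi^r(\mathcal A^r-i)^{-1}(\Phi^r)^*$. Here $\Phi^r$ denotes $\Phi^{\mathrm{lat}}_{\theta,h}$ or $\Phi_{\theta,h}$. The separation $\delta$ bounds the graph resolvent of the inverted operator on the image contour uniformly in $\theta$ and $h$. The norm difference from Theorems~\ref{thm:lattice-fibre} and~\ref{thm:graph-fibre} is $O(h)$, respectively $O(h^{2/3})$. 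Hence the physical inverted resolvent exists on the contour, and the projections differ by less than one. The projections therefore have equal rank one, which gives existence and uniqueness of $\lambda_j^h(\theta)$. The rate then follows from the normal-operator Hausdorff bound already used in Corollary~\ref{cor:fibre-spectrum}: the eigenvalue of the inverted operator inside the contour moves by at most the norm error, and the inverse map is Lipschitz away from zero.

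The main obstacle is uniformity of these contour estimates in $\theta$ and $h$. I would handle it by noting that all constants depend only on $\delta$, $E$ and the uniform constants of the fibre theorems, so no compactness in $\theta$ is actually needed beyond the hypothesis.
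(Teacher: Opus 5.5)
Your argument follows the paper's proof in all essentials. The band inclusions come from taking unions of the fibre inclusions of Corollary~\ref{cor:fibre-spectrum} and then closing, and the gap statement from the same contradiction argument. The branch statement comes from comparing Riesz projections of the compact normal operators $(\mathcal A^\theta_{0,\mathbb I}-i)^{-1}$ and $\Phi^r(\mathcal A^r_{\theta,h}-i)^{-1}(\Phi^r)^*$. The rate then follows from the Hausdorff bound and the Lipschitz continuity of $\zeta\mapsto i+\zeta^{-1}$ away from zero.

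The one step that needs repair is your choice of contour. Nothing in the hypothesis makes $\delta$ small: for the lowest simple-cubic band with $L_h=\kappa^2=1$, the separation from the Dirichlet level $\pi^2$ exceeds $4$. Meanwhile $|\lambda^r_{j,h}(\theta)-i|$ can be as small as $1$. Once $\delta/2\geq|\lambda^r_{j,h}(\theta)-i|$, the circle $\Gamma_\theta$ passes through or encloses $i$. Its image under $\mu\mapsto(\mu-i)^{-1}$ is then a line, or a circle whose bounded interior is the image of the exterior of $\Gamma_\theta$. That interior contains $0$ and the images of all the other eigenvalues, so the associated Riesz projection is not of rank one. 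Moreover, when $\delta/2$ is close to $|\lambda^r_{j,h}(\theta)-i|$ the image contour becomes arbitrarily long, so your contour estimates are not uniform in $\theta$.

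The paper instead uses the fixed circle $|\zeta-(\lambda^r_{j,h}(\theta)-i)^{-1}|=\gamma/3$ in the inverted plane, with $\gamma$ depending only on $E$ and $\delta$. You could equally take radius $\tfrac12\min\{\delta,1\}$ in the $\mu$-plane. The rank-one property on this small contour no longer gives uniqueness in the whole $\delta/2$-neighbourhood by itself, and the paper supplies the missing step. By Corollary~\ref{cor:fibre-spectrum} with energy bound $E+\delta$, every physical eigenvalue within $\delta/2$ of $\lambda^r_{j,h}(\theta)$ lies within $Ch$ (respectively $Ch^{2/3}$) of some effective eigenvalue. That eigenvalue must be $\lambda^r_{j,h}(\theta)$ itself, since all others are at least $\delta$ away. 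Hence its resolvent image lies inside the small contour for small $h$, and the rank-one property gives uniqueness.
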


\begin{proof}
For a spectral point in $[0,E]$, the comparison point supplied by
Corollary~\ref{cor:fibre-spectrum} has energy at most $E+C_Eh$ in the
lattice case and $E+C_Eh^{2/3}$ in the geometric case. Take the union
over $Q'$ and then the closure. After enlarging $C_E$, this gives
the stated inclusions with open neighbourhoods. The gap statements follow
directly from Corollary~\ref{cor:fibre-spectrum}, since a physical spectral
point in the reduced interval would have to lie within the stated error of
the effective spectral set outside $(a,b)$.

For the isolated branch, put $\alpha_{\mathrm{lat}}=1$,
$\alpha_{\mathrm{geo}}=2/3$, and write
$\Psi_{\theta,h}=\Phi^{\mathrm{lat}}_{\theta,h}$ if
$r=\mathrm{lat}$, and $\Psi_{\theta,h}=\Phi_{\theta,h}$ if
$r=\mathrm{geo}$. Consider the compact normal operators
\[
 T_{\theta,h}:=(\mathcal A^\theta_{0,\mathbb I}-i)^{-1},
 \qquad
 S_{\theta,h}:=\Psi_{\theta,h}
 (\mathcal A^r_{\theta,h}-i)^{-1}\Psi_{\theta,h}^*.
\]
Theorems~\ref{thm:lattice-fibre} and~\ref{thm:graph-fibre} give
$\Vert T_{\theta,h}-S_{\theta,h}\Vert\leq Ch^{\alpha_r}$.
The energy bound and the separation hypothesis imply that
$\mu_{\theta,h}:=(\lambda^r_{j,h}(\theta)-i)^{-1}$ is separated
from the rest of $\sigma(S_{\theta,h})$, including zero, by a constant
$\gamma>0$ depending only on $E$ and $\delta$.

On the circle $|\zeta-\mu_{\theta,h}|=\gamma/3$, the resolvent of
$S_{\theta,h}$ is uniformly bounded. For sufficiently small $h$,
a Neumann series gives the same bound for the resolvent of
$T_{\theta,h}$. The resolvent identity and integration around this
circle show that the corresponding Riesz projections differ in norm by
$O(h^{\alpha_r})$. The effective projection has rank one, so the
physical projection also has rank one for sufficiently small $h$.

Apply Corollary~\ref{cor:fibre-spectrum} with energy bound $E+\delta$.
It supplies a physical eigenvalue within $C_{E,\delta}h^{\alpha_r}$
of $\lambda^r_{j,h}(\theta)$. Conversely, every physical eigenvalue
in its $\delta/2$-neighbourhood must be within that error of the same
effective eigenvalue, since all other effective eigenvalues are at least
$\delta$ away. For sufficiently small $h$, the resolvent images of
all these physical eigenvalues lie inside the above circle. The rank-one
property therefore gives uniqueness, and the spectral inclusion gives the
stated error bound. All constants are uniform for $\theta\in Q_0$.
\end{proof}

\subsection{Reconstruction of the global effective operators}

We finally reassemble the fibre estimates.  Let
\[
 U:L^2(X^h)\longrightarrow
 \int_{Q'}^\oplus L^2(X^h_{\mathrm{cell}})\,d\theta
\]
be the Floquet--Gelfand transform on the thin structure.

We first specify the global graph space.  The periodic metric
graph $\mathscr G_h$ has vertices $m\in\mathbb Z^3$.  For every
$e\in\mathscr E_{\mathrm{cell}}$ and $m\in\mathbb Z^3$, it has an
oriented edge $e_m$ of length $l_e$, joining $m$ to $m+n_e$.  We set
\[
 \mathcal H_{g,h}
 :=
 \left(
 \bigoplus_{m\in\mathbb Z^3}
 \bigoplus_{e\in\mathscr E_{\mathrm{cell}}}L^2(0,l_e)
 \right)
 \oplus\ell^2(\mathbb Z^3).
\]
For compactly supported graph data $(u,\beta)$, define
\begin{align*}
 (U_gu)_e(x,\theta)
 :=(2\pi)^{-3/2}e^{-i(\theta\cdot n_e)x/l_e}
   \sum_{m\in\mathbb Z^3}e^{-i\theta\cdot m}u_{e,m}(x),\qquad
 (U_g\beta)(\theta)
 :=(2\pi)^{-3/2}
   \sum_{m\in\mathbb Z^3}e^{-i\theta\cdot m}\beta_m.
\end{align*}
This extends to a unitary map
\[
 U_g:\mathcal H_{g,h}\longrightarrow
 \int_{Q'}^\oplus\mathfrak H_{g,h}\,d\theta.
\]
The affine phase in the edge component converts the endpoint relation between
$m$ and $m+n_e$ into common fibre endpoint values and produces the
magnetic derivative
$d/dx+i(\theta\cdot n_e)/l_e$.  Thus
$\mathcal A^{\mathrm{lat}}_{\theta,h}$ is precisely the Floquet fibre of
the global periodic metric-graph operator.

The fields
$\theta\mapsto\Phi_{\theta,h}$,
$\theta\mapsto W_{\theta,h}$,
$\theta\mapsto\mathcal A^{\mathrm{lat}}_{\theta,h}$, and
$\theta\mapsto\mathcal A^{\mathrm{geo}}_{\theta,h}$ are measurable.  On
each edge, $\Phi_{\theta,h}$ and $W_{\theta,h}$ are given by
multiplication by explicit measurable phase factors.  On the vertex component our phase convention gives
\[
 J_\theta^*1
 =\kappa^{-1}h^{-1}e^{-i\theta\cdot x}
 \quad\hbox{in }X^h_{\mathrm{cell},v},
\]
so this field is measurable as well.  For the graph operators, measurability
follows from the common form domain and the explicit polynomial dependence on
$\theta$.

Define the two global identification operators
 \begin{equation*}
 \Theta_h^{\mathrm{lat}}
 :=U^*\left(\int_{Q'}^\oplus
       \Phi^{\mathrm{lat}}_{\theta,h}\,d\theta\right)U_g,\qquad\quad\Theta_h^{\mathrm{geo}}
 :=U^*\left(\int_{Q'}^\oplus
       \Phi_{\theta,h}\,d\theta\right)U_g,
\end{equation*}
and the global graph operators
\begin{equation*}
\mathcal A_h^{\mathrm{lat}}
 :=U_g^*\left(\int_{Q'}^\oplus
       \mathcal A^{\mathrm{lat}}_{\theta,h}\,d\theta\right)U_g,\qquad\quad
 \mathcal A_h^{\mathrm{geo}}
 :=U_g^*\left(\int_{Q'}^\oplus
       \mathcal A^{\mathrm{geo}}_{\theta,h}\,d\theta\right)U_g.
\end{equation*}
Here $\mathcal A_h^{\mathrm{lat}}$ is the periodic metric-graph
operator, whereas $\mathcal A_h^{\mathrm{geo}}$ is the decomposable
comparison operator described in Subsection~\ref{subsec:spectral-bands}.

\begin{theorem}[Global generalised norm-resolvent estimates]
\label{thm:global-generalised}
There exists $C>0$, independent of $h$ and $z\in K$, such that,
for all sufficiently small $h$,
\begin{align}
 \left\Vert
  (\mathcal A^h-z)^{-1}
  -\Theta_h^{\mathrm{lat}}
   (\mathcal A_h^{\mathrm{lat}}-z)^{-1}
   (\Theta_h^{\mathrm{lat}})^*
 \right\Vert_{L^2(X^h)\to L^2(X^h)}
 &\leq Ch,                                            \label{eq:global-lattice}\\
 \left\Vert
  (\mathcal A^h-z)^{-1}
  -\Theta_h^{\mathrm{geo}}
   (\mathcal A_h^{\mathrm{geo}}-z)^{-1}
   (\Theta_h^{\mathrm{geo}})^*
 \right\Vert_{L^2(X^h)\to L^2(X^h)}
 &\leq Ch^{2/3}.                                      \label{eq:global-geometric}
\end{align}
\end{theorem}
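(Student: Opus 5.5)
The plan is to reduce both global estimates to the fibre estimates of Theorems~\ref{thm:lattice-fibre} and~\ref{thm:graph-fibre}, using the direct-integral decompositions and the unitarity of $U$ and $U_g$. The first step records that, by \eqref{eq:direct_integral} and the identification $\mathcal A^\theta_{0,\mathbb I}=\mathcal A^h_\theta$ from Section~3,
\[
 (\mathcal A^h-z)^{-1}=U^*\Bigl(\int_{Q'}^\oplus(\mathcal A^\theta_{0,\mathbb I}-z)^{-1}\,d\theta\Bigr)U .
\]
By the definition of $\mathcal A_h^{\mathrm{lat}}$ as $U_g^*(\int^\oplus\mathcal A^{\mathrm{lat}}_{\theta,h})U_g$, functional calculus for decomposable self-adjoint operators gives
\[
 (\mathcal A_h^{\mathrm{lat}}-z)^{-1}=U_g^*\Bigl(\int_{Q'}^\oplus(\mathcal A^{\mathrm{lat}}_{\theta,h}-z)^{-1}d\theta\Bigr)U_g ,
\]
and similarly for the geometric operator. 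Self-adjointness of the direct integrals follows from the measurability of the fields established just before the statement together with the fibrewise self-adjointness in Proposition~\ref{prop:selfadjoint-graph-fibres}.

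The second step computes the sandwiched resolvent. Since $U_gU_g^*=I$ on the direct-integral space, and since adjoints of decomposable bounded operators are taken fibrewise, we have
\[
 (\Theta_h^{\mathrm{lat}})^*=U_g^*\Bigl(\int^\oplus(\Phi^{\mathrm{lat}}_{\theta,h})^*d\theta\Bigr)U .
\]
Hence
\[
 \Theta_h^{\mathrm{lat}}(\mathcal A_h^{\mathrm{lat}}-z)^{-1}(\Theta_h^{\mathrm{lat}})^*
 =U^*\Bigl(\int_{Q'}^\oplus\Phi^{\mathrm{lat}}_{\theta,h}(\mathcal A^{\mathrm{lat}}_{\theta,h}-z)^{-1}(\Phi^{\mathrm{lat}}_{\theta,h})^*d\theta\Bigr)U .
\]
Products of measurable bounded operator fields are measurable; here $\Phi^{\mathrm{lat}}_{\theta,h}=\Phi_{\theta,h}W_{\theta,h}$ is a product of measurable fields of isometries and unitaries. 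The difference in \eqref{eq:global-lattice} is therefore $U^*(\int^\oplus T_\theta\,d\theta)U$, where $T_\theta$ is the fibre difference in \eqref{eq:lattice-fibre-estimate}.

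The third step uses the fact that the norm of a decomposable operator is the essential supremum of its fibre norms and that $U$ is unitary. Theorem~\ref{thm:lattice-fibre} then bounds this norm by $Ch$, uniformly in $z\in K$. The same argument with $\Phi_{\theta,h}$, $\mathcal A^{\mathrm{geo}}_{\theta,h}$ and Theorem~\ref{thm:graph-fibre} gives \eqref{eq:global-geometric} with the bound $Ch^{2/3}$.

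The main obstacle is bookkeeping rather than analysis. One must check that the fibre Hilbert spaces $\mathfrak H_{g,h}$ and $L^2(X^h_{\mathrm{cell}})$ are $\theta$-independent, so that the constant-fibre direct integrals make sense. One must also check that the vertex field $J_\theta^*$ is measurable, which follows from the explicit formula $J_\theta^*1=\kappa^{-1}h^{-1}e^{-i\theta\cdot x}$. Finally, the constants in the fibre theorems must be genuinely uniform in $\theta$, which they are by construction. For the geometric family, no Floquet compatibility at the opposite faces of $Q'$ is needed, since $Q'$ is used only as a measure space for the decomposable comparison operator.
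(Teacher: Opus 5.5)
Your proposal is correct and follows the paper's proof. The paper also takes direct integrals of the fibre estimates in Theorems~\ref{thm:lattice-fibre} and~\ref{thm:graph-fibre} and uses that the norm of a decomposable operator is the essential supremum of its fibre norms, with constants uniform in $\theta$; you have spelled out the bookkeeping the paper leaves implicit (the identification $\mathcal A^\theta_{0,\mathbb I}=\mathcal A^h_\theta$, the cancellation $U_gU_g^*=I$ inside the sandwiched resolvent, and measurability of the fibre fields).
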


\begin{proof}
Take direct integrals in Theorems~\ref{thm:lattice-fibre} and~\ref{thm:graph-fibre}.  The norm of a decomposable operator is the essential
supremum of its fibre norms, and all constants are uniform in $\theta$.
\end{proof}

Estimate \eqref{eq:global-lattice} is the global periodic resonant
thin-to-graph theorem.  Estimate \eqref{eq:global-geometric} quantifies the
additional ambient-geometric collapse.  Since the physical and graph
operators act in different Hilbert spaces, both are generalised
norm-resolvent estimates, with the corresponding identification operator
$\Theta_h^{\mathrm{lat}}$ or $\Theta_h^{\mathrm{geo}}$.

\section{A simple-cubic spectral example}
\label{sec:simple-cubic}

Exner and Post \cite[Section~9.7 and Theorem~9.6]{exner2005convergence}
derived the resonant dispersion relation for periodic Cayley graphs and
proved the existence of infinitely many gaps. Their calculation includes the
simple-cubic lattice. We reproduce it in the present notation and determine
all band endpoints and the dependence of the gaps on the vertex mass.

We use the finite-$h$ lattice graph from
Theorem~\ref{thm:lattice-fibre}. For simple-cubic families satisfying
Assumption~\ref{ass:standing-geometry}, the uniform resolvent estimate gives
$O(h)$ control of gap persistence, refining the qualitative conclusion of
\cite[Theorem~9.2]{exner2005convergence}.

Assume that the quotient graph has one vertex and three oriented edges
$e_j$, $j=1,2,3$, with lattice shifts equal to the coordinate vectors of
$\mathbb Z^3$.  Suppose that their centre-line lengths have the common value
$L_h>0$.  Put
\[
Q'=[-\pi,\pi)^3,\qquad
 \theta=(\theta_1,\theta_2,\theta_3)\in Q',
 \qquad
 \kappa^2=|X_v^0|>0.
\]
The simple-cubic fibre
$\mathcal A_{\theta,h}^{\mathrm{cub}}$ is the corresponding special case of
$\mathcal A_{\theta,h}^{\mathrm{lat}}$, acting in
\[
 \mathfrak H_{g,h}^{\mathrm{cub}}
 :=
 \bigoplus_{j=1}^3L^2(0,L_h)\oplus\mathbb C.
\]
With $D_{\theta,j}:=d/dx+i\theta_j/L_h$, the eigenvalue equation for a
vector $(u,\kappa c)$ is
\begin{align}
 -D_{\theta,j}^2u_j&=\lambda u_j,
 &&0<x<L_h,\quad j=1,2,3,                     \label{eq:cubic-edge}\\
 u_j(0)=u_j(L_h)&=c,
 &&j=1,2,3,                                   \label{eq:cubic-continuity}\\
 \sum_{j=1}^3\bigl[-D_{\theta,j}u_j(0)
              +D_{\theta,j}u_j(L_h)\bigr]
 &=\lambda\kappa^2c.                          \label{eq:cubic-balance}
\end{align}

\subsection{Dispersion relation}

Set $\lambda=k^2$, $q=kL_h$, and $\eta_h:=\kappa^2/L_h.$ For $\sin q\neq0$, gauge away the constant magnetic potential by
$v_j(x)=e^{i\theta_jx/L_h}u_j(x)$.  Then
\[
 -v_j''=k^2v_j,\qquad
 v_j(0)=c,\qquad
 v_j(L_h)=e^{i\theta_j}c,
\]
and hence
\[
 v_j(x)
 =
 c\,
 \frac{\sin(k(L_h-x))+e^{i\theta_j}\sin(kx)}
      {\sin(kL_h)}.
\]
A direct differentiation gives
\[
 -v_j'(0)+e^{-i\theta_j}v_j'(L_h)
 =
 \frac{2kc}{\sin(kL_h)}
 \bigl(\cos(kL_h)-\cos\theta_j\bigr).
\]

\begin{prop}[Simple-cubic dispersion relation]
\label{prop:cubic-dispersion}
Let $k>0$ and $\sin(kL_h)\neq0$.  Then $k^2$ is an eigenvalue of
$\mathcal A_{\theta,h}^{\mathrm{cub}}$ if and only if
\begin{equation}
 \sum_{j=1}^3\cos\theta_j
 =
 F_{\eta_h}(q),
 \qquad
 F_{\eta}(q):=
 3\cos q-\frac{\eta q}{2}\sin q.
 \label{eq:cubic-dispersion}
\end{equation}
Every eigenfunction corresponding to such an eigenvalue has a non-zero
vertex coefficient.
\end{prop}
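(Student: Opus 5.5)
The plan is to solve the eigenvalue system \eqref{eq:cubic-edge}--\eqref{eq:cubic-balance} explicitly, using the gauge computation displayed just before the proposition. By Proposition~\ref{prop:selfadjoint-graph-fibres}, an eigenvector $(u,\kappa c)$ of $\mathcal A^{\mathrm{cub}}_{\theta,h}$ with eigenvalue $\lambda=k^2$ is characterised as follows: each $u_j\in H^2(0,L_h)$ satisfies \eqref{eq:cubic-edge}; the endpoint values satisfy \eqref{eq:cubic-continuity}; and the scalar component gives \eqref{eq:cubic-balance}. The scalar equation $\kappa^{-1}\sum_j\mathbf 1^*\partial^{\mathrm{lat}}_{\nu,\theta}u_j=\lambda\kappa c$ is, after multiplication by $\kappa$, exactly \eqref{eq:cubic-balance}.

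\emph{Step 1: reduce the edges.} Gauge each edge by $v_j=e^{i\theta_jx/L_h}u_j$. This turns \eqref{eq:cubic-edge}--\eqref{eq:cubic-continuity} into $-v_j''=k^2v_j$ with $v_j(0)=c$ and $v_j(L_h)=e^{i\theta_j}c$. Since $\sin(kL_h)\neq0$, this Dirichlet problem is uniquely solvable, so $v_j$ is given by the displayed formula. In particular, $c=0$ forces $v_j\equiv0$ for every $j$, and hence $u=0$. This already proves the last assertion: any eigenvector has $c\neq0$.

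\emph{Step 2: rewrite the flux.} Express the magnetic derivatives through $v_j$: $D_{\theta,j}u_j=e^{-i\theta_jx/L_h}v_j'$. Hence $-D_{\theta,j}u_j(0)+D_{\theta,j}u_j(L_h)=-v_j'(0)+e^{-i\theta_j}v_j'(L_h)$. By the displayed differentiation this equals $2kc(\cos q-\cos\theta_j)/\sin q$.

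\emph{Step 3: sum and divide.} Summing over $j$, \eqref{eq:cubic-balance} becomes
\[
 \frac{2kc}{\sin q}\Bigl(3\cos q-\sum_j\cos\theta_j\Bigr)=k^2\kappa^2c.
\]
Divide by $c\neq0$ and multiply by $\sin q/(2k)$, using $k>0$. The right-hand side becomes $\tfrac{k\kappa^2}{2}\sin q=\tfrac{\eta_h q}{2}\sin q$, because $k\kappa^2=q\kappa^2/L_h=q\eta_h$. This gives exactly \eqref{eq:cubic-dispersion}.

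\emph{Step 4: the converse.} If \eqref{eq:cubic-dispersion} holds, take $c=1$, define $v_j$ by the explicit formula, and set $u_j=e^{-i\theta_jx/L_h}v_j$. Then $(u,\kappa)$ satisfies the edge equations, the continuity conditions and the balance condition, by reversing the computation. It lies in the operator domain, being $H^2$ on each edge with common endpoint values $\mathbf 1c$. So it is a nonzero eigenvector, and $k^2$ is an eigenvalue.

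The only delicate point is bookkeeping. One must check the orientation and sign conventions in $\partial^{\mathrm{lat}}_{\nu,\theta}$, and that the factor $e^{-i\theta_j}$ at $x=L_h$ enters as stated. One must also confirm that the normalisation $\kappa^2=|X_v^0|$ yields $\eta_h=\kappa^2/L_h$ with no stray factor from $\kappa^{-1}$ in the scalar component. Everything else is explicit one-dimensional ODE algebra.
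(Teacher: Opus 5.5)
Your proposal is correct and follows the paper's own proof. You gauge each edge, substitute the explicit endpoint derivatives into \eqref{eq:cubic-balance}, divide by $2kc/\sin q$ using $k\kappa^2=q\eta_h$, and use unique solvability of the Dirichlet problem for $\sin q\neq0$ to rule out $c=0$. Your explicit converse with $c=1$ spells out the reverse direction that the paper leaves implicit.
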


\begin{proof}
Substitution of the displayed endpoint derivatives into
\eqref{eq:cubic-balance}, followed by division by $2k c/\sin q$, gives
\eqref{eq:cubic-dispersion}.  If $c=0$ and $\sin q\neq0$, the endpoint
conditions force the edge solution to vanish.
\end{proof}

At $q=0$, the value $\lambda=0$ occurs for $\theta=0$, with
constant edge functions and a non-zero vertex coefficient; this is the left
endpoint of the first dispersive band.

At the Dirichlet levels $q=n\pi$, $n\geq1$, a non-zero vertex
coefficient would require $e^{i\theta_j}=(-1)^n$ for all $j$.  The
left-hand side of the vertex balance would then vanish, whereas its right-hand
side would equal $k^2\kappa^2c$; hence necessarily $c=0$.  The fibre
therefore has the flat-band eigenfunctions
\[
 u_j(x)
 =
 A_j e^{-i\theta_jx/L_h}
 \sin\frac{n\pi x}{L_h},
 \qquad c=0,
\]
where
\begin{equation}
 \sum_{j=1}^3
 A_j\bigl((-1)^ne^{-i\theta_j}-1\bigr)=0.
 \label{eq:cubic-flat-constraint}
\end{equation}
The multiplicity is two for generic $\theta$, and it is three when all
coefficients in \eqref{eq:cubic-flat-constraint} vanish.

\subsection{Exact bands and gaps}

As $\theta$ ranges over $Q'$, the quantity
$\sum_{j=1}^3\cos\theta_j$ fills the interval $[-3,3]$.  Consequently,
the dispersive spectrum is characterised by $|F_{\eta_h}(q)|\leq3.$
For $n\geq0$, let $t_{n,h}\in(0,\pi)$ be the unique solution of
\begin{equation}
 6\cot\frac{t_{n,h}}2
 =
 \eta_h(n\pi+t_{n,h}).
 \label{eq:cubic-tn}
\end{equation}

\begin{theorem}[Exact simple-cubic band-gap decomposition]
\label{thm:cubic-bands}
Let
\[
 \mathcal A_h^{\mathrm{cub}}
 :=
 \int_{Q'}^\oplus
 \mathcal A_{\theta,h}^{\mathrm{cub}}\,d\theta.
\]
Then
\begin{equation}
 \sigma(\mathcal A_h^{\mathrm{cub}})
 =
 \bigcup_{n=0}^{\infty}
 \left[
   \left(\frac{n\pi}{L_h}\right)^2,
   \left(\frac{n\pi+t_{n,h}}{L_h}\right)^2
 \right].
 \label{eq:cubic-spectrum}
\end{equation}
In particular,
\begin{equation}
 \mathcal G_{n,h}
 :=
 \left(
   \left(\frac{n\pi+t_{n,h}}{L_h}\right)^2,
   \left(\frac{(n+1)\pi}{L_h}\right)^2
 \right)
 \label{eq:cubic-gap}
\end{equation}
is a non-empty open gap for every $n\geq0$.  Its right endpoint is the
next Dirichlet flat-band level.
\end{theorem}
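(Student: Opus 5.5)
The plan is to describe the spectrum of the direct integral as the closure of the union of the fibre spectra. Since the family $\theta\mapsto\mathcal A^{\mathrm{cub}}_{\theta,h}$ is analytic of type (B) with compact resolvent, $\sigma(\mathcal A_h^{\mathrm{cub}})=\overline{\bigcup_{\theta}\sigma(\mathcal A^{\mathrm{cub}}_{\theta,h})}$. I would split each fibre spectrum into three parts and treat them separately.

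\emph{Dirichlet levels $q=n\pi$, $n\ge1$.} The flat-band eigenfunctions already exhibited show that $(n\pi/L_h)^2$ lies in every fibre spectrum.

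\emph{The point $q=0$.} Here $\lambda=0$ occurs at $\theta=0$.

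\emph{Generic levels $\sin q\neq0$.} By Proposition~\ref{prop:cubic-dispersion}, these belong to the spectrum exactly when $|F_{\eta_h}(q)|\le3$, because $\sum_j\cos\theta_j$ fills $[-3,3]$. Negative eigenvalues are excluded by non-negativity. So the whole problem reduces to the real-variable description of $\{q\ge0:|F_\eta(q)|\le3\}$ with $\eta=\eta_h>0$.

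I will analyse this on each interval $q\in[n\pi,(n+1)\pi]$. Write $q=n\pi+t$ with $t\in[0,\pi]$. Then
\[
F_\eta=(-1)^n\bigl(3\cos t-\tfrac{\eta}{2}(n\pi+t)\sin t\bigr),
\]
and at $t=0$ we have $|F_\eta|=3$. The key identities are
\[
3-3\cos t+\tfrac{\eta}{2}q\sin t=\sin t\,\bigl(3\tan\tfrac t2+\tfrac{\eta q}{2}\bigr)>0
\]
and
\[
3+3\cos t-\tfrac{\eta}{2}q\sin t=\sin t\,\bigl(3\cot\tfrac t2-\tfrac{\eta q}{2}\bigr).
\]
For $t\in(0,\pi)$ the first shows $(-1)^nF_\eta<3$, so one of the two inequalities always holds strictly. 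The second shows that $(-1)^nF_\eta\ge-3$ holds if and only if $6\cot(t/2)\ge\eta(n\pi+t)$. The left side of this last inequality decreases strictly from $+\infty$ to $0$, while the right side increases strictly and is positive. Hence there is a unique crossing $t_{n,h}\in(0,\pi)$, which is \eqref{eq:cubic-tn}, and the inequality holds exactly for $t\in(0,t_{n,h}]$. (For $n=0$ the right side $\eta t$ still increases from $0$, so the same argument works.)

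Therefore on each period the admissible set of $q$ is $[n\pi,n\pi+t_{n,h}]$, up to the endpoint $t=\pi$. That endpoint is $q=(n+1)\pi$, which is covered by the Dirichlet level. The non-admissible open interval $(n\pi+t_{n,h},(n+1)\pi)$ then gives the gap $\mathcal G_{n,h}$. This gap is non-empty since $t_{n,h}<\pi$, and its right endpoint is the flat-band level $((n+1)\pi/L_h)^2$. Taking the union over $n$ and squaring $k=q/L_h$ gives \eqref{eq:cubic-spectrum}. The resulting set is already closed, so taking the closure adds nothing.

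The main obstacle is the sign bookkeeping of $(-1)^n$ together with the half-angle factorisation. The step to check carefully is that the correct bound, $+3$ or $-3$, becomes active on each period, so that each band really starts at a Dirichlet level and ends at $t_{n,h}$ rather than the reverse.
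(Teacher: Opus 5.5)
Your proposal is correct and follows essentially the paper's route: the same substitution $q=n\pi+t$, the same function $H_{n,h}(t)=3\cos t-\frac{\eta_h(n\pi+t)}{2}\sin t$ with $|F_{\eta_h}|=|H_{n,h}|$, the same crossing equation \eqref{eq:cubic-tn} for the bound $H_{n,h}\geq-3$, and the same use of the Dirichlet levels for the band endpoints. The one difference is the bound $H_{n,h}<3$: you get it directly from $3-H_{n,h}(t)=\sin t\,\bigl(3\tan\frac t2+\frac{\eta_h q}{2}\bigr)>0$, whereas the paper uses a derivative estimate to show that $H_{n,h}$ decreases strictly from $3$ to $-3$ on $[0,t_{n,h}]$; your argument is shorter and suffices for the set identity, while the paper's monotonicity also shows that each band is swept exactly once as $\sum_j\cos\theta_j$ runs over $[-3,3]$.
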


\begin{proof}
Write $q=n\pi+t$, $0\leq t\leq\pi$, and set
\[
 H_{n,h}(t)
 :=
 3\cos t-\frac{\eta_h(n\pi+t)}2\sin t.
\]
Then $F_{\eta_h}(n\pi+t)=(-1)^nH_{n,h}(t)$.  For $0<t<\pi$, one has $H_{n,h}(t)>-3$ if and only if $6\cot(t/2)>\eta_h(n\pi+t).$ The left-hand side is strictly decreasing from $+\infty$ to zero, whereas
the right-hand side is strictly increasing.  Hence
\eqref{eq:cubic-tn} has one solution, and
$H_{n,h}(t)<-3$ for $t_{n,h}<t<\pi$.

It remains to show that $H_{n,h}$ decreases from $3$ to $-3$ on
$[0,t_{n,h}]$.  This holds directly on $(0,\pi/2]$.  If
$\pi/2<t\leq t_{n,h}$, then
$\eta_h(n\pi+t)\leq6\cot(t/2)$, and therefore
\[
 H_{n,h}'(t)
 \leq
 -3\sin t-\frac{\eta_h}{2}\sin t
 +3\cot\frac t2\,|\cos t|<0,
\]
because $\cot(t/2)|\cos t|<\sin t$ for $\pi/2<t<\pi$.  Thus $|F_{\eta_h}(q)|\leq3$ on the half-open interval $[n\pi,(n+1)\pi)$ exactly for
$n\pi\leq q\leq n\pi+t_{n,h}$.  The Dirichlet levels supplied by
\eqref{eq:cubic-flat-constraint} complete the endpoints in
\eqref{eq:cubic-spectrum}.
\end{proof}

In the formal massless limit $\eta_h\downarrow0$, one has
$t_{n,h}\uparrow\pi$, and all gaps close.  Moreover, implicit
differentiation of \eqref{eq:cubic-tn} gives
\[
 \frac{\partial t_{n,h}}{\partial\eta_h}
 =
 -\frac{n\pi+t_{n,h}}
 {3\csc^2(t_{n,h}/2)+\eta_h}<0.
\]
Hence, at fixed edge length, every gap widens monotonically with the
resonant vertex mass.

\FloatBarrier
\begin{figure}[htbp]
 \centering
 \includegraphics[width=0.75\linewidth]{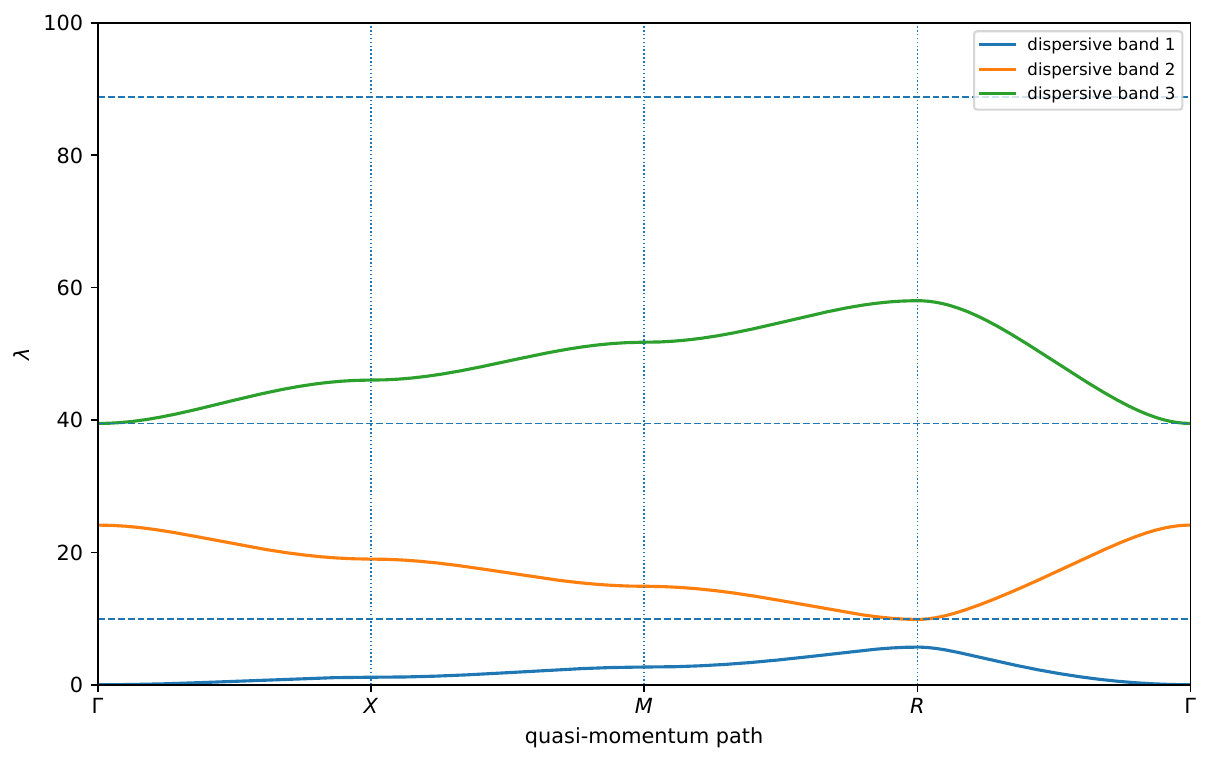}
 \caption{The first three dispersive bands of the simple-cubic lattice graph
 along the path $\Gamma-X-M-R-\Gamma$, where
 $\Gamma=(0,0,0)$, $X=(\pi,0,0)$,
 $M=(\pi,\pi,0)$, and $R=(\pi,\pi,\pi)$, with boundary
points understood modulo $2\pi$ on the Brillouin torus, for
 $L_h=1$ and $\kappa^2=1$.  The dashed horizontal lines are the
 first three Dirichlet flat bands.  The curves are obtained from the exact
 scalar equation \eqref{eq:cubic-dispersion}; no graph discretisation is
 involved.}
 \label{fig:simple-cubic-bands}
\end{figure}
\FloatBarrier

\subsection{Persistence in the thin structure}

Put
\[
 a_{n,h}
 :=
 \left(\frac{n\pi+t_{n,h}}{L_h}\right)^2,
 \qquad
 b_{n,h}
 :=
 \left(\frac{(n+1)\pi}{L_h}\right)^2.
\]

\begin{cor}[Persistence of the simple-cubic gaps]
\label{cor:cubic-gap-persistence}
Fix $E>0$, and let $C_E$ be the constant in
Corollary~\ref{cor:band-gap}.  For all sufficiently small $h$, if
$\overline{\mathcal G_{n,h}}\subset(0,E)$ and
$b_{n,h}-a_{n,h}>2C_Eh$, then
\begin{equation}
 (a_{n,h}+C_Eh,\ b_{n,h}-C_Eh)
 \subset\rho(\mathcal A^h).
 \label{eq:cubic-physical-gap}
\end{equation}
For every fixed $N$, choose $E$ above the limiting right endpoint of the
$N$-th gap.  Then the first $N$ simple-cubic gaps persist in the
physical thin structure for all sufficiently small $h$.
\end{cor}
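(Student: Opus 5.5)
The plan is to combine Theorem~\ref{thm:cubic-bands} with the gap statement of Corollary~\ref{cor:band-gap}, taking $r=\mathrm{lat}$. The simple-cubic fibre $\mathcal A^{\mathrm{cub}}_{\theta,h}$ is by construction the lattice fibre $\mathcal A^{\mathrm{lat}}_{\theta,h}$ for this geometry. Moreover, $\theta\cdot n_{e_j}=\theta_j$ and $l_{e_j}=L_h$, so $\theta^{\mathrm{lat}}_{e_j,h}=\theta_j/L_h$, and the common-value vertex condition coincides with \eqref{eq:cubic-continuity}--\eqref{eq:cubic-balance}. Hence $\mathfrak S^{\mathrm{lat}}_h(E)=\sigma(\mathcal A^{\mathrm{cub}}_h)\cap[0,E]$. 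This uses that the spectrum of a direct integral of continuous families with compact resolvent is the closure of the union of fibre spectra, which is exactly how \eqref{eq:cubic-spectrum} was obtained.

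First, fix $E$ and suppose $\overline{\mathcal G_{n,h}}\subset(0,E)$. By Theorem~\ref{thm:cubic-bands}, $\mathcal G_{n,h}=(a_{n,h},b_{n,h})$ is an open gap of the lattice band spectrum, so $(a_{n,h},b_{n,h})\Subset(0,E)$. If $b_{n,h}-a_{n,h}>2C_Eh$, Corollary~\ref{cor:band-gap} shows that $(a_{n,h}+C_Eh,b_{n,h}-C_Eh)$ contains no point of $\mathfrak S_h(E)$. It remains to pass from $\mathfrak S_h(E)$ to $\sigma(\mathcal A^h)$. The direct-integral decomposition \eqref{eq:direct_integral} gives $\sigma(\mathcal A^h)=\overline{\bigcup_\theta\Sigma_h(\theta)}$. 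The physical family is continuous on the Brillouin torus, so this union is closed on bounded intervals. The interval lies in $[0,E]$, and therefore it lies in $\rho(\mathcal A^h)$, which proves \eqref{eq:cubic-physical-gap}.

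For the second assertion, one needs uniform control of $a_{n,h}$ and $b_{n,h}$ as $h\to0$. By Remark~\ref{rmk:rescaled-vertex-geometry} and \eqref{eq:edge-centreline-geometry}, $L_h=1+O(h^{2/3})$, since $|n_{e_j}|=1$. Hence $\eta_h\to\eta_0:=\kappa^2>0$. The function $t\mapsto 6\cot(t/2)-\eta(n\pi+t)$ is strictly monotone, so the implicit function theorem applied to \eqref{eq:cubic-tn} gives $t_{n,h}\to t_{n,0}\in(0,\pi)$. Consequently $a_{n,h}\to a_{n,0}=(n\pi+t_{n,0})^2$ and $b_{n,h}\to b_{n,0}=((n+1)\pi)^2$, with $b_{n,0}-a_{n,0}>0$, for each $n\le N$. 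Now choose $E>b_{N,0}$, with the corresponding $C_E$. For $h$ small, every $n\le N$ satisfies $\overline{\mathcal G_{n,h}}\subset(0,E)$. The positivity of the left endpoint holds because $a_{0,h}>0$, as $t_{0,h}>0$. Also $b_{n,h}-a_{n,h}\ge\tfrac12\min_{n\le N}(b_{n,0}-a_{n,0})>2C_Eh$. The first part then yields the persistence of each of the first $N$ gaps.

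The main obstacle is the bookkeeping. The constant $C_E$ and the smallness threshold must be fixed before $h$ is sent to zero, which is why $E$ is chosen from the limiting endpoints rather than the $h$-dependent ones. One must also check that the identification $\mathfrak S^{\mathrm{lat}}_h(E)=\sigma(\mathcal A^{\mathrm{cub}}_h)\cap[0,E]$ includes the flat-band endpoints $b_{n,h}$, which are Dirichlet levels. Both points are routine given the continuity of $t_{n,h}$ in $\eta_h$.
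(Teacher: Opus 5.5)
Your proposal is correct and follows the paper's own argument. You apply Corollary~\ref{cor:band-gap} with $r=\mathrm{lat}$ to the explicit gap $\mathcal G_{n,h}$, then use $L_h=1+O(h^{2/3})$, $\eta_h\to\kappa^2$ and the convergence of $t_{n,h}$ to get strictly positive limiting widths; your added bookkeeping (identifying $\mathcal A^{\mathrm{cub}}_{\theta,h}$ with $\mathcal A^{\mathrm{lat}}_{\theta,h}$, fixing $E$ from the limiting endpoints before letting $h\to0$, and checking $a_{0,h}>0$) is sound and only makes explicit what the paper leaves implicit.
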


\begin{proof}
The first assertion is Corollary~\ref{cor:band-gap} applied to the explicit
lattice-graph gap $\mathcal G_{n,h}$.  For the coordinate lattice shifts,
\eqref{eq:edge-centreline-geometry} gives
$L_h=1+O(h^{2/3})$.  Hence
$\eta_h\to\kappa^2$, and the unique solution of
\eqref{eq:cubic-tn} converges to a number $t_n\in(0,\pi)$.  The width
$b_{n,h}-a_{n,h}$ therefore converges, for every fixed $n$, to a strictly
positive limit.  The inequality in the first assertion consequently holds
for each fixed finite collection of gaps once $h$ is sufficiently small.
\end{proof}

\bibliographystyle{plainnat}
\bibliography{ref}
\end{document}